\newcommand{\doi}[1]{\url{https:/doi.org/#1}}
\newcommand{\bibnote}[2]{\nocite{#1}\@namedef{#1chairxnote}{#2}}
\renewcommand{\theta}{\vartheta}
\renewcommand{\epsilon}{\varepsilon}
\renewcommand{\vec}[1]{\boldsymbol{#1}}
\newcommand{\cat}[1]{\mathbf{#1}}
\newcommand{\limit}{\underleftarrow{\mathrm{lim}}\;}
\newcommand{\colimit}{\underrightarrow{\mathrm{lim}}\;}
\newcommand{\lub}{\lor}
\newcommand{\glb}{\land}
\newcommand{\op}{\mathrm{op}}
\renewcommand{\emph}[1]{\textbf{#1}}
\DeclareMathOperator{\Int}{Int}
\DeclareMathOperator{\rk}{rk}
\DeclareMathOperator{\Z}{\mathbb{Z}}
\DeclareMathOperator{\F}{\mathbb{F}}
\DeclareMathOperator{\R}{\mathbb{R}}
\DeclareMathOperator{\im}{im}
\newcommand{\keywords}[1]{\noindent \textbf{Keywords: {#1}}}
\begin{document}

\title{Spatiotemporal Persistence Landscapes}

\author{Martina Flammer\footnote{Institute of Mathematics, Julius-Maximilians-Universit{\"a}t W{\"u}rzburg, Germany} $^{,}$\footnote{Center for Signal Analysis of Complex Systems, Ansbach University of Applied Sciences, Germany}, Knut H{\"u}per$^\ast$}
\date{\texttt{\{martina.flammer,knut.hueper\}@uni-wuerzburg.de}\\~\\
	\today}

\maketitle

\begin{abstract}
A method to apply and visualize persistent homology of time series is proposed. The method captures persistent features in space and time, in contrast to the existing procedures, where one usually chooses one while keeping the other fixed. An extended zigzag module that is built from a time series is defined. This module combines ideas from zigzag persistent homology and multiparameter persistent homology. Persistence landscapes are defined for the case of extended zigzag modules using a recent generalization of the rank invariant (Kim, M\'{e}moli, 2021). This new invariant is called \textit{spatiotemporal persistence landscapes}. Under certain finiteness assumptions, spatiotemporal persistence landscapes are a family of functions that take values in Lebesgue spaces, endowing the space of persistence landscapes with a distance. Stability of this invariant is shown with respect to an adapted interleaving distance for extended zigzag modules. Being an invariant that takes values in a Banach space, spatiotemporal persistence landscapes can be used for statistical analysis as well as for input to machine learning algorithms.
\end{abstract}

\keywords{zigzag persistent homology, persistence landscapes, interleaving distance, spatiotemporal filtration, time series analysis}

\medskip

\section{Introduction}

Topological Data Analysis (TDA) is a branch of applied mathematics that arose in the early 2000s. The aim of TDA is to utilize topological methods to obtain information from high-dimensional and noisy datasets. So far, it has been successfully applied to a broad variety of real world data, for example biological/biomedical applications \cite{biological1, biological2, biological3, piangerelli2018, atienza2021}, financial data \cite{gidea2017, gidea2018}, dynamical systems \cite{venkataraman2016, maletic2016, mittal2017, jaquette2020} and robotics \cite{robotics1, robotics2}, to name just a few. \\
One of the most important methods in TDA is persistent homology, which is based on the algebraic topological concept of homology. The initial idea behind homology was to characterize shapes by their holes. Mathematically, holes are described by a sequence of \textit{homology groups} that contain the information about the $p$-dimensional holes. The generators of zeroth homology group correspond to the connected components of the topological space, the first homology group describes loops and tunnels, the second voids, and the higher homology groups describe higher dimensional holes. Given point cloud data, persistent homology tracks the homology of that data across several spatial scales and in turn visualizes the information about the parameter values of the birth and death of the homological features in the so-called \textit{barcode}. Features that survive over a large number of scales are said to be \textit{persistent} and correspond to actual features of the data, whereas those features that survive only for a short scale are regarded as noise, making persistent homology robust with respect to small perturbations.\\
A different way to visualize persistent homology is via the \textit{persistence landscapes}. Originally, it was defined for oneparameter persistent homology \cite{bubenik2015}, being extended to multiparameter persistent homology \cite{vipond2020}. In case of oneparameter persistence, it contains the same information as the barcode. The main advantage of oneparameter as well as multiparameter persistence landscapes is that it is a stable vectorization and hence, it allows the calculation of means and variances. Additionally, it is well-suited as input for machine learning tasks. All these advantages led to numerous applications in various fields of data analysis \cite{biological1, corcoran2017, gidea2018, stolz_persistent_2017, atienza2021}.\\
A particular type of input data to persistent homology are time series. In many cases where persistent homology is applied to time series, the pipeline is to first window the time series and subsequently, to build a simplicial complex out of the data in the windows, followed by a comparison of the persistent homology in the distinct windows \cite{perea2019, seversky2016, gidea2018, flammer2022, merelli2016}. In a different approach, some authors examine the use of \textit{zigzag} persistent homology \cite{carlsson2010_zigzag} in order to track the persistence of the topological features throughout the time evolution \cite{tymochko2020, corcoran2017}. However, varying the time parameter, the authors had to choose fixed values for the spatial parameter. To overcome this issue, we propose a method to compute persistence landscapes for a novel kind of bifiltration, which we call a \textit{spatiotemporal} bifiltration. We call the corresponding persistence module \textit{extended zigzag module}. Since this type of bifiltration does not fit into the framework of multiparameter persistent homology, we adapt the notions used in the definition of persistence landscapes to our setting by using the \textit{generalized rank invariant} \cite{kim2021} and provide an algorithm to compute the landscapes. In recent work, the generalized rank invariant has already been used to compute the \textit{generalized rank invariant landscape (GRIL)} \cite{gril2023}. This landscape differs from our landscape by being defined for biparameter persistence modules as well as by the choice of intervals used to compute the landscape.\\
Moreover, we show that spatiotemporal persistence landscapes are stable with respect to the interleaving distance for extended zigzag modules, which we define as an extension of the known interleaving distance for zigzag modules \cite{botnan2018}. In addition, spatiotemporal persistence landscapes can be viewed as random variables that take values in a Banach space and as such, obey the law of large numbers and the central limit theorem. \\
In this article, we both review known results that are necessary for a comprehensive understanding and introduce a new method. Hence, the article is structured as follows. In Section \ref{sec:background}, we give a short overview over the background of persistent homology, its application to time series analysis as well as a brief introduction to the \textit{generalized rank invariant}.  In Section \ref{sec:spatiotemp_landscapes}, we define spatiotemporal persistence landscapes and discuss their properties. We define an interleaving distance for extended zigzag modules and proof stability of the spatiotemporal persistence landscapes with respect to this interleaving distance in Section \ref{sec:stability}. The algorithm is described in Section \ref{sec:algorithm}, followed by applications to simulated data in Section \ref{sec:applications}. We finish with a discussion in Section \ref{sec:discussion}.

\section{Preliminaries}\label{sec:background}
\subsection{Persistent homology}
One of the major tasks in TDA is to determine the structure of point cloud data that was sampled on a space. In order to achieve that, a filtration of a simplicial complex $K_0\subset K_1 \subset ... \subset K_n$ (most frequently the Vietoris-Rips complex \cite{edelsbrunner2010}) is constructed from point cloud data and the $p$-th simplicial homology functor $H_p(-)$ with coefficients in a field $\mathbb{F}$ (usually $\mathbb{F}_2$) is applied to the filtration. This results in a sequence of vector spaces $M_0,\ldots,M_n$ and associated linear maps $M(a\leq b):M_a\to M_b$ for all $0\leq a\leq b\leq n$, the so-called (oneparameter) persistence module. The maps $M(a\leq b)$ are also referred to as \textit{structure maps}. Alternatively, persistence modules can be defined as functors from a partially ordered set (poset) to the category of vector spaces over a fixed field. In the case where the poset is $\mathbb{R}$ or $\mathbb{Z}$, or a subset thereof, we call it a oneparameter persistence module. When the indexing poset is $\mathbb{R}^n$ we call it a multiparameter persistence module. When the target category is the category of finite dimensional vector spaces, the persistence module is called \textit{pointwise finite dimensional}. By $\cat{Vec}$ we mean the category of all vector spaces over a fixed field and by $\cat{vec}$ the full subcategory of finite dimensional vector spaces.\\
According to the \textit{structure theorem} \cite{zomorodian2005}, oneparameter persistence modules can be uniquely decomposed into a direct sum of interval modules, which are defined as follows.
\begin{definition} Let $b\leq d$. An \textbf{interval module} $I_{[b,d]}$ with birth time $b$ and death time $d$ is defined as
	\begin{align*}
		I_i=\begin{cases}
			\mathbb{F} & \mbox{for } b\leq i\leq d, \\
			0 & \mbox{otherwise},
		\end{cases}
	\end{align*}
	where all the maps between $I_i$ and $I_{i+1}$ are identity maps if both $I_i$ and $I_{i+1}$ are $\mathbb{F}$ and zero maps otherwise.
\end{definition}  
The multiset of intervals that appear in the direct sum decomposition is a complete discrete invariant of the persistence module \cite{zomorodian2005}, and is called the \textit{barcode} or \textit{persistence diagram}. Contrarily, in the case of multiparameter persistence modules no complete discrete invariant exists \cite{carlsson2009_multi}. As a result, plenty of research has been going on in defining other invariants for multiparameter persistent homology that have some discriminative properties or are suitable to visualize relevant homological features, like the fibered barcode \cite{lesnick2015fibered}, the rank invariant \cite{carlsson2009_multi}, persistence landscapes \cite{vipond2020}, and many more. The rank invariant sends a pair of indices $a\leq b$ to the rank of the linear map $M(a\leq b)$. For oneparameter persistence modules, the rank invariant and the barcode determine each other and hence, the rank invariant is also a complete discrete invariant.

\subsection{Persistence landscapes}
In \cite{bubenik2015}, Bubenik introduced persistence landscapes, a stable representation of barcodes of oneparameter persistence modules by converting a barcode into a function. This leads to a summary that lies in a function space, which makes it possible to apply techniques from statistical analysis and machine learning. This summary has also been adapted to multiparameter persistent modules \cite{vipond2020}. In the following, we recall the definitions for both cases. 
\subsubsection{Oneparameter persistence landscapes}
Let $M$ be oneparameter persistence module, i.e. a functor $M:\mathbb{R}\to \cat{vec}$.
For any indices $a,b\in\mathbb{R}$, we define the \textit{rank function} $\beta_0: \mathbb{R}^2\rightarrow \mathbb{R}$ as follows:
\begin{align*}
	\beta_0 (a,b):=\begin{cases}
		\dim(\im(M(a\leq b)) ) & \mbox{if } a\leq b,\\
		0 & \mbox{otherwise}.
	\end{cases}
\end{align*}
A change of coordinates $m:=\frac{a+b}{2}$ and $h:=\frac{b-a}{2}$ leads to a function that is supported on the upper half plane instead of being supported above the diagonal. By this rescaling, one changes from coordinates that correspond to births and deaths to coordinates that correspond to \textit{midpoints} and \textit{half-lives} of the features. This function is called the \textit{rescaled rank function} $\beta:\mathbb{R}^2\rightarrow\mathbb{R}$:
\begin{align*}
	\beta(x,h):= \begin{cases}
		\dim(\im(M(x-h\leq x+h))) & \mbox{if } h\geq 0, \\
		0 & \mbox{otherwise.}
	\end{cases}
\end{align*}
\begin{definition}(Persistence landscapes \cite{bubenik2015})\\
	Let $M:\mathbb{R}\to\cat{vec}$ be a oneparameter persistence module. The persistence landscape of $M$ is defined as a sequence of functions $\lambda_k:\mathbb{R}\rightarrow\mathbb{R}\cup\{-\infty,\infty\}$ with
	\begin{align*}
		\lambda_k(x):=\sup\{h \geq 0\; : \; \beta(x,h)\geq k \}.
	\end{align*}
\end{definition}
In other words, $\lambda_k(t)$ is the maximal half-length of an interval being centered at $x$ and is contained in at least $k$ intervals of the barcode \cite{vipond2020}.
In Figure \ref{fig_1pLand}, one can see an example of a persistence diagram and the corresponding persistence landscape.
\begin{remark}\label{rem:kmax}
	A simple way to calculate the persistence landscape is given by the observation in \cite{bubenik2015} that for a persistence diagram $\{(b_i,d_i)\}_{i=1}^n$ the landscape can be determined as
	\begin{align*}
		\lambda_k(x)= \mbox{ k-th largest value of} \max\bigl(\min(x-b_i,d_i-x),0\bigr).
	\end{align*}	
\end{remark}
\begin{remark}
	It is known that the barcode and the oneparameter persistence landscape determine each other and hence, the persistence landscape is also a complete invariant. Since zigzag persistent modules also decompose into a direct sum of interval modules one can define barcodes and hence, persistence landscapes also in the case of zigzag persistence. Analogously to the case of oneparameter persistence modules, we obtain a complete invariant.
\end{remark}

\begin{figure}
	\includegraphics[width=0.49\textwidth]{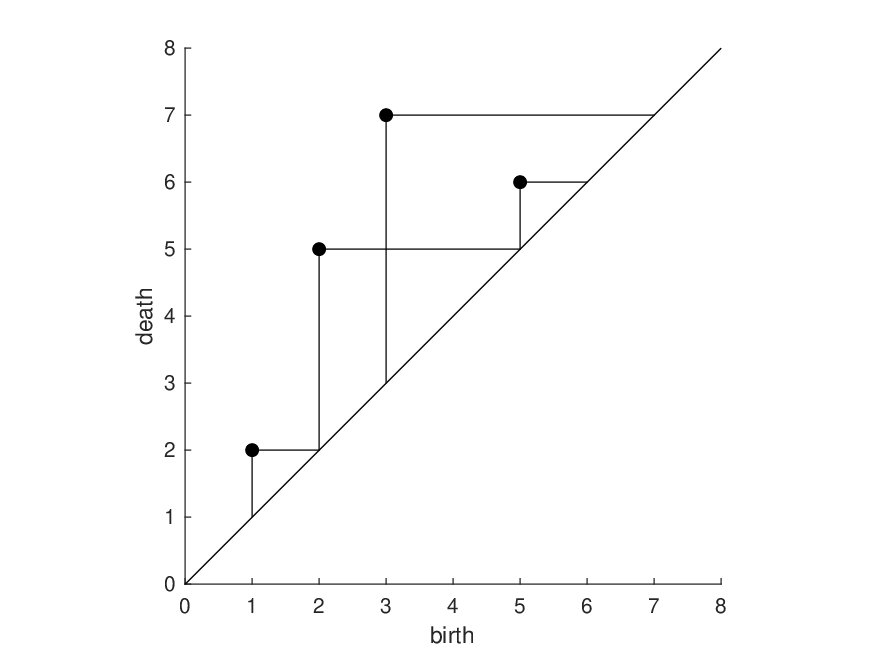}
	\includegraphics[width=0.49\textwidth]{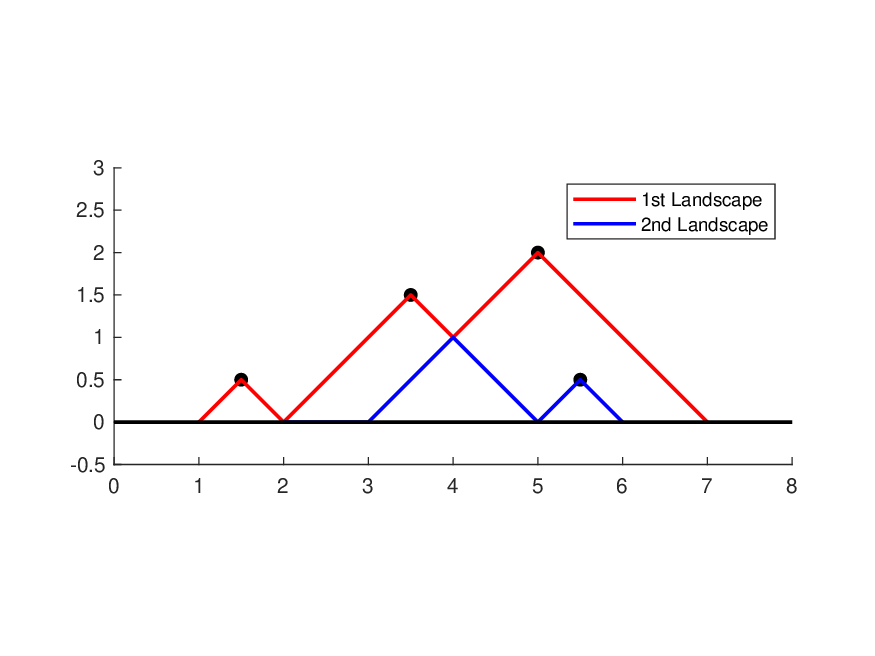}
	\caption{The left hand side shows a persistence diagram (black dots) and the right hand side the corresponding landscapes $\lambda_1$ and $\lambda_2$.}\label{fig_1pLand}
\end{figure}

\subsubsection{Multiparameter persistence landscape}
In \cite{vipond2020}, Vipond generalized the notions of rescaled rank invariant and persistence landscapes to multiparameter persistence modules in a natural way. \\
In the following, let $(\mathbb{R}^n,\leq)$ be the poset defined such that $a\leq b$ if and only if $a_i\leq b_i$ for all $i=1,...,n$.
\begin{definition}
	Let $M$ be a multiparameter persistence module, then the rank function $\beta_0:\mathbb{R}^{2n}\to\mathbb{R}$ of $M$ for $a,b\in\mathbb{R}^n$ is defined as\begin{align*}
		\beta_0(a,b):=\begin{cases}
			\dim(\im(M(a\leq b))) & \mbox{if } a\leq b, \\
			0&\mbox{otherwise.}
		\end{cases}
	\end{align*}
	The rescaled rank function $\beta:\mathbb{R}^{2n}\rightarrow\mathbb{R}$ is defined as
	\begin{align*}
		\beta(x,h):=\begin{cases}
			\dim(\im(M(x-h\leq x+h))) & \mbox{if } h\geq 0,\\
			0&\mbox{otherwise.}
		\end{cases}
	\end{align*}
\end{definition}
\begin{definition}(Multiparameter persistence landscape \cite{vipond2020})
	The multiparameter persistence landscape considers the maximal radius over which $k$ features persist in every (positive) direction through $x$ in the parameter space
	\begin{align*}
		\lambda_k(x):=\sup\{\epsilon\geq 0\; :\; \beta(x,h)\geq k \mbox{ for all } h\geq 0\mbox{ with } \|h\|_\infty\leq \epsilon\}.
	\end{align*}
\end{definition}
Restricting the multiparameter persistence landscape to oneparameter persistence modules gives exactly the definition of a oneparameter persistence landscape. \\
The following lemma from \cite{vipond2020} allows to reduce the computational cost for the calculation of the multiparameter persistence landscape. 
\begin{lemma}\label{lemma:fibered_barcode}
	Let $M$ be a multiparameter persistence module with rank function $\beta_0(\cdot,\cdot)$. Let $1\in\mathbb{R}^n$ be the vector where every entry is $1$. For all $h\geq 0$ we have $\beta_0(x-\|h\|_\infty 1,x+\|h\|_\infty 1)\leq \beta_0(x-h,x+h)$.
\end{lemma}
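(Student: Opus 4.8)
The plan is to realize the structure map $M(x-\|h\|_\infty 1\leq x+\|h\|_\infty 1)$ as a composition of three structure maps, the middle one being exactly $M(x-h\leq x+h)$, and then to invoke the elementary fact that the rank of a composition of linear maps is at most the rank of any single factor. So the lemma should reduce to a factorization argument together with a check of poset inequalities.

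First I would verify the relevant inequalities in the product order on $\mathbb{R}^n$. Since $h\geq 0$, every coordinate satisfies $0\leq h_i\leq \|h\|_\infty$, hence the vector $\|h\|_\infty 1 - h$ has nonnegative entries, and therefore
\[
x-\|h\|_\infty 1 \;\leq\; x-h \;\leq\; x+h \;\leq\; x+\|h\|_\infty 1 .
\]
By functoriality of $M$ the outermost structure map then decomposes as
\[
M(x-\|h\|_\infty 1\leq x+\|h\|_\infty 1)=M(x+h\leq x+\|h\|_\infty 1)\circ M(x-h\leq x+h)\circ M(x-\|h\|_\infty 1\leq x-h).
\]
Writing this as $g\circ f\circ e$ with $f=M(x-h\leq x+h)$, the image of $g\circ f\circ e$ is contained in $g(\im f)$, so $\dim\im(g\circ f\circ e)\leq\dim\im f$. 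Because $h\geq 0$, neither side of the claimed inequality falls into the ``otherwise'' branch of the definition of $\beta_0$, so this last inequality is precisely $\beta_0(x-\|h\|_\infty 1,x+\|h\|_\infty 1)\leq\beta_0(x-h,x+h)$.

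I do not expect a genuine obstacle here: the statement is a direct consequence of functoriality together with subadditivity of rank under composition. The only point that warrants a moment of care is the coordinatewise inequality $\|h\|_\infty 1\geq h$, which is exactly where the hypothesis $h\geq 0$ is used — without it the vector $\|h\|_\infty 1 - h$ need not be nonnegative, the chain of poset relations above would break, and the factorization through $M(x-h\leq x+h)$ would no longer be available.
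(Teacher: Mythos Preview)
Your argument is correct. The paper does not supply its own proof of this lemma; it simply quotes the statement from \cite{vipond2020}. Your factorization
\[
M(x-\|h\|_\infty 1\leq x+\|h\|_\infty 1)=M(x+h\leq x+\|h\|_\infty 1)\circ M(x-h\leq x+h)\circ M(x-\|h\|_\infty 1\leq x-h),
\]
justified via the coordinatewise inequality $0\leq h\leq \|h\|_\infty 1$ and followed by the rank bound $\rank(g\circ f\circ e)\leq\rank f$, is exactly the intended (and essentially only) argument. Your remark that the hypothesis $h\geq 0$ is precisely what guarantees $\|h\|_\infty 1-h\geq 0$ is also to the point.
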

An immediate consequence is that one only needs to compute $\sup \{\epsilon\geq 0 : \beta_0(x-\epsilon 1,x+\epsilon 1)\geq k \}$ in order to get the value of the multiparameter persistence landscape $\lambda_k$ at point $x$. In other words, the barcode in a diagonal direction contains the information about all landscapes of all points lying on that diagonal. \\
Intuitively, regions in the landscape with large values correspond to features which are robust with respect to changes in the filtration parameters. Furthermore, if for large $k$ the landscape is non-zero it indicates that there is a large number of homological features.

\subsection{Persistent homology for time series}
In many cases where persistent homology is applied to time series, the pipeline is to window the time series and subsequently, to build a simplicial complex out of the data in the windows and compare the persistent homology of the different windows \cite{perea2019, seversky2016, gidea2018, flammer2022, merelli2016}. Frequently, an univariate time series is given and \textit{time delay embedding} is used to construct a point-cloud \cite{perea2019, pereira2015,seversky2016}. However, there are also approaches that utilize \textit{zigzag persistent homology} to analyze time series by tracking the homological changes of point cloud throughout the time evolution \cite{tymochko2020, corcoran2017}. We give a short summary of both approaches in the following.

\subsubsection{Time delay embedding}\label{subsubsec:delay_emb}
In real world applications, frequently not the entire information about a dynamical system is known. Instead, only certain quantities can be measured, which is described by a so-called \textit{observation function}, usually an univariate time series $x=(x_1,...,x_n)$. The delay embedding of one observation function $x$ is defined as
\begin{align*}
	Y=\bigl\{y\in\R^d:y=(x_i,...,x_{i+(d-1)\tau})\bigr\}
\end{align*}
with embedding dimension $d$ and delay parameter $\tau$. According to Takens embedding theorem \cite{takens1981}, the delay embedding of the observation function has the same topological structure as the state space of the dynamical system under certain, not very restrictive, assumptions. 

\subsubsection{Zigzag persistent homology for time series}

In \cite{tymochko2020}, the authors examined a different approach by using zigzag persistent homology in order to track the evolution of the topological features over time.  
Following the procedure in \cite{carlsson2010_zigzag}, they constructed a zigzag sequence of simplicial complexes by including the simplicial complexes of two neighboring point clouds $X_i$ and $X_{i+1}$ ($i=1,\ldots,n-1$) into a bigger space $X_i\cup X_{i+1}$ as follows:
\begin{equation*}
	\begin{tikzcd}[column sep=1ex,ampersand replacement=\&]
		X_0 \arrow[rd, hook] \& \&  X_1 \arrow[dl, hook]\arrow[dr, hook] \& \&  X_2 \arrow[dl, hook] \& \cdots \&  X_{n-1}\arrow[dr, hook] \&\& X_n\arrow[dl, hook] \\
		\& X_0\cup X_1 \& \& X_1\cup X_2 \& \& \& \& X_{n-1}\cup X_n \& 
	\end{tikzcd}
\end{equation*}
%\begin{figure}[h!]
%	\centering
%	\includegraphics[width=0.8\textwidth]{zigzag.png}
%\end{figure}
%\\
In this sequence, generators at time steps $i$ and $i+1$ that generate the same feature in $X_i\cup X_{i+1}$ are said to belong to the same feature, but at different time steps. Hence, zigzag persistent homology tracks the persistence with respect to time instead of spatial persistence. Notice that for multi-variate time series, one possible way to construct a zigzag sequence is to partition the time series into windows and build a simplicial complex on that data. However, since the vertex sets of neighboring windows are disjoint, the union is also disjoint. To avoid this, we build the intermediate step by taking the union of the point clouds and building a Vietoris-Rips complex on the union point cloud.\\
Similarly to oneparameter modules, zigzag modules decompose into a direct sum of interval modules and thus, the barcode (resp. persistence diagram) of a zigzag module is a complete invariant. 
To construct the simplicial complexes $X_i$, the authors of \cite{tymochko2020} used Vietoris Rips complexes at specified radii $\epsilon_i$. However, it is a priori not clear how to choose the radii. Our approach is to combine zigzag filtrations with filtrations in a spatial direction and therefore, to regard diagrams of the following form, where the superscript $\epsilon_i$ denotes the scale of the Vietoris-Rips complex.

\begin{equation*}
	\begin{tikzcd}[ampersand replacement=\&]
		X_0^{\epsilon_0} \arrow[d, hook] \arrow[r, hook] \& (X_0 \cup X_1)^{\epsilon_0} \arrow[d, hook] \& \arrow[l, hook] X_1^{\epsilon_0} \arrow[d, hook]\arrow[r, hook] \&(X_1 \cup X_2)^{\epsilon_0} \arrow[d, hook] \& \arrow[l, hook] X_2^{\epsilon_0}\arrow[d, hook]  \arrow[r, hook] \& ...\\
		X_0^{\epsilon_1}\arrow[d, hook] \arrow[r, hook] \& (X_0 \cup X_1)^{\epsilon_1}\arrow[d, hook] \& \arrow[l, hook] X_1^{\epsilon_1}\arrow[d, hook] \arrow[r, hook] \& (X_1 \cup X_2)^{\epsilon_1}\arrow[d, hook] \& \arrow[l, hook] X_2^{\epsilon_1}\arrow[d, hook]  \arrow[r, hook] \& ...  \\
		X_0^{\epsilon_2} \arrow[d, hook] \arrow[r, hook] \& (X_0 \cup X_1)^{\epsilon_2}\arrow[d, hook] \& \arrow[l, hook]X_1^{\epsilon_2}\arrow[d, hook] \arrow[r, hook]  \& (X_1 \cup X_2)^{\epsilon_2}\arrow[d, hook] \& \arrow[l, hook] X_2^{\epsilon_2} \arrow[d, hook] \arrow[r, hook] \& ...  \\
		\vdots\& \vdots \&\vdots \& \vdots \&\vdots\&
	\end{tikzcd}
\end{equation*}
Since the above diagram is not a multiparameter filtration, none of the common invariants can be applied to it. We call the corresponding persistence module an \textit{extended zigzag module}. In this work, we propose an algorithm to calculate persistence landscapes for these modules by using the generalization of the rank invariant introduced in \cite{kim2021}.

\subsection{Generalized Rank}\label{subsec:gen_rank}
For zigzag modules, we require a generalization of the rank invariant that is equivalent to the barcode, because we seek to obtain information about the persistence of features in time. In \cite{puuska2020}, the author defined the rank invariant for multiparameter persistence modules as the map that sends a tuple of points $(a,b)$, where $a<b$, to the rank of the map $M(a<b)$. However, for zigzag modules only adjacent indices are comparable (i.e. $a<b$ or $b<a$). The following example shows that the rank invariant in \cite{puuska2020} does not contain all the information about the interval decomposition of the zigzag module.
\begin{example}
	Consider the two zigzag modules
	\begin{equation*}
		\begin{tikzcd}[ampersand replacement=\&]
			M\; : \&  0  \&  \arrow["0",l,labels=above] \mathbb{F} \arrow["(1\; 0)",r]  \&   \mathbb{F}^2  \&  \arrow["(0\; 1)",l,labels=above]\mathbb{F}\arrow["0",r] \& 0, \\
			N\; : \&  0  \&  \arrow["0",l,labels=above] \mathbb{F} \arrow["(1\; 1)",r]  \&   \mathbb{F}^2  \&  \arrow[l,"(1\; 1)",labels=above]\mathbb{F}\arrow["0",r] \& 0,
		\end{tikzcd}
	\end{equation*}
	indexed by $\{1,2,3,4,5\}$. They both have the same rank invariant, but since $M=\mathbb{I}_{[2,3]}\oplus\mathbb{I}_{[3,4]}$ and $N=\mathbb{I}_{[2,4]}\oplus\mathbb{I}_{[3,3]}$, they are not isomorphic.
\end{example}

In \cite{kim2021}, the authors proposed a generalized rank invariant for modules indexed over arbitrary posets. The rank invariant is defined for so-called intervals, which are defined as follows.
\begin{definition}
	Let $P$ be a poset. We call a nonempty subset $I$ of $P$ an \emph{interval} of $P$ if for all $p,q\in I$ and $p\leq r\leq q$ it holds that $r\in I$ and $I$ is connected, i.e. for all $p,q\in I$ there is a sequence $p=p_1,\ldots,p_l=q$ of elements in $I$ such that $p_i$ and $p_{i+1}$ are comparable ($p_i\leq p_{i+1}$ or $p_i\geq p_{i+1}$ for all $1\leq i\leq l-1$). 
\end{definition}
For a persistence module $M:P\to \cat{Vec}$ we denote by $M|_I$ its restriction to a subset $I$ of $P$. Furthermore, we denote by $\limit M|_I=(L,(\pi_p:L\to M_p)_{p\in I})$ the limit of $M|_I$ and by $\colimit M|_I=(C,(i_p:M_p\to C)_{p\in I})$ the colimit of $M|_I$. See Appendix \ref{app:cat_defis} for the definitions of limits and colimits.
\begin{remark}
	Note that from the definitions of limits and colimits it follows that for every $p\leq q$ in $I$ it holds that $M(p\leq q) \circ \pi_p=\pi_q$ and $i_q\circ M(p\leq q)=i_p$. This implies that $i_p\circ \pi_p = i_q\circ \pi_q$ for all $p,q\in I$. This map $\psi_{M|_I}:=i_p\circ \pi_p $ is called the canonical limit-to-colimit map.
\end{remark}
\begin{definition}\label{defi_genrank}
	The \emph{generalized rank} over an interval $I$ of a persistence module $M$ is defined as $\rank(M|_I):= \rank(\psi_{M|_I})$, where $\psi_{M|_I}$ is the canonical limit-to-colimit map $i_p\circ \pi_p$ for any $p\in I$.
\end{definition}
\begin{remark}
	For interval decomposable modules, the generalized rank of $M$ over an interval $I$ equals the number of intervals in the direct sum decomposition of $M$ that contain $I$ and hence, it is a complete invariant \cite{kim2021}. 
\end{remark}
\begin{definition}\label{def_genRI}
	Let $\Int(P)$ be the set of intervals of the poset $P$. The \emph{generalized rank invariant} is the map
	\begin{align}
		\rk : \Int(P)\to \mathbb{N}_0, \quad I \mapsto \rank(M|_I)=\rank(\psi_{M|_I}).
	\end{align} 
\end{definition}
\begin{remark}
	When applied to oneparameter persistence modules, the generalized rank invariant coincides with the standard rank invariant. Furthermore, for zigzag modules the generalized rank over an interval $I$ counts the number of intervals $J$ in the direct sum decomposition of the zigzag module containing $I$. In total, the barcode and the generalized rank invariant of the zigzag module contain the same information.
\end{remark}
As the original rank invariant, the generalized rank invariant is order-reversing.
\begin{lemma}\label{lemma_orderReverse}
	For two intervals $I\subset J$ of $P$, it holds that $\rk(I)\geq \rk(J)$.
\end{lemma}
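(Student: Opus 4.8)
The plan is to use the functoriality of limits and colimits under restriction to a subinterval, together with the elementary fact that the rank of a linear map does not increase under pre- or post-composition. Write $\limit M|_I=(L_I,(\pi^I_p)_{p\in I})$ and $\colimit M|_I=(C_I,(i^I_p)_{p\in I})$, and likewise $\limit M|_J=(L_J,(\pi^J_p)_{p\in J})$ and $\colimit M|_J=(C_J,(i^J_p)_{p\in J})$. Since intervals are nonempty and connected, the canonical limit-to-colimit maps $\psi_{M|_I}=i^I_p\circ\pi^I_p$ and $\psi_{M|_J}=i^J_p\circ\pi^J_p$ are well-defined and independent of the chosen $p$ in the respective interval.

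First I would produce an induced map between the limits. Since $I\subseteq J$, restricting the legs of the limiting cone $(L_J,(\pi^J_p)_{p\in J})$ to the indices $p\in I$ yields a cone over $M|_I$; by the universal property of $L_I$ there is a unique linear map $f\colon L_J\to L_I$ with $\pi^I_p\circ f=\pi^J_p$ for every $p\in I$. Dually, restricting the legs of the colimiting cocone $(C_J,(i^J_p)_{p\in J})$ to $p\in I$ yields a cocone under $M|_I$, so the universal property of $C_I$ gives a unique linear map $g\colon C_I\to C_J$ with $g\circ i^I_p=i^J_p$ for every $p\in I$.

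Next I would verify the factorization $\psi_{M|_J}=g\circ\psi_{M|_I}\circ f\colon L_J\to C_J$. Fixing any $p\in I$ and using the defining identities of $f$ and $g$, one computes $g\circ\psi_{M|_I}\circ f=g\circ i^I_p\circ\pi^I_p\circ f=i^J_p\circ\pi^J_p=\psi_{M|_J}$. Finally, since the rank of a linear map cannot increase under composition with arbitrary linear maps on either side, $\rank(\psi_{M|_J})=\rank(g\circ\psi_{M|_I}\circ f)\le\rank(\psi_{M|_I})$, that is, $\rk(J)\le\rk(I)$.

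The argument involves no genuine obstacle. The only points requiring care are getting the directions of the induced maps right — $f$ goes from the larger limit $L_J$ to the smaller limit $L_I$, while $g$ goes from $C_I$ to $C_J$ — and confirming that the limit-to-colimit maps used here are well-defined, which is precisely what the connectedness clause in the definition of an interval guarantees.
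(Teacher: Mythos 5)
Your proof is correct and follows essentially the same route as the paper's: restrict the (co)limit cone/cocone of $M|_J$ to $I$, invoke the universal properties to obtain induced maps on limits and colimits, factor $\psi_{M|_J}$ through $\psi_{M|_I}$, and conclude from monotonicity of rank under composition. The only difference is notational (your $f,g$ are the paper's $g,f$), so there is nothing further to add.
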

\begin{proof}
	By construction, the limit of $M|_J$ is also a cone of $M|_I$ and the colimit of $M|_J$ is a cocone of $M|_I$. Hence, by the universal properties of limits and colimits there exist unique morphisms $g:\limit M|_J \to \limit M|_I$ and $f:\colimit M|_I\to\colimit M|_J$ such that for all $a\in I$, all $z\in M_a$ and all $y\in\limit M|_J$ it holds $\pi_a^J(y)=\pi_a^I\circ g(y)$ and $i_a^J(z)=f\circ i_a^I(z)$. Hence, for the canonical limit-to-colimit-maps $\psi_{M|_I}$ and $\psi_{M|_J}$ it holds
	\begin{align*}
		\psi_{M|_J}(y)=i_a^J\circ \pi_a^J(y)=f\circ i_a^I\circ \pi_a^I\circ g(y)=f\circ \psi_{M|_I}\circ g(y)
	\end{align*} 
	and hence, $\psi_{M|_J}$ factors through $\psi_{M|_I}$ and so $\rank(\psi_{M|_I})\geq \rank(\psi_{M|_J})$. 
\end{proof}
The following proposition shows how the limit and colimit can actually be constructed for diagrams valued in $\cat{Vec}$. For this, we first need to establish the following notation: for $p,q\in P$, $v_p\in M_p$ and $v_q\in M_q$ we write $v_p\sim v_q$ if $p$ and $q$ are comparable and either $M(p\leq q)(v_p)=v_q$ or $M(q\leq p)(v_q)=v_p$, whichever case is applicable. The following proposition appears among others in \cite{dey2022}. 
\begin{proposition} \label{prop_limColimInVec} Let $M:P\to\cat{Vec}$ be a persistence module. Then: \begin{itemize}
		\item[(i)]  The limit of $M$ is (isomorphic to) the pair $(L,(\pi_p)_{p\in P})$ where
		\begin{align}
			L:=\left\{  (v_p)_{p\in P} \in\prod\limits_{p\in P} M_p : \forall p\leq q \in P, v_p\sim v_q  \right\}
		\end{align}
		and for each $p\in P$, $\pi_p:L\to M_p$ is the canonical projection. We call an element of $L$ a section of $M$.
		\item[(ii)]The colimit of $M$ is (isomorphic to) the pair $(C,(i_p)_{p\in P})$ described as follows: for $p\in P$ let the map $j_p:M_p\hookrightarrow \oplus_{p\in P}M_p$ be the canonical injection. $C$ is the quotient vector space $\left( \oplus_{p\in P}M_p \right)/ T$, where $T$ is the subspace of $\oplus_{p\in P}M_p$ which is generated by vectors of the form $j_p(v_p)-j_q(v_q)$ for $v_p\sim v_q$ and the maps $i_p:M_p\to C$ are the compositions $\rho\circ j_p$, where $\rho$ is the quotient map $\rho:\oplus_{p\in P}M_p\to C$.
	\end{itemize}
\end{proposition}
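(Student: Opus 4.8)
The plan is to verify the two universal properties directly. Recall that a functor $M:P\to\cat{Vec}$ is the same thing as a coherent family of vector spaces $(M_p)_{p\in P}$ together with structure maps $M(p\leq q)$, that limits in $\cat{Vec}$ are computed as subspaces of products, and that colimits in $\cat{Vec}$ are computed as quotients of direct sums; the proposition merely makes these two computations explicit, so the proof is a routine unwinding of definitions.

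For part (i), I would first check that $(L,(\pi_p)_{p\in P})$ is a cone over $M$. If $p\leq q$ and $(v_r)_{r\in P}\in L$, then the defining condition $v_p\sim v_q$ is in this case exactly the equation $M(p\leq q)(v_p)=v_q$, i.e. $M(p\leq q)\circ\pi_p=\pi_q$. Given any other cone $(N,(f_p:N\to M_p)_{p\in P})$, I would set $u(n):=(f_p(n))_{p\in P}\in\prod_{p\in P}M_p$; the cone equations $M(p\leq q)\circ f_p=f_q$ are precisely the conditions guaranteeing $u(n)\in L$, so $u$ corestricts to a map $N\to L$ with $\pi_p\circ u=f_p$ for all $p$, and it is the unique such map because the projections $(\pi_p)_{p\in P}$ are jointly monic on $L\subseteq\prod_{p\in P}M_p$.

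For part (ii), dually, I would first check that $(C,(i_p)_{p\in P})$ is a cocone: for $p\leq q$ and $v_p\in M_p$ we have $v_p\sim M(p\leq q)(v_p)$, hence $j_p(v_p)-j_q(M(p\leq q)(v_p))\in T$, and therefore $i_p(v_p)=\rho(j_p(v_p))=\rho(j_q(M(p\leq q)(v_p)))=i_q(M(p\leq q)(v_p))$, i.e. $i_q\circ M(p\leq q)=i_p$. Given any other cocone $(N,(g_p:M_p\to N)_{p\in P})$, the universal property of the direct sum produces a unique $\tilde g:\oplus_{p\in P}M_p\to N$ with $\tilde g\circ j_p=g_p$; to see that it descends to $C=(\oplus_{p\in P}M_p)/T$ it suffices to check that $\tilde g$ vanishes on each generator $j_p(v_p)-j_q(v_q)$ of $T$, and indeed, if $v_p\sim v_q$, say with $p\leq q$ and $M(p\leq q)(v_p)=v_q$, then the cocone identity $g_q\circ M(p\leq q)=g_p$ gives $\tilde g(j_p(v_p)-j_q(v_q))=g_p(v_p)-g_q(M(p\leq q)(v_p))=0$. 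Hence $\tilde g$ factors as $h\circ\rho$ for a unique $h:C\to N$, which satisfies $h\circ i_p=g_p$ and is the unique such map because $\rho$ is surjective and the images of the $j_p$ span $\oplus_{p\in P}M_p$, so the family $(i_p)_{p\in P}$ is jointly epic.

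I do not expect a serious obstacle: the statement is a standard fact and the argument is a direct verification of universal properties. The only point that requires care is the bookkeeping around the symmetric relation $\sim$ — one has to notice that for a comparable pair $p,q$ exactly one of the two defining equations of $\sim$ applies (the one associated to the order relation that actually holds), so that the description of $L$ reduces to the familiar "coherent family" formula for a limit over a poset and $T$ is generated by exactly the relations $j_p(v_p)-j_q(M(p\leq q)(v_p))$ for $p\leq q$ and nothing more. It is worth remarking, though not needed for the proof, that the relations arising from composites $p\leq q\leq r$ are automatic consequences by functoriality, so $T$ could equivalently be generated using only covering pairs.
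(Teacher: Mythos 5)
Your verification is correct, and it is the standard argument: check that $(L,(\pi_p))$ is a cone and $(C,(i_p))$ is a cocone, then obtain the mediating maps from the universal properties of the product/subspace and of the direct sum/quotient, with uniqueness coming from the projections being jointly monic and from surjectivity of $\rho$. Note, however, that the paper does not prove this proposition at all: it is stated as a known fact and attributed to Dey, Kim and M\'{e}moli \cite{dey2022}, so your write-up supplies the omitted routine proof rather than paralleling or diverging from an argument in the text. One small caution about your closing aside: the claim that $T$ could be generated using only covering pairs is not true for an arbitrary poset $P$ (a dense poset such as $\mathbb{Q}$ or $\mathbb{R}$ has no covering pairs at all, yet the relations for $p\leq q$ are nontrivial); it does hold for the discrete, locally finite posets actually used in this paper, and since you flag the remark as not needed for the proof, it does not affect the validity of your argument.
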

\begin{definition}
	A \emph{path} in $P$ between $p\in P$ and $q\in P$ is a sequence of elements $p=p_1,\ldots,p_l=q$ in $P$ such that for all $1\leq i\leq l-1$ the elements $p_i$ and $p_{i+1}$ are comparable. A \emph{section along the path} $\Gamma$ is an $l$-tuple $v\in\oplus_{i=1}^l M_{p_i}$ with the property that $v_{p_i}\sim v_{p_{i+1}}$ for all $i=1,\ldots,l-1$. 
\end{definition}
The following proposition will be utilized in the proof of Theorem \ref{theo_main}. We use the same notation as in Proposition \ref{prop_limColimInVec}.
\begin{proposition}\label{prop_path} \cite{dey2022} For $p,q\in P$ and vectors $v_p\in M_p$ and $v_q\in M_q$, it holds that $[j_p(v_p)]=[j_q(v_q)]\in\colimit M$ if there exists a path $\Gamma$ between $p$ and $q$ and a section $(w_p)_{p\in \Gamma}$ of M along $\Gamma$ such that $w_p=v_p$ and $w_q=v_q$. 
\end{proposition}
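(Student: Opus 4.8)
The plan is to unwind the description of the colimit given in Proposition \ref{prop_limColimInVec}(ii) and to exhibit $j_p(v_p)-j_q(v_q)$ explicitly as an element of the subspace $T\subseteq\oplus_{p\in P}M_p$, which is precisely the assertion that $[j_p(v_p)]=[j_q(v_q)]$ in $\colimit M$. Write the given path as $\Gamma:\ p=p_1,p_2,\ldots,p_l=q$ and let $(w_{p_i})_{i=1}^{l}$ be the section of $M$ along $\Gamma$ with $w_{p_1}=v_p$ and $w_{p_l}=v_q$.

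First I would record that, by the very definition of a section along a path, $w_{p_i}\sim w_{p_{i+1}}$ for every $1\le i\le l-1$; hence each difference $j_{p_i}(w_{p_i})-j_{p_{i+1}}(w_{p_{i+1}})$ is one of the generators of $T$ listed in Proposition \ref{prop_limColimInVec}(ii), and in particular lies in $T$. Summing these over $i=1,\ldots,l-1$, the sum telescopes:
\[
\sum_{i=1}^{l-1}\bigl(j_{p_i}(w_{p_i})-j_{p_{i+1}}(w_{p_{i+1}})\bigr)=j_{p_1}(w_{p_1})-j_{p_l}(w_{p_l})=j_p(v_p)-j_q(v_q).
\]
The left-hand side is a finite sum of elements of $T$, so $j_p(v_p)-j_q(v_q)\in T$, and passing to the quotient $C=\bigl(\oplus_{p\in P}M_p\bigr)/T$ gives $[j_p(v_p)]=[j_q(v_q)]$, equivalently $i_p(v_p)=i_q(v_q)$ after composing with the quotient map $\rho$. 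The degenerate case $p=q$ with the trivial path $l=1$ is immediate, since then $v_p=w_{p_1}=v_q$.

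I do not expect a genuine obstacle here: once the definition of $T$ is in hand the argument is a one-line telescoping identity. The only points needing a moment's care are bookkeeping ones, namely that the relation $\sim$ used to define a section along a path is literally the same relation used to generate $T$ (it is, by the conventions fixed just before Proposition \ref{prop_limColimInVec}), and that a path may repeat vertices, which is harmless because the telescoping cancellation is insensitive to repetitions. No pointwise-finiteness or local-finiteness hypothesis on $M$ or $P$ is needed, since $T$ is by definition the span of its generators and only a finite sub-sum is ever invoked.
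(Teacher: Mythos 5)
Your argument is correct and is exactly the route the paper takes: the paper's proof simply states that the claim follows directly from the explicit colimit formula in Proposition \ref{prop_limColimInVec}, and your telescoping sum of generators of $T$ is the natural unwinding of that statement.
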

\begin{proof}
	It follows directly from the explicit formula for the colimit in Proposition \ref{prop_limColimInVec}.
\end{proof}
To keep notations simple, we write $[v_p]$ instead of $[j_p(v_p)]$ for elements in the colimit.
\begin{remark}
	Note that the converse of Proposition \ref{prop_path} does not hold in general. Consider for example the module
	\begin{equation*}
		\begin{tikzcd}[ampersand replacement=\&]
			\F_2 \arrow[r,"{0}\choose{1}"]\arrow[d, "\id"] \& \F_2^2  \\
			\F_2 \& \arrow[l, "\mathrm{id}"] \F_2 \arrow[u,"{1}\choose{0}"] 
		\end{tikzcd}.
	\end{equation*}
	Passing to the colimit, in the upper right vector space $\left[{{1}\choose{0}}\right] =\left[{{0}\choose{1}} \right]$ holds, because there exists a section along the closed path that identifies the two vectors, and hence $\left[{{1}\choose{1}}\right] =\left[{{0}\choose{0}} \right]$. On the other hand, the vector ${1}\choose{1}$$ \in \F_2^2$ in the upper right vector space does not lie in the image of any of the arrows and thus, there exists no section along any path such that the above holds. 
\end{remark}

The following definition of lower and upper fences will be useful in several places in this article. In particular, these are subsets that carry the important property that they already completely determine the limits and colimits of a diagram.
\begin{definition}
	A subposet $L\subset P$ is called a lower fence of $P$ if $L$ is connected and for any $q\in P$ the intersection $L\cap \{r\in P: r\leq q\}$ is nonempty and connected.\\
	Analogously, a subposet $U\subset P$ is called an upper fence of $P$ if $U$ is connected and for any $q\in P$ the intersection $U\cap \{r\in P:q\leq r\}$ is nonempty and connected.
\end{definition}
The following Proposition already appears among others in \cite{dey2022} and will be used in the proof of the main theorem of this Section.
\begin{proposition}\label{prop_iso_limcolim}
	Let $L$ and $U$ be lower and upper fences, respectively. Given any $P$-indexed persistence module $M$, we have that $\limit M\cong \limit M|_L $ and $\colimit M\cong \colimit M|_U$. 
\end{proposition}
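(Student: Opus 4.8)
The plan is to prove the two isomorphisms separately; by a straightforward duality (reversing all arrows in $P$, which swaps limits with colimits and lower fences with upper fences), it suffices to establish $\colimit M \cong \colimit M|_U$ for an upper fence $U$, and the limit statement follows dually. So first I would fix an upper fence $U \subset P$ and produce a canonical map $\phi: \colimit M|_U \to \colimit M$ and a candidate inverse, then check both composites are the identity.

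To build the map into $\colimit M|_U$, the key observation is that for \emph{every} $q \in P$ the set $U \cap \{r : q \le r\}$ is nonempty, so we may pick some $u(q) \in U$ with $q \le u(q)$; then for $v_q \in M_q$ the assignment $v_q \mapsto [M(q \le u(q))(v_q)] \in \colimit M|_U$ is the natural thing to try. First I would check this is well-defined, i.e. independent of the choice of $u(q)$: if $u, u'$ are two elements of $U$ above $q$, connectedness of $U \cap \{r : q \le r\}$ gives a path inside $U$ from $u$ to $u'$, and pushing $M(q\le u)(v_q)$ along that path (using functoriality at each comparable step and the fact that $q$ sits below every vertex of the path, so the relevant structure maps agree) yields a section along the path in $U$ connecting $M(q\le u)(v_q)$ to $M(q\le u')(v_q)$; by Proposition \ref{prop_path} these have the same class in $\colimit M|_U$. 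Next I would verify that these maps $M_q \to \colimit M|_U$ are compatible with the structure maps $M(p \le q)$ — again choosing a common upper bound in $U$ for $p$ and $q$ and using functoriality — so that they assemble, by the universal property of $\colimit M$, into a well-defined linear map $\phi: \colimit M \to \colimit M|_U$.

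The reverse map $\iota: \colimit M|_U \to \colimit M$ is the one induced by the inclusion $U \hookrightarrow P$: the cocone $(i_p: M_p \to \colimit M)_{p \in U}$ restricted to $U$ factors uniquely through $\colimit M|_U$. Then I would check $\iota \circ \phi = \mathrm{id}$ and $\phi \circ \iota = \mathrm{id}$ on generators. On a class $[v_p] \in \colimit M$ with $p \in P$, $\phi$ sends it to $[M(p \le u(p))(v_p)]_U$ and then $\iota$ sends that to $[M(p \le u(p))(v_p)] = [v_p] \in \colimit M$, using the colimit relation $i_{u(p)} \circ M(p \le u(p)) = i_p$; so $\iota \circ \phi = \mathrm{id}$. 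For the other composite, on $[v_u]_U$ with $u \in U$, $\iota$ gives $[v_u] \in \colimit M$ and then $\phi$ gives $[M(u \le u(u))(v_u)]_U$; choosing $u(u) = u$ (legitimate since $u \in U \cap \{r: u \le r\}$) makes this $[v_u]_U$ again, so $\phi \circ \iota = \mathrm{id}$.

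I expect the main obstacle to be the well-definedness check for $\phi$ — specifically, carefully verifying that the path inside the connected set $U \cap \{r : q \le r\}$ genuinely produces a \emph{section} of $M|_U$ connecting the two images, which requires that at each comparable pair $p_i, p_{i+1}$ on the path the structure map applied to the appropriate image of $v_q$ lands on the next term; this works because $q$ is a common lower bound of $p_i$ and $p_{i+1}$ and functoriality forces $M(q \le p_{i+1})(v_q)$ and the push of $M(q \le p_i)(v_q)$ along $M(p_i \le p_{i+1})$ (or its reverse) to coincide, but writing this cleanly in all the sub-cases of which of $p_i \le p_{i+1}$ or $p_i \ge p_{i+1}$ holds is the fiddly part. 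A secondary point worth a remark is that Proposition \ref{prop_path} only gives one implication, so I should make sure every equality of colimit classes I invoke is of the ``there exists a section along a path'' form, never the converse.
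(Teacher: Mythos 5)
Your colimit half is correct and is essentially the paper's own construction: the map you call $\iota$ is exactly the isomorphism $h:\colimit M|_U\to\colimit M$, $[v_p]\mapsto[v_p]$, that the paper records after the proposition, and your well-definedness check for $\phi$ (push $v_q$ to two elements of $U$ above $q$, connect them by a path inside the connected set $U\cap\{r:q\le r\}$, observe that the pushed values form a section of $M|_U$ along that path, and apply Proposition \ref{prop_path}) is precisely the mechanism the paper invokes; you are also right to insist on using only the forward implication of Proposition \ref{prop_path}. Producing the explicit inverse $\phi$ and checking both composites on generators is fine; alternatively both identities follow from the uniqueness clause of the universal property, since $\iota\circ\phi\circ i_p=i_p$ for all $p\in P$ and $\phi\circ\iota\circ i^U_u=i^U_u$ for all $u\in U$.

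The step I would not accept as written is the opening duality reduction. Reversing the arrows of $P$ alone does not turn $M$ into a diagram whose colimit is $\limit M$: a functor on $P^{\op}$ would need structure maps $M_q\to M_p$ for $p\le q$, which $M$ does not supply, so the formal dual of your colimit statement concerns modules valued in $\cat{Vec}^{\op}$, not in $\cat{Vec}$. Your colimit argument is not purely arrow-theoretic — it relies on Proposition \ref{prop_path} and on the explicit quotient description of colimits in $\cat{Vec}$ from Proposition \ref{prop_limColimInVec} — and these ingredients have no automatic counterpart after dualizing the target category (nor can you fall back on linear duality $V\mapsto V^{*}$, since the proposition is stated for arbitrary, possibly pointwise infinite-dimensional, $\cat{Vec}$-valued modules). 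So the limit half does not come for free; it needs its own (easy) argument, which is exactly what the paper supplies: using the description of $\limit M$ as the space of sections, the canonical section extension $e:\limit M|_L\to\limit M$ sends $(v_p)_{p\in L}$ to $(w_q)_{q\in P}$ with $w_q:=M(p\le q)(v_p)$ for any $p\in L\cap\{r\in P:r\le q\}$; this is well defined because that set is nonempty and connected (push the section along a path inside it, just as in your colimit check, but no analogue of Proposition \ref{prop_path} is needed since sections are genuine tuples rather than equivalence classes), and it is inverse to the canonical restriction. With that replacement for the duality claim, your proof is complete and coincides with the paper's isomorphisms $e$ and $h$.
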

The isomorphisms in Proposition \ref{prop_iso_limcolim} are given by the canonical section extension map 
\begin{align}
	e: \limit M|_L\to \limit M, \quad (v_p)_{p\in L}\mapsto (w_q)_{q\in P},
\end{align}
where for all $q\in P$, $w_q$ is defined as $M(p\leq q )(v_p)$ for any $p\in L\cap\{r\in P: r\leq q\}$ and by the map
\begin{align}
	h:\colimit M|_U\to \colimit M,\quad [v_p]\mapsto [v_p] \quad\forall p\in U,\; v_p\in M_p .
\end{align}
Note that the canonical section extension map $e$ is well-defined since $L\cap\{r\in P: r\leq q\}$ is a connected set. The inverse $r=e^{-1}$ is the canonical section restriction. Furthermore, the other isomorphism $h$ is well-defined because of Proposition \ref{prop_path}. 
Keeping the maps that we defined in the proof of the last Proposition, we set $\xi=h^{-1}\circ \psi_M\circ e$, i.e. we obtain the commutative diagram
\begin{equation}
	\begin{tikzcd}[ampersand replacement=\&]
		\limit M|_L\arrow[r, "\xi"]\arrow[d,"e"] \& \colimit M|_U\arrow[d,"h"]\\
		\limit M  \arrow[r,"\psi_M"] \& \colimit M.
	\end{tikzcd}\label{commDiag1}
\end{equation}
Due to the fact that $e$ and $h$ are isomorphisms we have $\rank \; \xi = \rank\; \psi_M$. In total, since the limit of a diagram is isomorphic to the limit of the diagram restricted to a lower fence and analogously for the colimit and a upper fence, we can compute the generalized rank of a diagram by only calculating limits and colimits of lower and upper fences and taking the canonical map between them. \\
\\
For an interval $I$, denote by $\min (I)$ and $\max (I)$ the set of minimal and maximal elements, respectively, i.e.
\begin{align}
	\min(I)=\{p\in I: \mbox{ there is no } q\in I \mbox{ s.t. }q<p\},\\
	\max(I)=\{p\in I: \mbox{ there is no } q\in I \mbox{ s.t. }p<q\}.
\end{align}

The least upper bound and the greatest lower bound of two elements $p,q\in P$ are denoted by $p\lub q$ and $p\glb q$, respectively.\\
In our work, we will regard persistence modules that are indexed by a subset of $\R^2$, which, however, do not have the usual partial order as $\R^2$. We will specify the partial order in the next section. We sort the elements of $\min(I)$ and $\max(I)$ in ascending order by their $x$-coordinates and the elements of $\max(I)$ in descending order, i.e. $\min(I)=\{ p_0,p_1,\ldots,p_k \}$ and $\max(I)=\{q_0,q_1,\ldots,q_l\}$. Then, we define the two paths
\begin{align}
	\Gamma_{\min}&:p_0,(p_0\lub p_1), p_1 ,(p_1\lub p_2), \ldots ,(p_{k-1}\lub p_k),p_k,\\
	\Gamma_{\max}&:q_0,(q_0\glb q_1),q_1,(q_1\glb q_2),\ldots ,(q_{l-1}\glb q_l),q_l.
\end{align}
Clearly, the set of elements in $\Gamma_{\min}$ is a lower fence of $M$ and the set of elements in $\Gamma_{\max}$ is an upper fence of $M$.
\begin{definition}
	We define the path $\Gamma_{\partial I}$ of an interval $I\subset P$ as the path obtained by composing $\Gamma_{\min}$, any arbitrary path $\Gamma'$ between $p_k$ and $q_l$ (or $p_0$ and $q_0$) and $\Gamma_{\max}$. Further, for a persistence module $M$ we denote by $M_{\partial I}$ its restriction to the path, i.e. $M|_{\Gamma_{\partial I}}$.
\end{definition}
The following theorem is a slight variation of Theorem 24 in \cite{dey2022} and hence, the proof differs only slightly from the proof given in \cite{dey2022}. The difference between our theorem and the theorem in \cite{dey2022} is that we proved it for more general paths $\Gamma_{\partial I}$. The entire proof can be found in Appendix \ref{app:proof_main}.
\begin{theorem}\label{theo_main}
	Let $I\subset P$ be an interval. Then, $\rank(M|_I) = \rank(M_{\partial I})$. 
\end{theorem}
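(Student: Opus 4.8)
The plan is to factor the generalized rank map $\psi_{M|_I}$ through the limit and colimit of the restriction $M_{\partial I}$ and show that this factorization does not drop the rank, arguing in both directions. The key observation is that the set of vertices of $\Gamma_{\partial I}$ contains both a lower fence and an upper fence of the poset $I$ (namely the set of vertices of $\Gamma_{\min}$ is a lower fence and the set of vertices of $\Gamma_{\max}$ is an upper fence, as remarked just before the statement), so Proposition \ref{prop_iso_limcolim} applies with $P$ replaced by $I$.

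First I would set up the comparison morphisms. Since $\Gamma_{\partial I}$ is a sub-diagram of $M|_I$, its limit $\limit M_{\partial I}$ is a cone over $M|_{\Gamma_{\partial I}}$ but also, via restriction, there are maps in both directions: a section of $M|_I$ restricts to a section along $\Gamma_{\partial I}$, giving $\limit M|_I \to \limit M_{\partial I}$, and conversely the section extension map of Proposition \ref{prop_iso_limcolim} (applied to the lower fence $\Gamma_{\min} \subset I$) shows $\limit M|_I \cong \limit M|_{\Gamma_{\min}}$; similarly $\colimit M|_I \cong \colimit M|_{\Gamma_{\max}}$. So it suffices to compare $\rank(\psi_{M|_I})$ with $\rank$ of the canonical map $\limit M|_{\Gamma_{\min}} \to \colimit M|_{\Gamma_{\max}}$ and then absorb the extra vertices of $\Gamma'$. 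The inequality $\rank(M_{\partial I}) \geq \rank(M|_I)$ follows because $\psi_{M|_I}$, restricted through these identifications, factors through $\psi_{M_{\partial I}}$ exactly by the universal-property argument already used in the proof of Lemma \ref{lemma_orderReverse}: $\limit M|_I$ is a cone of $M_{\partial I}$ and $\colimit M|_I$ receives a map from $\colimit M_{\partial I}$, so $\psi_{M|_I}$ factors as a composite through $\psi_{M_{\partial I}}$, whence $\rank(\psi_{M|_I}) \le \rank(\psi_{M_{\partial I}})$.

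The substantive direction is $\rank(M_{\partial I}) \leq \rank(M|_I)$, i.e.\ showing the factorization does not increase rank, equivalently that a section along the full boundary path already extends to a genuine section of $M|_I$ and that the colimit of $M|_I$ does not identify more boundary classes than the colimit of $M_{\partial I}$. For the limit side I would use that $\Gamma_{\min}$ is a lower fence of $I$: given a section along $\Gamma_{\partial I}$, its restriction to $\Gamma_{\min}$ extends uniquely (via $e$ from Proposition \ref{prop_iso_limcolim}) to a section of all of $M|_I$, and one checks this extension agrees on $\Gamma_{\max}$ and $\Gamma'$ with the original data — this uses connectedness of $I$ and the defining compatibility $M(p\le q)\circ\pi_p = \pi_q$. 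Dually, on the colimit side, Proposition \ref{prop_path} guarantees that whenever two boundary vectors become equal in $\colimit M|_I$ there is already a path inside $I$ witnessing it; the point is to push such a path onto $\Gamma_{\max}$ using that $\Gamma_{\max}$ is an upper fence, so the identification already happens in $\colimit M_{\partial I}$. Assembling these, the map $\xi$ of diagram \eqref{commDiag1} (with $L = \Gamma_{\min}$, $U = \Gamma_{\max}$) has the same rank as $\psi_M$, and the extra vertices along $\Gamma'$ change neither limit nor colimit since removing an interior vertex of a path that is comparable to its neighbours leaves the section conditions equivalent.

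The main obstacle I anticipate is the bookkeeping around the arbitrary connecting path $\Gamma'$: one must verify that inserting $\Gamma'$ between $p_k$ and $q_l$ genuinely does not alter $\limit M_{\partial I}$ or $\colimit M_{\partial I}$ — i.e.\ that the section conditions along $\Gamma_{\min} \cup \Gamma' \cup \Gamma_{\max}$ are equivalent to those along $\Gamma_{\min} \cup \Gamma_{\max}$ together with a single compatibility at the junction. Since $\Gamma_{\min}$ is a lower fence and $\Gamma_{\max}$ an upper fence of $I$, any vertex of $\Gamma'$ lies above some vertex of $\Gamma_{\min}$ and below some vertex of $\Gamma_{\max}$, and the value of an extended section there is forced; the care is in confirming this forcing is consistent regardless of which comparable neighbour in $\Gamma'$ one propagates from, which again reduces to connectedness of the relevant down-sets and up-sets in $I$. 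This is precisely the step where our statement differs from Theorem 24 of \cite{dey2022} — there the connecting path is a specific monotone zigzag, here it is arbitrary — so it is the only place the proof genuinely departs from the cited one, and I would spell it out carefully while citing \cite{dey2022} for the remainder.
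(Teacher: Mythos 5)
Your easy direction is fine: since $\Gamma_{\partial I}$ is a connected subposet of $I$, the factorization argument from the proof of Lemma \ref{lemma_orderReverse} does give $\rank(\psi_{M|_I})\leq\rank(\psi_{M_{\partial I}})$. The gap is in the substantive direction, where both claims you lean on are false in general. A section of $M$ along $\Gamma_{\partial I}$ need \emph{not} extend to a section of $M|_I$ (the restriction $\limit M|_I\to\limit M_{\partial I}$ need not be surjective), and the canonical map $\colimit M_{\partial I}\to\colimit M|_I$ need \emph{not} be injective, because a path section only satisfies the compatibilities along \emph{consecutive} path elements, not the global ones; nothing "forces" its values on $\Gamma'$ or $\Gamma_{\max}$ to be pushforwards from the lower fence. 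Concretely, take $I=\{a,m,q_0,q_1\}$ with $a<m$, $m<q_0$, $m<q_1$, so $\min(I)=\{a\}$, $\max(I)=\{q_0,q_1\}$, $\Gamma_{\min}=(a)$, $\Gamma_{\max}=(q_0,m,q_1)$, and take $\Gamma'=(a,q_1)$. Let $M_a=M_{q_1}=0$, $M_m=M_{q_0}=\F$ with $M(m\leq q_0)=\mathrm{id}$ (all other maps are forced to be zero). Every genuine section of $M|_I$ has $v_m=M(a\leq m)(v_a)=0$, so $\limit M|_I=0$; but along the path $a,q_1,m,q_0$ the only constraints on $v_m$ come from $q_0$ and $q_1$, so the tuple with $v_m=v_{q_0}=1$ is a path section and $\limit M_{\partial I}\cong\F$. (The same configuration can be placed on the top row of a square $R_x^\epsilon$ in $ZZ\times\Z$, so this is not an artifact of an exotic poset.) Dually, for $I=\{p_0,p_1,j,q\}$ with $p_0,p_1<j<q$, $M_{p_0}=M_{p_1}=0$, $M_j=M_q=\F$, $M(j\leq q)=\mathrm{id}$ and $\Gamma'=(p_1,q)$, the comparable pair $(j,q)$ is not consecutive on $\Gamma_{\partial I}$, so $\colimit M_{\partial I}\cong\F^2$ maps non-injectively onto $\colimit M|_I\cong\F$. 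So the route "restriction of limits is surjective and comparison of colimits is injective" cannot work as stated.

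What saves the theorem, and what the paper's proof actually does, is to avoid comparing $\limit M_{\partial I}$ with $\limit M|_I$ and $\colimit M_{\partial I}$ with $\colimit M|_I$ directly. One instead takes the further restriction $f:\limit M_{\partial I}\to\limit M|_{\Gamma_{\min}}$, which is surjective because the bijective section restriction $\limit M|_I\to\limit M|_{\Gamma_{\min}}$ of Proposition \ref{prop_iso_limcolim} factors through it, and the canonical map $g:\colimit M|_{\Gamma_{\max}}\to\colimit M_{\partial I}$, which is injective because composing it with the canonical map $\colimit M_{\partial I}\to\colimit M|_I$ recovers the isomorphism of Proposition \ref{prop_iso_limcolim}. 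The substantive step is then the identity $\psi_{M_{\partial I}}=g\circ\xi\circ f$ with $\xi$ as in diagram \ref{commDiag1}: for a path section $(v_r)_{r\in\partial I}$ one must show that the class of the pushforward $M(p\leq q)(v_p)$, for suitable $p\in\Gamma_{\min}$ and $q\in\Gamma_{\max}$ with $p\leq q$, coincides with $[v_q]$ in $\colimit M|_I$, and this is exactly where Proposition \ref{prop_path} and the fact that $\Gamma_{\partial I}$ is a path from $p$ to $q$ enter; it is also the only place where the arbitrariness of $\Gamma'$ has to be handled. Surjectivity of $f$ and injectivity of $g$ then yield $\rank\psi_{M_{\partial I}}=\rank\xi=\rank\psi_{M|_I}$. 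Note how this is consistent with the counterexamples above: the spurious path sections are annihilated by $\psi_{M_{\partial I}}$ and the extra colimit classes are never hit by it, so the rank is unaffected even though your two intermediate claims fail.
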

\begin{remark}
	In other words, this theorem allows us to compute the rank over an interval by computing the rank of a zigzag module that is somehow related to the boundary of that interval. As a consequence, it is sufficient to compute ranks of zigzag modules in order to compute generalized ranks over intervals. Hence, any algorithm that computes barcodes of zigzag modules is suitable for the computation of generalized ranks.
\end{remark}
\begin{remark}
	Note that in the proof of the last theorem we did not use the special construction of $\Gamma_{\partial I}$ as the concatenation of $\Gamma_{\min}$ and $\Gamma_{\max}$. In fact, only the properties of lower and upper fences were used. Hence, Theorem \ref{theo_main} holds for any path $\Gamma$ that is composed of a path through a lower fence, any arbitrary path between the lower and the upper fence and a path through an upper fence. 
\end{remark}

\section{Spatiotemporal persistence landscapes}\label{sec:spatiotemp_landscapes}
\subsection{Definition}
We define the underlying poset of the extended zigzag module, which we denote by $(ZZ\times\Z,\ll)$, as follows: $ZZ:= \Z$ as a set and 
\begin{align*}
	(a,b) \ll (a',b') \Leftrightarrow b\leq b' \mbox{ and } \begin{cases}
		a = a'-1 & \mbox{ for } a=2z+1 \mbox{ for some } z\in\mathbb{Z},\\
		a= a'+1 & \mbox{ else}.
	\end{cases}
\end{align*}
We equip the set $ZZ\times\Z$ with the maximum metric $d_m$ as in $\mathbb{Z}^2$, i.e. $d_m(x,y)=\max\{|x_1-y_1|, |x_2-y_2|\}$. Furthermore, we define regions $R_x^\epsilon$ in the parameter space around a point $x\in ZZ\times\Z$ as balls around $x$ with radius $\epsilon$ with respect to the maximum norm, so $R_x^\epsilon=\{y\in ZZ\times\Z:y=x+h \mbox{ with } h\in ZZ\times\Z,\; d_m(h,0)\leq \epsilon \}$. Analogously to \cite{vipond2020}, but adapted to our case of a discrete poset, we define persistence landscapes.
\begin{definition}\label{def:landsc}
	The $k$-th persistence landscape $\lambda_k$ of a persistence module $M:ZZ\times\Z\to\cat{vec}$ considers the maximal radius over which $k$ features persist in every (positive) direction through $x$ in the parameter space
	\begin{align*}
		\lambda_k(x):=\sup\{\epsilon\geq 0 :\; \rank( M|_{R_x^\epsilon})\geq k\}.
	\end{align*}
	The persistence landscape $\lambda$ of $M$ is the map $\lambda:\mathbb{N}\times ZZ\times\Z\to \overline{\mathbb{R}}, \;(k,x)\mapsto \lambda_k(x)$. 
\end{definition}
We want to regard landscapes as functions taking values in $\mathbb{R}$, not as in the definition in the extended real numbers $\overline{\mathbb{R}}$. To assure this, in the following we exclude infinite indecomposables in our persistence module $M$.
\begin{remark}\label{rem:rectangular_regions}
	In this work, we restrict our attention to quadratic regions $R_x^\epsilon$. However, choosing the region we implicitly chose a weight on the spatial and temporal direction. To be precise, the dimensions in space and time are treated equally. In the case where one is interested in treating them differently one could simply consider rectangular regions instead of quadratic regions. The definitions and algorithms can be adapted to this case in a straight forward way. \\
	In principle, any shape of intervals can be chosen for the respective regions. Further possibilities are balls with respect to the Euklidean norm or any other suitable norm. In a different approach, long diagonal regions of certain widths were chosen, which the authors called \textit{worms} \cite{gril2023}.
\end{remark}
\begin{lemma}
	The persistence landscapes have the properties:
	\begin{enumerate}
		\item $\lambda_k(x)\geq 0$,
		\item $\lambda_k(x)\geq \lambda_{k+1}(x)$,
		\item $\lambda_k$ is 1-Lipschitz.
	\end{enumerate}
\end{lemma}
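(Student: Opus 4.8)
The plan is to prove the three properties in order, leaning on the definition of $\lambda_k(x)$ as a supremum and on the order-reversing property of the generalized rank invariant (Lemma~\ref{lemma_orderReverse}).

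For property~1, I would observe that $R_x^0 = \{x\}$, so $M|_{R_x^0}$ is the one-object diagram with value $M_x$, whose limit and colimit are both $M_x$ and whose canonical limit-to-colimit map is the identity. Hence $\rank(M|_{R_x^0}) = \dim M_x \geq 0$. If $\dim M_x \geq 1$ then $\varepsilon = 0$ lies in the set over which the supremum is taken for $k=1$, and more generally $0$ lies in that set for every $k \leq \dim M_x$; if $\dim M_x = 0$ one still has $\lambda_k(x) \geq 0$ by the convention that the supremum is taken over a set of nonnegative reals (and one should note the degenerate case where the set is empty, in which case $\sup\emptyset$ should be interpreted as $0$ here, or one restricts to $k$ with $\rank(M|_{R_x^0})\geq k$). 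Either way $\lambda_k(x)\geq 0$.

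For property~2, the key point is monotonicity in $k$: the set $S_k(x) := \{\epsilon \geq 0 : \rank(M|_{R_x^\epsilon}) \geq k\}$ satisfies $S_{k+1}(x) \subseteq S_k(x)$, since $\rank(M|_{R_x^\epsilon}) \geq k+1$ implies $\rank(M|_{R_x^\epsilon}) \geq k$. Taking suprema of nested sets gives $\lambda_{k+1}(x) = \sup S_{k+1}(x) \leq \sup S_k(x) = \lambda_k(x)$.

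For property~3, I want to show $|\lambda_k(x) - \lambda_k(y)| \leq d_m(x,y)$ for all $x,y \in ZZ\times\Z$. Set $\delta := d_m(x,y)$. The crucial geometric observation is that $R_y^{\epsilon} \subseteq R_x^{\epsilon + \delta}$: if $z \in R_y^\epsilon$ then $d_m(z,x) \leq d_m(z,y) + d_m(y,x) \leq \epsilon + \delta$ by the triangle inequality for $d_m$ — but one must also check that $R_y^\epsilon$ and $R_x^{\epsilon+\delta}$ are genuinely intervals of the poset $(ZZ\times\Z,\ll)$ so that the generalized rank is defined on them, and that the containment $R_y^\epsilon \subseteq R_x^{\epsilon+\delta}$ holds as a containment of intervals. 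Granting this, Lemma~\ref{lemma_orderReverse} gives $\rank(M|_{R_x^{\epsilon+\delta}}) \leq \rank(M|_{R_y^\epsilon})$, hence if $\epsilon \in S_k(y)$ then $\epsilon + \delta \in S_k(x)$, so $\lambda_k(x) = \sup S_k(x) \geq \sup S_k(y) + \delta = \lambda_k(y) + \delta$; wait — the inequality goes the right way: $\epsilon+\delta \in S_k(x)$ for every $\epsilon \in S_k(y)$ yields $\lambda_k(x) \geq \lambda_k(y) - \delta$ after rearranging, i.e. $\lambda_k(y) - \lambda_k(x) \leq \delta$. By symmetry in $x$ and $y$ we also get $\lambda_k(x) - \lambda_k(y) \leq \delta$, so $|\lambda_k(x) - \lambda_k(y)| \leq \delta = d_m(x,y)$, which is the $1$-Lipschitz claim.

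The main obstacle I anticipate is property~3, and specifically the bookkeeping around whether $R_x^\epsilon$ is always an interval of the poset $(ZZ\times\Z,\ll)$ and whether the ball inclusion respects the interval structure; the partial order $\ll$ is unusual (it zigzags in the first coordinate), so a ``ball'' in the metric $d_m$ need not be convex with respect to $\ll$, and one may need either a small lemma establishing that these particular regions are intervals, or a more careful argument that replaces the naive inclusion of balls with an inclusion of the relevant intervals. The other two properties are essentially formal consequences of the supremum definition and of Lemma~\ref{lemma_orderReverse}.
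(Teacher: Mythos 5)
Properties 1 and 2 of your proposal are fine and match the paper, which simply observes that they follow from the definition (your remarks on $R_x^0$ and on nested sets $S_k(x):=\{\epsilon\geq 0:\rk(M|_{R_x^\epsilon})\geq k\}$ are harmless elaborations). Property 3, however, has a genuine gap: your containment points the wrong way for Lemma \ref{lemma_orderReverse}. That lemma is order-\emph{reversing}: for intervals $I\subset J$ it gives $\rk(I)\geq\rk(J)$, so the smaller region has the larger rank. From the (true, but useless) inclusion $R_y^{\epsilon}\subseteq R_x^{\epsilon+\delta}$ you only obtain $\rk(M|_{R_y^{\epsilon}})\geq\rk(M|_{R_x^{\epsilon+\delta}})$, and this cannot upgrade $\rk(M|_{R_y^{\epsilon}})\geq k$ to $\rk(M|_{R_x^{\epsilon+\delta}})\geq k$; the step ``if $\epsilon\in S_k(y)$ then $\epsilon+\delta\in S_k(x)$'' is unjustified. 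It is also too strong: if it held, taking suprema would give $\lambda_k(x)\geq\lambda_k(y)+\delta$ (not $\lambda_k(y)-\delta$, as your ``rearranging'' asserts), and this fails already for a module whose only summand is an interval module supported on a single ball around $y$, where moving the center strictly decreases the landscape.

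The correct argument, which is the paper's, uses the opposite inclusion: when the center moves by $\delta=d_m(x,y)$, the radius must \emph{shrink}. If $\lambda_k(y)\leq\delta$ the bound $\lambda_k(y)-\lambda_k(x)\leq\delta$ is immediate; otherwise take any $\epsilon\in S_k(y)$ with $\epsilon>\delta$ and note $R_x^{\epsilon-\delta}\subseteq R_y^{\epsilon}$ by the triangle inequality, so Lemma \ref{lemma_orderReverse} now gives $\rk(M|_{R_x^{\epsilon-\delta}})\geq\rk(M|_{R_y^{\epsilon}})\geq k$, i.e.\ $\epsilon-\delta\in S_k(x)$; passing to the supremum over such $\epsilon$ yields $\lambda_k(x)\geq\lambda_k(y)-\delta$, and by symmetry $|\lambda_k(x)-\lambda_k(y)|\leq\delta$. (The paper phrases this with $h=\lambda_k(y)-d_m(y,x)$ and $R_x^h\subseteq R_y^{\lambda_k(y)}$; working with $\epsilon\in S_k(y)$ and taking suprema as above avoids any worry about whether the supremum defining $\lambda_k(y)$ is attained.) Your caution about whether the balls $R_x^\epsilon$ are genuinely intervals of $(ZZ\times\Z,\ll)$ is reasonable --- the paper uses this implicitly without comment --- but it is not the source of the difficulty; the flaw is the direction of the inclusion fed into the order-reversing rank lemma.
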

\begin{proof} The first two properties follow directly from the definition.
	For the third, we want to show that $|\lambda_k(y)-\lambda_k(x)|\leq d_m(y,x)$ for all $x,y\in ZZ\times\Z$. Without loss of generality, we assume that $\lambda_k(y)\geq\lambda_k(x)\geq 0$. If $\lambda_k(y)\leq d_m(y,x)$, then $\lambda_k(y)-\lambda_k(x)\leq \lambda_k(y)\leq d_m(y,x)$ and we are done. Thus, we assume that $\lambda_k(y)>d_m(y,x)$. Consider $(h,h)\in ZZ\times\Z$, where $h=\lambda_k(y)-d_m(y,x)$. If follows, that $\lambda_k(y)-h=d_m(y,x)$, and hence, that $y-(\lambda_k(y),\lambda_k(y))\leq x-(h,h)$ as well as $x+(h,h)\leq y+(\lambda_k(y),\lambda_k(y))$. In total,
	\begin{align*}
		y-(\lambda_k(y),\lambda_k(y))\leq x-(h,h) \leq x+(h,h)\leq y+(\lambda_k(y),\lambda_k(y))
	\end{align*}
	and so $R_x^h\subset R_y^{\lambda_k(y)}$ and Lemma \ref{lemma_orderReverse} applies, so $\rank\;R_y^{\lambda_k(y)}\leq \rank\; R_x^h$. With $\lambda_k(x)\geq h=\lambda_k(y)-d_m(y,x)$ the result follows.
\end{proof}
One might ask for the connection between landscapes of an extended zigzag module $M$ and the persistence landscapes of restrictions of $M$ along lines in the zigzag or homogeneously filtered direction. The following proposition answers this question and follows directly from the respective definitions.
\begin{proposition}
	Let $M:ZZ\times\Z\to\cat{vec}$ and $l\subset ZZ\times\Z$ be a line in the horizontal or vertical direction, i.e. $l=\{(a,b)|a\in \mathbb{Z}\}$ for some $b\in\mathbb{Z}$ or $l=\{(a,b)|b\in\mathbb{Z}\}$ for some $a\in\mathbb{Z}$. Let $\lambda(M|_l)$ be the landscape of the oneparameter persistence module or the zigzag persistence module, whichever applies. Then,
	\begin{align*}
		\lambda(M)\leq \lambda(M|_l).
	\end{align*}
\end{proposition}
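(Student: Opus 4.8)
The plan is to compare the two landscapes pointwise, i.e. to show for each fixed $k\in\mathbb{N}$ and each $x\in l$ that $\lambda_k(M)(x)\leq \lambda_k(M|_l)(x)$, and for $x\notin l$ there is nothing to prove since $\lambda(M|_l)$ is only a function on $l$ (or, if one interprets $\lambda(M|_l)$ as extended by $0$ off the line, the inequality is trivial there because $\lambda_k(M)(x)$ compared against $0$ still requires an argument — I will note below why the statement should be read as an inequality of functions restricted to $l$). So fix $x\in l$ and write $\epsilon:=\lambda_k(M)(x)$. By Definition \ref{def:landsc} this means $\rank\bigl(M|_{R_x^\epsilon}\bigr)\geq k$ (using that the supremum is attained in the discrete poset, or taking $\epsilon$ slightly smaller and letting it increase; since everything is integer-valued this is harmless).

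The key step is to relate the region $R_x^\epsilon\subset ZZ\times\Z$ to the interval $R_x^\epsilon\cap l$ inside the line $l$. First I would observe that $R_x^\epsilon\cap l$ is an interval of the subposet $l$ (it is order-convex and connected: in the horizontal case it is a zigzag segment of $ZZ$, in the vertical case an ordinary interval of $\Z$), so the generalized rank $\rank\bigl((M|_l)|_{R_x^\epsilon\cap l}\bigr)$ is defined and is exactly what enters $\lambda_k(M|_l)(x)$ when the radius is $\epsilon$; concretely $\lambda_k(M|_l)(x)=\sup\{\delta\geq 0 : \rank((M|_l)|_{R_x^\delta\cap l})\geq k\}$, and $R_x^\delta\cap l$ is the ball of radius $\delta$ around $x$ in $l$. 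Hence it suffices to prove the monotonicity statement
\begin{align*}
	\rank\bigl(M|_{R_x^\epsilon}\bigr)\ \leq\ \rank\bigl((M|_l)|_{R_x^\epsilon\cap l}\bigr).
\end{align*}
This is the analogue, for the generalized rank, of the elementary fact that restricting a module to a subposet can only increase the rank over a smaller interval; it should follow from Lemma \ref{lemma_orderReverse} together with the observation that $R_x^\epsilon\cap l\subseteq R_x^\epsilon$ — but note Lemma \ref{lemma_orderReverse} as stated compares ranks of the \emph{same} module over nested intervals, whereas here we also change the ambient module from $M$ to $M|_l$. I would therefore argue directly: the limit-to-colimit map $\psi_{M|_{R_x^\epsilon}}$ factors through $\psi_{(M|_l)|_{R_x^\epsilon\cap l}}$. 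Indeed, since $R_x^\epsilon\cap l$ is a lower (and upper) fence of $R_x^\epsilon$ — because for any $q\in R_x^\epsilon$ the sets $\{r\le q\}\cap l$ and $\{q\le r\}\cap l$ are nonempty and connected (this is where I use that $l$ is a full horizontal or vertical line and $R_x^\epsilon$ a box) — Proposition \ref{prop_iso_limcolim} gives $\limit M|_{R_x^\epsilon}\cong \limit (M|_l)|_{R_x^\epsilon\cap l}$ and likewise for colimits, and these isomorphisms intertwine the canonical limit-to-colimit maps (diagram \eqref{commDiag1}), so in fact the ranks are \emph{equal}, giving the desired inequality a fortiori.

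Granting the rank inequality, the conclusion is immediate: for every integer radius $\epsilon\leq \lambda_k(M)(x)$ we get $\rank((M|_l)|_{R_x^\epsilon\cap l})\geq \rank(M|_{R_x^\epsilon})\geq k$, hence $\epsilon\leq \lambda_k(M|_l)(x)$ by definition of the latter supremum; taking $\epsilon=\lambda_k(M)(x)$ yields $\lambda_k(M)(x)\leq \lambda_k(M|_l)(x)$. Since $k$ and $x\in l$ were arbitrary, $\lambda(M)\leq \lambda(M|_l)$ as functions on the relevant domain.

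The main obstacle I anticipate is the fence verification in the middle step: one must check carefully that $R_x^\epsilon\cap l$ really is both a lower and an upper fence of the box $R_x^\epsilon$ under the non-standard order $\ll$ on $ZZ$. The horizontal case needs a small case analysis because the zigzag order alternates source/sink vertices, so "down-sets" and "up-sets" in $R_x^\epsilon$ are not the obvious sub-boxes; but since the line $l$ meets every such set in a connected zigzag segment, the fence property holds. The vertical case is the standard-poset situation and is routine. Everything else — order-convexity and connectedness of $R_x^\epsilon\cap l$, the identification with the radius-$\epsilon$ ball in $l$, and the passage from the rank inequality to the landscape inequality — is direct from the definitions, so the proof is short once this point is settled.
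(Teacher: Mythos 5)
Your overall skeleton --- fix $k$ and $x\in l$, reduce the claim to a comparison of generalized ranks over $R_x^\epsilon$ and $R_x^\epsilon\cap l$, then translate back into the landscape suprema --- is the right one (the paper itself gives no written argument beyond saying the proposition follows directly from the definitions, and this reduction is evidently what is meant). However, the step you actually use to obtain the rank comparison fails. The set $R_x^\epsilon\cap l$ is in general neither a lower nor an upper fence of $R_x^\epsilon$: for a horizontal line $l$ it is the middle row of the box, and for any $q$ in the top (poset-minimal) row of the box the set $\{r\in R_x^\epsilon: r\ll q\}$ lies entirely in that top row, so its intersection with the middle row is empty once $\epsilon\geq 1$ (dually for the bottom row and the upper-fence condition); for a vertical line, any $q$ whose zigzag index differs from that of $l$ by at least two, or by one with the wrong parity, is incomparable to every point of $l$, so the intersection is again empty. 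Hence Proposition \ref{prop_iso_limcolim} does not apply, and the conclusion you draw from it --- that $\rank(M|_{R_x^\epsilon})$ \emph{equals} $\rank\bigl((M|_l)|_{R_x^\epsilon\cap l}\bigr)$ --- is false; it would even force $\lambda(M)=\lambda(M|_l)$ on $l$, which is too strong. Concretely, take $M$ to be the interval module supported on the single horizontal row through $x$: every section of $M|_{R_x^\epsilon}$ with $\epsilon\geq 1$ is forced to vanish by the zero spaces in the row above, so $\rank(M|_{R_x^\epsilon})=0$, while the restriction $M|_l$ has large landscape values.

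The repair is exactly the simpler argument you raised and then abandoned. Your worry about ``changing the ambient module'' is vacuous: by Definition \ref{defi_genrank} the generalized rank $\rank(M|_I)$ depends only on the diagram $M|_I$, and $(M|_l)|_{R_x^\epsilon\cap l}$ and $M|_{R_x^\epsilon\cap l}$ are literally the same diagram. Since $R_x^\epsilon\cap l$ is an interval of $ZZ\times\Z$ contained in $R_x^\epsilon$ (your convexity and connectedness check is fine), Lemma \ref{lemma_orderReverse} gives $\rank(M|_{R_x^\epsilon})\leq\rank(M|_{R_x^\epsilon\cap l})=\rank\bigl((M|_l)|_{R_x^\epsilon\cap l}\bigr)$, which is all that is needed; your identification of $\lambda_k(M|_l)(x)$ with $\sup\{\delta\geq 0:\rank((M|_l)|_{R_x^\delta\cap l})\geq k\}$ and your final paragraph then go through unchanged, yielding the inequality (and only the inequality, as it must).
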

Under some finiteness assumptions the defined spatiotemporal persistence landscapes can be viewed as being elements of the Banach spaces $L_p(\mathbb{N}\times\mathbb{Z}^2)$. To achieve this, we can restrict to the case where the persistence modules are defined on a bounded set in $ZZ\times \mathbb{Z}$. For applications, this is a reasonable assumption since only finitely many values for the spatial parameter $\epsilon_1,\ldots,\epsilon_n$ are chosen and all time series are finite. Consequently, the values of the landscapes are finite. Being elements of a Banach space, on the space of persistence landscapes we have a notion of distance.
\begin{definition}
	Let $M$ and $N$ be extended zigzag modules such that the respective landscapes are elements of $L^p(\mathbb{N}\times\Z^2)$. The $p$-landscape distance $d_\lambda^p$ is defined as
	\begin{align*}
		d_\lambda^p(M,N)= \| \lambda(M)-\lambda(N)\|_p.
	\end{align*} 
\end{definition}
\begin{example}
	Notice that the landscape distance is only a pseudo-distance on the space of isomorphism classes of persistence modules. For example, the two modules $M=I_{R_{(2,2)}^1}\oplus I_{R_{(2,3)}^1} $ and $N=I_{R_{(2,2)}^1\cup R_{(2,3)}^1 }$ have the same persistence landscape, however, they are clearly not isomorphic. As an immediate consequence, the spatiotemporal persistence landscapes are not a complete invariant.
\end{example}

\subsection{Statistics}
In the setting of oneparameter persistent homology, one major advantage of the use of persistence landscapes over persistence diagrams is that in contrast to persistence diagrams, persistence landscapes allow for a unique mean \cite{bubenik2015}. Analogously to the case of one- and multiparameter landscapes, for spatiotemporal persistence landscapes we can also define a mean landscape of a set of landscapes by taking the pointwise mean. The resulting landscape is in general not the landscape of a persistence module, however, local maxima of the mean landscapes can be interpreted as parameter values where persistent topological features in space and time are located. Furthermore, taking the average landscape over a set of noisy measurements could reduce the influence of noise. \\
Under suitable finiteness assumptions the landscapes are elements of a Banach space, namely the Lebesgue space $L^p(\mathbb{N}\times\mathbb{Z}^2)$ equipped with the usual $p$-norm. We apply the theory of probability in Banach spaces in order to obtain statistical results. In order to guarantee separability of the Lebesgue space we assume that $1\leq p<\infty$. For more details on that topic, see \cite{ledoux1991} and Appendix \ref{app:prob_banach}.

Following the procedures for oneparameter and multiparameter persistence landscapes (\cite{bubenik2015, vipond2020}), we view the spatiotemporal persistence landscapes as random variables that take values in a Banach space. To be precise, let $X$ be a random variable on the probability space $(\Omega,\mathcal{F},P)$, i.e. $X(\omega)$ is the data for $\omega\in\Omega$ with corresponding landscape $\Lambda(\omega)=\lambda(X(\omega))$. Thus, $\Lambda:(\Omega,\mathcal{F},P)\to L^p(\mathbb{N}\times\mathbb{Z}^2)$ is a random variable with values in a Banach space. We denote the expectation value of a real random variable $X$ by $E(X)$. The analogue to the expactation values in case of a random variable $V$ with values in a Banach space is the so-called Pettis integral (see Definition \ref{def:pettis_integral}) and is also denoted by $E(V)$.  \\
We assume that $X_i$ are independent identically distributed copies of $X$ with corresponding landscapes $\Lambda_i$. By $\overline{\Lambda}^n$ we denote the pointwise mean of the first $n$ landscapes. Analogously to the case of oneparameter and multiparameter persistence landscapes, applying the theory of random variables with values in a Banach space yields the following results. These results are stated without proofs because these theorems follow directly from the developed theory for multiparameter persistence landscapes \cite{vipond2020}. This comes from the fact that spatiotemporal persistence landscapes and biparameter persistence landscapes take values in the same Banach space even though we define them for different kind of data.
\begin{theorem}(Strong law of large numbers for spatiotemporal persistence landscapes)\\
	$\overline{\Lambda}^n\to E(\Lambda)$ almost surely if and only if $E(\|\Lambda\|)<\infty$.
\end{theorem}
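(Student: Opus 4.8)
## Proof proposal for the strong law of large numbers for spatiotemporal persistence landscapes

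The plan is to reduce the statement entirely to the corresponding result for multiparameter (specifically biparameter) persistence landscapes proved by Vipond in \cite{vipond2020}, exploiting the fact — already emphasized in the paragraph preceding the theorem — that under the operative finiteness assumptions both kinds of landscapes live in the \emph{same} Banach space $L^p(\mathbb{N}\times\mathbb{Z}^2)$ with $1\leq p<\infty$, and that this space is separable. Once one is working in a fixed separable Banach space $B$, the statement $\overline{\Lambda}^n\to E(\Lambda)$ almost surely iff $E(\|\Lambda\|)<\infty$ is simply the vector-valued strong law of large numbers (the Mourier / Beck form of the SLLN for i.i.d. Banach-space-valued random variables; see \cite{ledoux1991} and Appendix \ref{app:prob_banach}), applied to the random variable $\Lambda=\lambda(X)$.

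The steps I would carry out, in order: \textbf{(1)} Invoke the standing finiteness hypothesis so that for each $\omega$ the landscape $\lambda(X(\omega))$ is a genuine element of $L^p(\mathbb{N}\times\mathbb{Z}^2)$ — this uses the discussion that infinite indecomposables are excluded, the modules are supported on a bounded subset of $ZZ\times\mathbb{Z}$, and hence the $\lambda_k(x)$ are finite and, being $1$-Lipschitz and eventually zero, $p$-summable. \textbf{(2)} Verify that $\Lambda:(\Omega,\mathcal{F},P)\to L^p(\mathbb{N}\times\mathbb{Z}^2)$ is Borel measurable; since $L^p(\mathbb{N}\times\mathbb{Z}^2)$ is separable this can be checked coordinatewise — each evaluation $\omega\mapsto\lambda_k(x)(\omega)$ is measurable because, via Theorem \ref{theo_main} and the generalized-rank formulation of Definition \ref{def:landsc}, it is a deterministic function of finitely many ranks of linear maps built from $X(\omega)$, and the expectation $E(\Lambda)$ exists as a Pettis integral (Definition \ref{def:pettis_integral}) precisely when $E(\|\Lambda\|)<\infty$. \textbf{(3)} Apply the SLLN in separable Banach spaces to the i.i.d.\ sequence $\Lambda_i=\lambda(X_i)$: the necessary and sufficient condition for $\overline{\Lambda}^n=\tfrac1n\sum_{i=1}^n\Lambda_i\to E(\Lambda)$ almost surely (in the norm of $L^p$) is exactly $E(\|\Lambda\|)<\infty$. \textbf{(4)} Note that $\overline{\Lambda}^n$ in the theorem statement is, by definition, the pointwise mean of the first $n$ landscapes, which coincides with $\tfrac1n\sum_{i=1}^n\Lambda_i$ as an element of $L^p$, so the conclusion transfers verbatim.

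I do not expect a genuine obstacle here, which is why the authors state the result without a detailed proof: the only point requiring a moment's care is \textbf{(2)}, namely confirming measurability of the landscape-valued map, because the landscapes are defined through a supremum over $\epsilon\geq 0$ of generalized ranks rather than through an obviously measurable formula. This is handled by observing that on a bounded index set the supremum in $\lambda_k(x)$ is attained and ranges over a finite set of candidate radii, so $\lambda_k(x)$ equals a maximum of finitely many integer-valued functions of the data, each of which is a rank of a matrix whose entries depend measurably (indeed, in applications, piecewise-continuously) on $X(\omega)$; hence the full vector $(k,x)\mapsto\lambda_k(x)$ is a measurable $L^p$-valued map. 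With that in hand, the remainder is a direct citation of \cite{ledoux1991,vipond2020}, and nothing about the zigzag (as opposed to two-parameter) nature of the underlying poset enters the argument once one has passed to the common target Banach space.
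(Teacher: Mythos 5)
Your proposal is correct and follows essentially the same route as the paper: the authors state the theorem without proof precisely because, as you argue, the landscapes are Banach-space-valued random variables in the separable space $L^p(\mathbb{N}\times\mathbb{Z}^2)$, so the result is the strong law of large numbers for i.i.d.\ Banach-space-valued random variables from \cite{ledoux1991} (recalled in Appendix \ref{app:prob_banach}), exactly as in the multiparameter case of \cite{vipond2020}. Your additional care about measurability and the identification of the pointwise mean with the Banach-space average is a sound elaboration of the same argument rather than a different approach.
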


\begin{theorem}(Central limit theorem for spatiotemporal persistence landscapes)\\
	Let $p\geq 2$, $E(\|\Lambda\|)<\infty$ and $E(\|\Lambda\|^2)< \infty$. Then $\sqrt{n}(\overline{\Lambda}^n-E(\Lambda))$ converges weakly to a Gaussian random variable  with the same covariance structure as $\Lambda$.  
\end{theorem}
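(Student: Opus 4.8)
The plan is to reduce the statement to the abstract central limit theorem for random elements of a Banach space of type~$2$, proceeding exactly as in the treatment of multiparameter persistence landscapes in \cite{vipond2020}. The structural facts one needs are: first, under the stated finiteness assumptions the landscape $\Lambda=\lambda\circ X$ takes values in $L^p(\mathbb{N}\times\Z^2)$, which for $1\leq p<\infty$ is a separable Banach space; second, the hypothesis $p\geq 2$ guarantees that $L^p(\mathbb{N}\times\Z^2)$ is of type~$2$ (equivalently, $2$-smooth), a classical fact recorded in \cite{ledoux1991}; and third, $\Lambda$ is Borel measurable as an $L^p$-valued map, which --- just as for the strong law above and following \cite{bubenik2015, vipond2020} --- follows from separability together with the explicit, coordinatewise $1$-Lipschitz description of $\lambda$ in Definition~\ref{def:landsc}, so that $\Lambda$ is a genuine $L^p(\mathbb{N}\times\Z^2)$-valued random variable.

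First I would observe that $E(\|\Lambda\|)<\infty$ ensures the existence of the Pettis integral $E(\Lambda)\in L^p(\mathbb{N}\times\Z^2)$ (Definition~\ref{def:pettis_integral}), so that the centered i.i.d.\ summands $\Lambda_i-E(\Lambda)$ are well defined, have mean zero, and satisfy $E(\|\Lambda_i-E(\Lambda)\|^2)<\infty$ by the second hypothesis. Since the ambient space is of type~$2$, the Banach-space central limit theorem for type~$2$ spaces (see \cite{ledoux1991} and Appendix~\ref{app:prob_banach}) applies to mean-zero random variables with finite strong second moment, and yields that
\[
	\frac{1}{\sqrt{n}}\sum_{i=1}^{n}\bigl(\Lambda_i-E(\Lambda)\bigr)=\sqrt{n}\bigl(\overline{\Lambda}^n-E(\Lambda)\bigr)
\]
converges weakly to a centered Gaussian random element $G$ of $L^p(\mathbb{N}\times\Z^2)$, which is precisely the assertion.

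It remains to identify the limit law. For a type~$2$ space, finiteness of the second moment already makes $\Lambda$ pre-Gaussian, and the Gaussian limit $G$ furnished by the central limit theorem is characterized as the centered Gaussian whose covariance operator coincides with that of $\Lambda$; explicitly, $E\!\left[f(G)\,g(G)\right]=E\!\left[f\bigl(\Lambda-E(\Lambda)\bigr)\,g\bigl(\Lambda-E(\Lambda)\bigr)\right]$ for all continuous linear functionals $f,g$ on $L^p(\mathbb{N}\times\Z^2)$, which is exactly the meaning of ``same covariance structure as $\Lambda$''. I expect the only points that genuinely require verification to be the measurability of $\Lambda$ and the type~$2$ property of $L^p(\mathbb{N}\times\Z^2)$ for $p\geq 2$; once these are in place the conclusion is an immediate invocation of the abstract theorem. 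As noted in the text, no probabilistic input beyond \cite{vipond2020} and \cite{ledoux1991} is required, because spatiotemporal and biparameter persistence landscapes take values in the same Banach space.
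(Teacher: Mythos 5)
Your argument is correct and is essentially the route the paper intends: the paper states this theorem without proof, noting that it follows directly from the multiparameter landscape theory of \cite{vipond2020} because the landscapes land in the same Banach space $L^p(\mathbb{N}\times\Z^2)$, and the abstract central limit theorem for type~$2$ Banach spaces quoted in Appendix~\ref{app:prob_banach} (with the fact that $L^p$ is of type~$2$ for $2\leq p<\infty$) then gives the conclusion. Your additional care about separability, Borel measurability of $\Lambda$, existence of the Pettis integral, and the identification of the covariance structure simply makes explicit the hypotheses the paper leaves to the cited references.
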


\section{Stability of Spatiotemporal Persistence Landscapes}\label{sec:stability}

In this section, we show the stability of spatiotemporal persistence landscapes. Stability is a central part of persistent homology to assure that small perturbations of the input data do not alter the invariant too much. \\
In \cite{vipond2020}, it was shown that multiparameter persistence landscapes are stable with respect to the interleaving distance. Hence, in this section we start by defining an analogue to the interleaving distance for extended zigzag modules, followed by the proof of stability.

\subsection{Block extension functor for zigzag modules}
To define an interleaving distance on extended zigzag modules we extend the approach in \cite{botnan2018} and in \cite{kim2021arXivV4} (version 4 on arXiv). In both approaches, the authors send a zigzag persistence module to a $\R^\op\times\R$-indexed module which allows them to define an interleaving distance for zigzag modules. Here, by $\R^\op$ we mean the opposite category. The partial order on $\mathbb{R}^{\op}\times \mathbb{R}$ (or $\mathbb{Z}^{\op}\times\mathbb{Z}$, respectively) is given by $(a,b)\leq (c,d)$ iff $c\leq a\leq b\leq d$. This can be motivated by the partial order on intervals, where $[a,b]\subset [c,d]$ iff $c\leq a\leq b\leq d$. We slightly change and extend this approach to send an extended zigzag module to an $\Z^\op\times\Z\times\Z$-indexed module in order to define the interleaving distance.\\
At first, we show how to extend a zigzag module to a $\Z^\op\times\Z$-indexed module. 
By zigzag module we mean a functor from the poset $ZZ$ to $\cat{vec}$.
We include $ZZ$ into the poset $\Z^\op\times \Z$ as follows: we map a sink index $i$ to $(i,i)$ and a source index $j$ to $(j+1,j-1)$, shown in Figure \ref{fig:inclusion_and_U}. Note that we required the maps being strictly alternating so that every index is either a sink or a source index.
\begin{figure}
	\centering
	\includegraphics[width=0.42\textwidth]{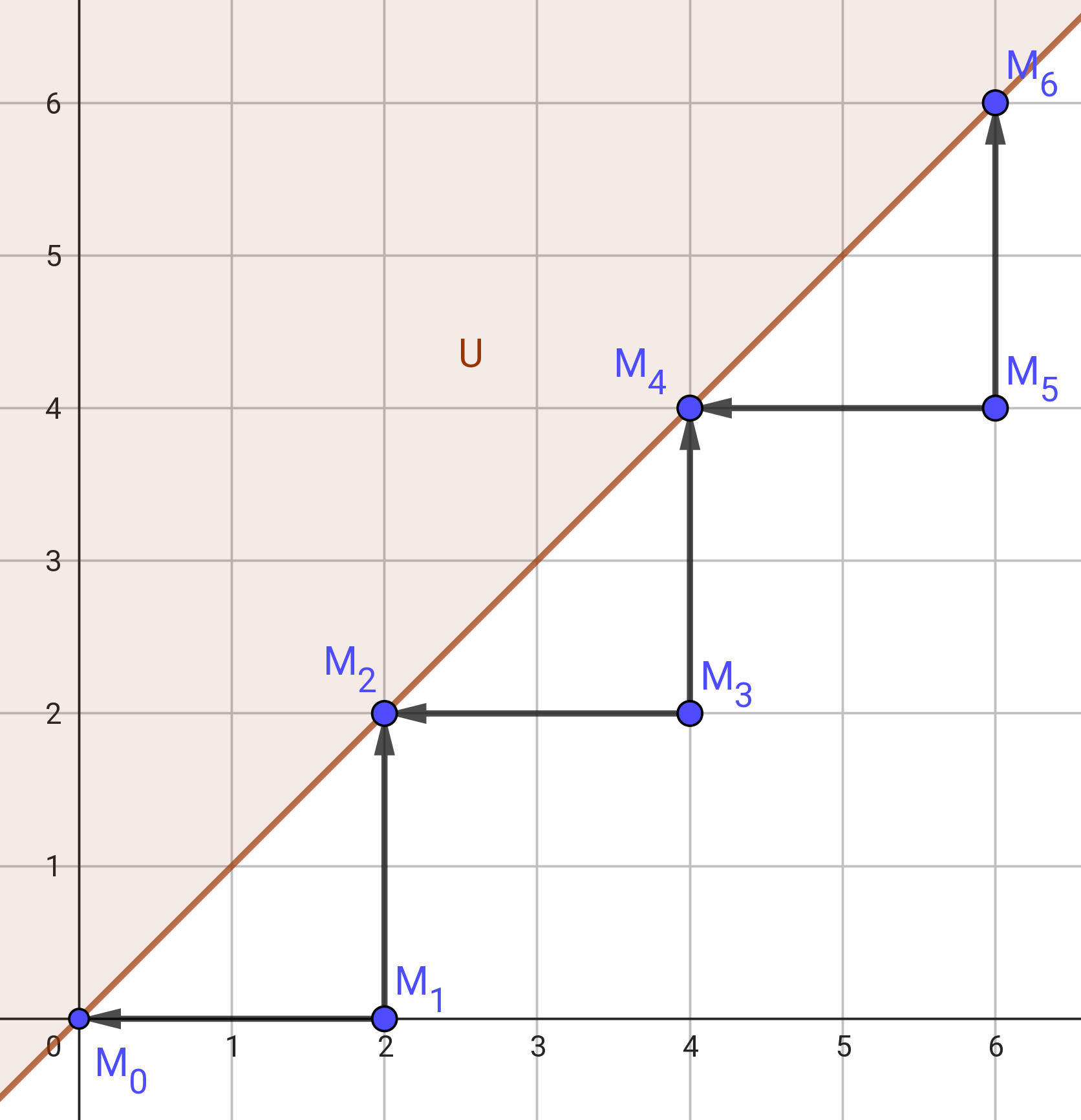}
	\caption{Inclusion of the zigzag poset into $\Z^\op\times\Z$.}\label{fig:inclusion_and_U}
\end{figure}
Since this is an order-preserving map the inclusion $\iota:ZZ\hookrightarrow\Z^\op\times\Z$ is a functor. To map a zigzag module to a $\Z^\op\times\Z$ indexed module we consider the composition of three functors: first, we define $E_1:\cat{vec}^{ZZ}\to\cat{vec}^{\Z^\op\times\Z}$ as the left Kan extension (see Appendix \ref{app:subsec_Kan}) along the inclusion functor $\iota$. Second, we restrict the module to the set $U:=\{(i,j)\in\Z^\op\times\Z|i\leq j\}$ by the restriction functor $(-)|_U:\cat{vec}^{\Z^\op\times\Z}\to\cat{vec}^U$. Finally, we define $E_2:\cat{vec}^U\to\cat{vec}^{\Z^\op\times\Z}$ as the right Kan extension along the canonical inclusion $\kappa:U\hookrightarrow \Z^\op\times\Z$. In total, we define the \textit{block extension} functor
\begin{align*}
	E:=E_2\circ (-)|_U\circ E_1:\cat{vec}^{ZZ}\to\cat{vec}^{\Z^\op\times\Z}.
\end{align*}
It is known that any zigzag module decomposes into a direct sum of interval modules \cite{carlsson2010_zigzag}. Following \cite{botnan2018}, we distinguish four different types of interval modules. Here, we denote by $<$ and $\leq$ the partial order in $\Z^2$.
\begin{align*}
	(a,b)_{ZZ}&:=\{i\in\Z|(a,a)<\iota(i)<(b,b)\} &\mbox{for } a<b\in\Z\cup\{-\infty,\infty\}, \\
	[a,b)_{ZZ}&:=\{i\in\Z|(a,a)\leq\iota(i)<(b,b)\} &\mbox{for } a<b\in\Z\cup\{\infty\}, \\
	(a,b]_{ZZ}&:=\{i\in\Z|(a,a)<\iota(i)\leq(b,b)\} &\mbox{for } a<b\in\Z\cup\{-\infty\}, \\
	[a,b]_{ZZ}&:=\{i\in\Z|(a,a)\leq\iota(i)\leq(b,b)\} &\mbox{for } a\leq b\in\Z.
\end{align*}
By $\langle a,b\rangle_{ZZ}$ we denote an interval of any of these four types. \\
In $\cat{vec}^{\Z^\op\times\Z}$, we consider a special class of persistence modules that are called \textit{block decomposable} modules \cite{botnan2018}. These are modules that decompose into a direct sum of block intervals, where the blocks are sets of the following forms
\begin{align*}
	(a,b)_{BL}&:=\{(x,y)\in\Z^\op\times\Z|a<x,y<b\} &\mbox{for } a<b\in\Z\cup\{-\infty,\infty\}, \\
	[a,b)_{BL}&:=\{(x,y)\in\Z^\op\times\Z|a\leq y<b\} &\mbox{for } a<b\in\Z\cup\{\infty\}, \\
	(a,b]_{BL}&:=\{(x,y)\in\Z^\op\times\Z|a<x\leq b\} &\mbox{for } a<b\in\Z\cup\{-\infty\}, \\
	[a,b]_{BL}&:=\{(x,y)\in\Z^\op\times\Z|x\leq b,y\geq a\} &\mbox{for } a\leq b\in\Z.
\end{align*}
Again, by $\langle a,b\rangle_{BL}$ we denote a block of any of the above types. 
\begin{remark}
	Note that there is a canonical isomorphism between $\cat{vec}^{\Z^\op\times \Z}$ and $\cat{vec}^{\Z\times\Z}$ induced by the isomorphism $\rho:\Z\times\Z\to\Z^\op\times\Z$ sending each $(a,b)$ to $(-a,b)$. As a result, a $\Z^\op\times\Z$-indexed persistence module in general does not decompose into a direct sum of interval modules, just like $\Z^2$-indexed modules. Hence, block decomposable modules are a proper subset of all $\Z^\op\times\Z$-indexed modules.
\end{remark}
\begin{figure}
	\centering
	\includegraphics[width=0.35\textwidth]{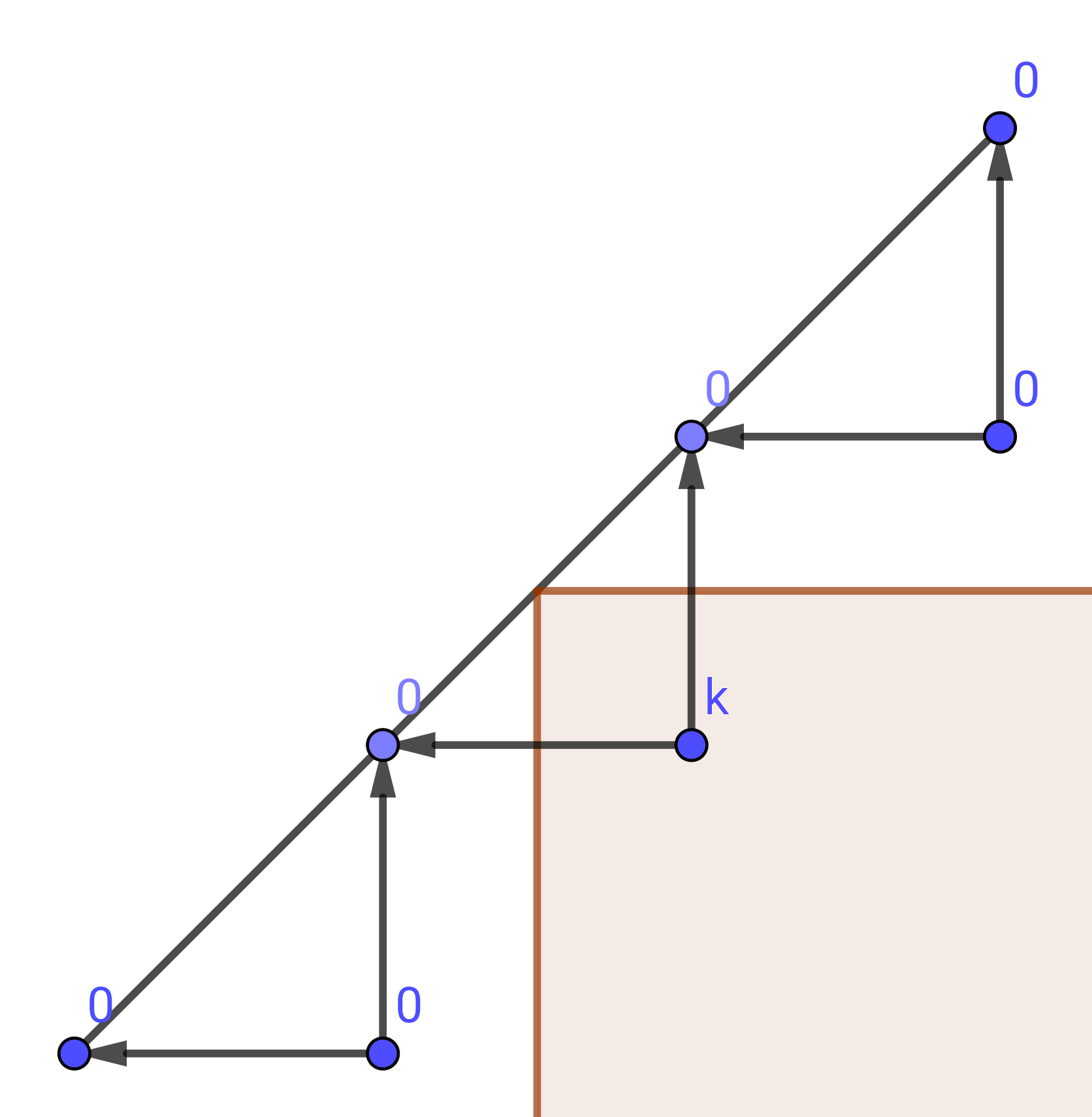}\qquad
	\includegraphics[width=0.35\textwidth]{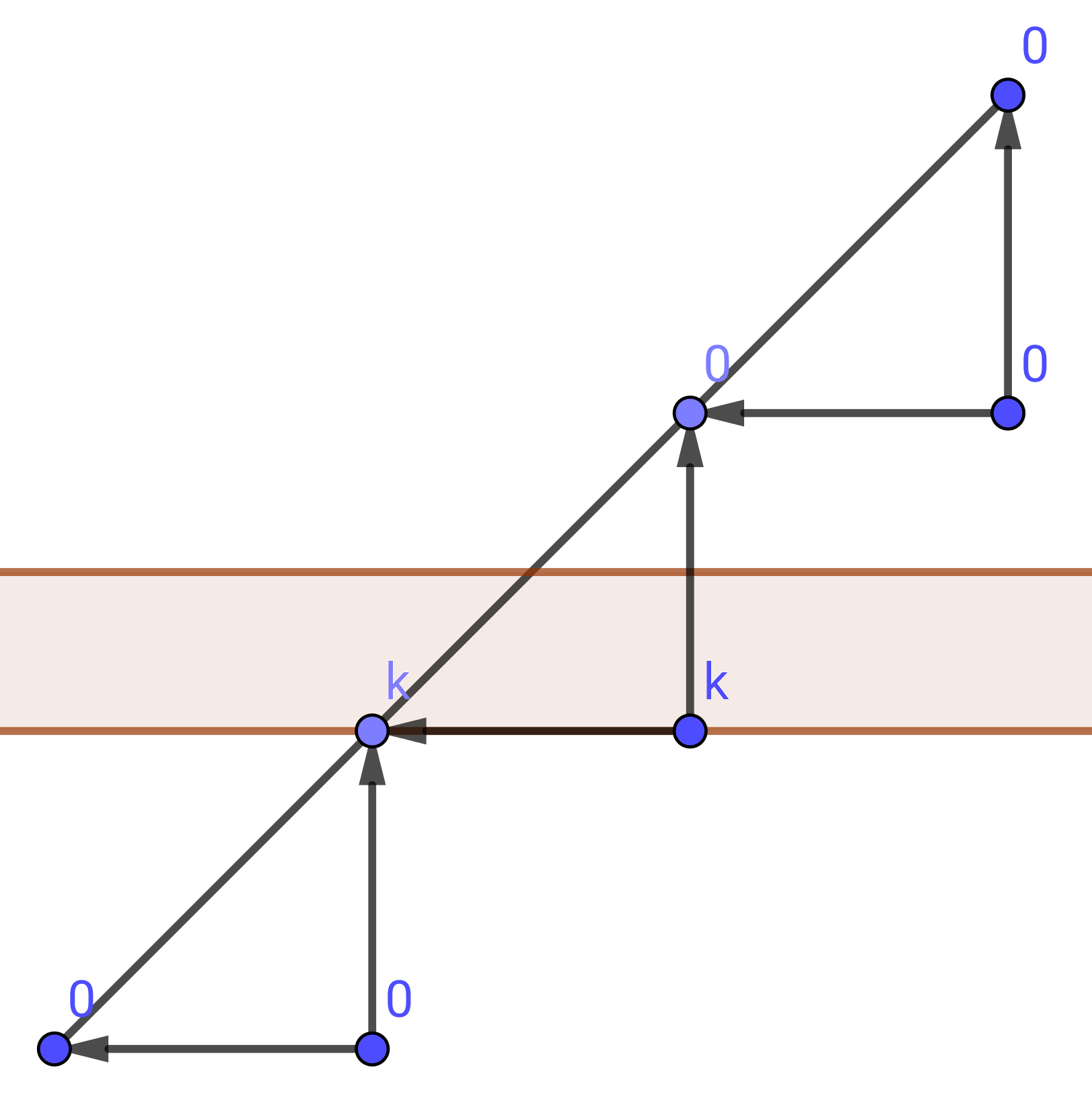}\\
	\vspace{0.35cm}
	\includegraphics[width=0.35\textwidth]{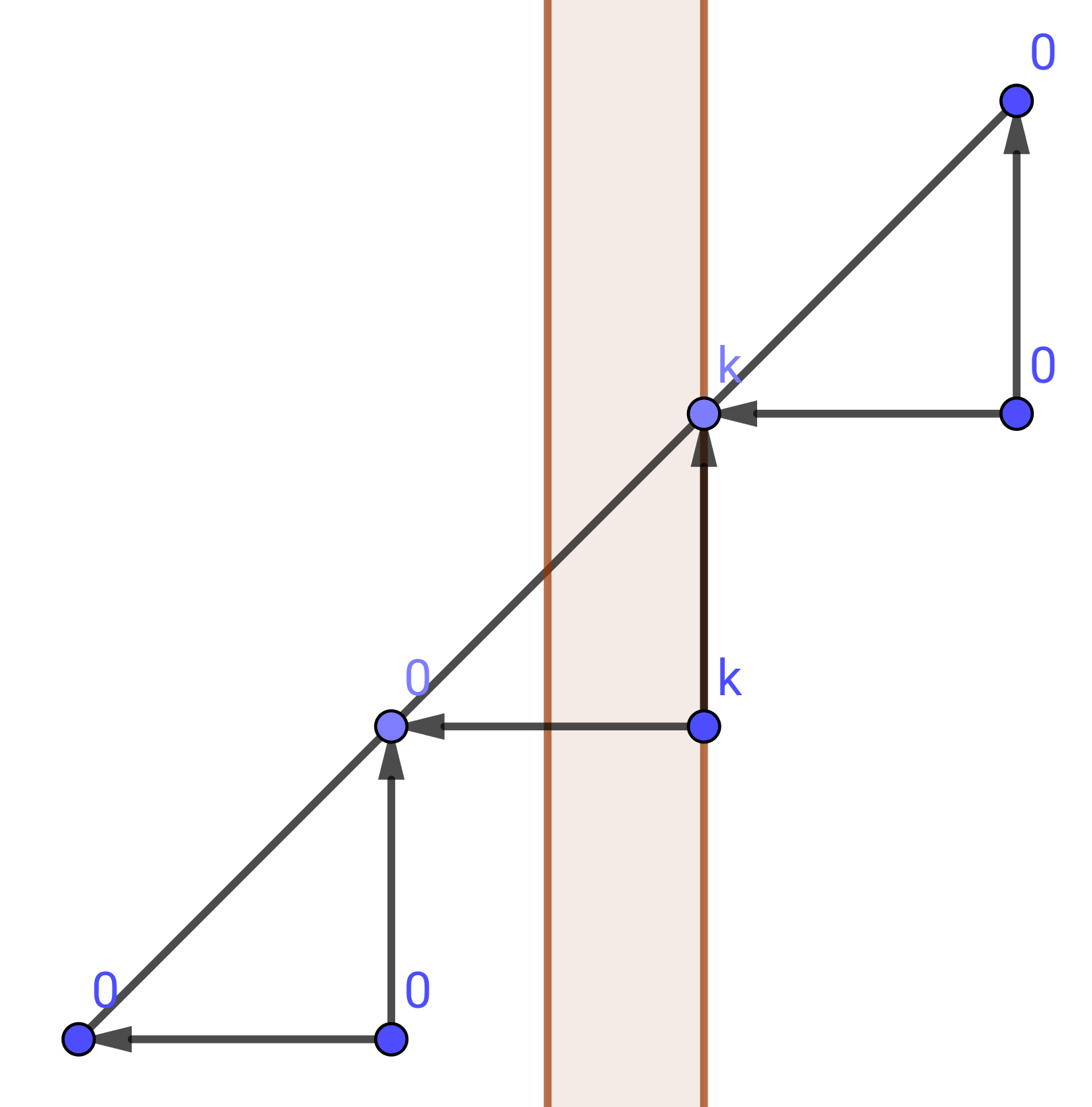}\qquad
	\includegraphics[width=0.35\textwidth]{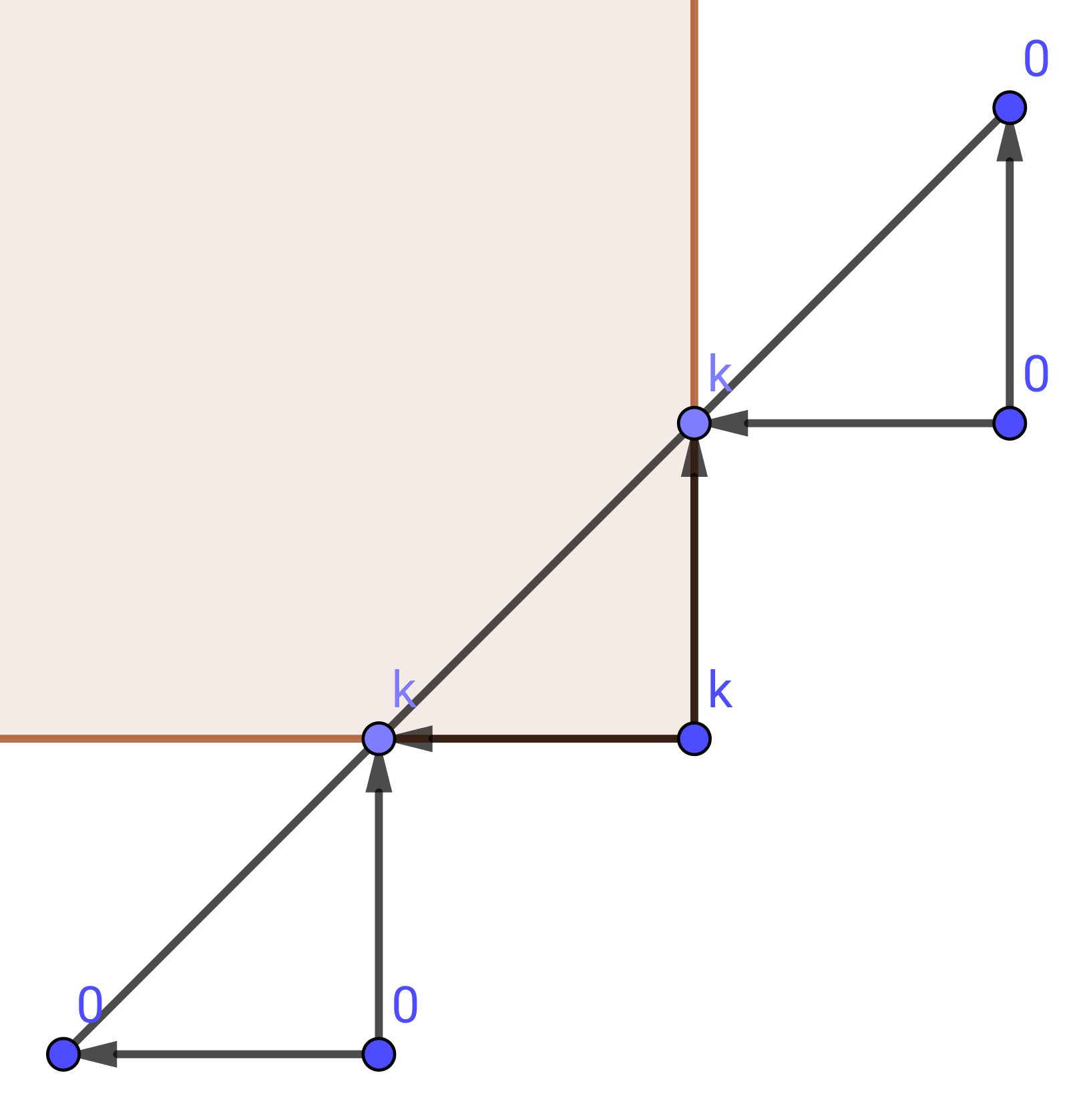}
	\caption{Extension of zigzag intervals to block intervals for the four different types $(\cdot,\cdot)$, $[\cdot,\cdot)$, $(\cdot,\cdot]$ and $[\cdot,\cdot]$ (in that order). Cf. Figure 3 in \cite{botnan2018}.}\label{fig:blocks}
\end{figure}
The following lemma motivates why $E$ is called the block extension functor. 
\begin{lemma}
	The block extension functor $E$ sends zigzag interval modules to block interval modules, i.e. it holds that $E(I_{\langle a,b\rangle_{ZZ}})=I_{\langle a,b\rangle_{BL}}$.
\end{lemma}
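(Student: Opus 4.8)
The plan is to verify the claim $E(I_{\langle a,b\rangle_{ZZ}}) = I_{\langle a,b\rangle_{BL}}$ for each of the four interval types by unwinding the three functors composing $E$, namely the left Kan extension $E_1$ along $\iota$, the restriction to $U$, and the right Kan extension $E_2$ along $\kappa$. Since all three operations commute with direct sums (Kan extensions are adjoints, hence preserve the relevant (co)limits, and restriction is trivially additive), it suffices to track what happens to a single interval module $I_{\langle a,b\rangle_{ZZ}}$. The key computational tool is the pointwise formula for Kan extensions: for $E_1$, the value at $(x,y)\in\Z^\op\times\Z$ is the colimit of $I_{\langle a,b\rangle_{ZZ}}$ over the comma category of indices $i\in ZZ$ with $\iota(i)\leq (x,y)$; for $E_2$, the value at $(x,y)$ is the limit over indices $(s,t)\in U$ with $(x,y)\leq (s,t)$.

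First I would work out $E_1(I_{\langle a,b\rangle_{ZZ}})$. The comma category for a point $(x,y)$ consists of those zigzag indices $i$ whose image $\iota(i)$ lies below $(x,y)$ in the $\Z^\op\times\Z$-order, i.e. $x \le (\iota(i))_1$ and $(\iota(i))_2 \le y$. Recalling $\iota$ sends a sink $i$ to $(i,i)$ and a source $j$ to $(j+1,j-1)$, one describes this set explicitly and checks that the colimit of the interval module over it is $\F$ precisely when the comma category meets the support $\langle a,b\rangle_{ZZ}$ in a connected nonempty piece, and $0$ otherwise; the structure maps are forced to be identities on the support. This produces an intermediate $\Z^\op\times\Z$-module supported on a ``staircase'' region. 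Restricting to $U = \{(i,j): i \le j\}$ then cuts this down. Second, I would compute $E_2$ of that restricted module: the right Kan extension fills in the region below the diagonal by taking limits, and here the explicit limit formula (Proposition \ref{prop_limColimInVec}) shows the value at $(x,y)$ is $\F$ exactly on the block $\langle a,b\rangle_{BL}$ as defined, with the block's shape (open/half-open/closed on each side) matching the zigzag interval's endpoint behavior — this is exactly the content of Figure \ref{fig:blocks}.

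The main obstacle I expect is the bookkeeping of the four cases and, within each, the precise endpoint/boundary behavior: getting the open-versus-closed distinctions right at $a$ and $b$, and confirming that the colimit-then-limit procedure does not accidentally enlarge or shrink the support at the corners (for instance, that the ``closed-closed'' interval $[a,b]_{ZZ}$ produces the upper-triangular block $[a,b]_{BL} = \{x \le b, y \ge a\}$ rather than something smaller near the diagonal). A clean way to organize this is to first treat the ``generic'' infinite case $(a,b)_{ZZ}$ with $a,b$ possibly $\pm\infty$, establish the staircase-to-block picture there, and then obtain the half-open and closed types by the same argument with the boundary indices included or excluded; alternatively one can cite the corresponding computation in \cite{botnan2018} (their Figure 3 and surrounding discussion), since our $E_1$, restriction, and $E_2$ are set up to mirror their construction, and note that the only modification is the integer indexing in place of the real one, which changes nothing essential in the Kan extension formulas.
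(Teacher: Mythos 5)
Your proposal is correct and follows essentially the same route as the paper: the paper's proof simply cites Lemma E.8 of \cite{kim2021arXivV4} (a slight extension of Lemma 4.1 of \cite{botnan2018}) and remarks that one can verify the claim directly from the explicit formula of Lemma \ref{lemma_extension_functor_explicit}, i.e. $E(M)(x,y)=\colimit M|_{[x,y]}$ for $x\leq y$ and $\limit M|_{[y,x]}$ for $x>y$, which collapses your pointwise comma-category computation of $E_1$, $(-)|_U$ and $E_2$ into computing (co)limits of an interval zigzag module over segments and then doing exactly the four-case endpoint bookkeeping you describe. One caution: your opening remark that all three functors commute with direct sums is not needed here (you apply $E$ to a single interval module), and its justification is incorrect for the right Kan extension $E_2$, which as a right adjoint preserves limits but not colimits, hence not arbitrary direct sums; this is precisely why the paper's subsequent lemma on direct-sum preservation carries finiteness hypotheses.
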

\begin{proof}
	This is a consequence of Lemma E.8 in \cite{kim2021arXivV4} (only in version four), which itself is a slight extension of Lemma 4.1 in \cite{botnan2018}. However, the reader can easily verify this by using the explicit formula in Lemma \ref{lemma_extension_functor_explicit}.
\end{proof}
In Figure \ref{fig:blocks}, one can see examples of zigzag interval modules and the corresponding blocks after applying the block extension functor $E$. The differences for the four different types are shown.\\
Under suitable finiteness assumptions the block extension functor preserves direct sums.
\begin{lemma}
	Let $M:ZZ\to\cat{vec}$ such that for all $\langle a,b\rangle_{ZZ}$, $\limit M|_{\langle a,b\rangle_{ZZ}}$ and $\colimit M|_{\langle a,b\rangle_{ZZ}}$ are finite dimensional. Then, if $M\cong\oplus_{k\in K}I_{\langle a_k,b_k\rangle_{ZZ}} $ then $E(M)\cong\oplus_{k\in K}I_{\langle a_k,b_k\rangle_{BL}}$.
\end{lemma}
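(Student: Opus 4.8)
The block extension functor is the threefold composite $E=E_2\circ(-)|_U\circ E_1$, and the plan is to follow a direct sum through each factor in turn. The first factor $E_1$ is the left Kan extension along $\iota$ (Appendix \ref{app:subsec_Kan}), hence a left adjoint to the restriction $(-)|_{ZZ}$, so it preserves all colimits and in particular arbitrary direct sums; thus $E_1(M)\cong\bigoplus_{k\in K}E_1(I_{\langle a_k,b_k\rangle_{ZZ}})$. The restriction functor $(-)|_U$ is computed objectwise and therefore preserves all limits and colimits, so $(E_1M)|_U\cong\bigoplus_{k\in K}(E_1I_{\langle a_k,b_k\rangle_{ZZ}})|_U$. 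Writing $P_k:=(E_1I_{\langle a_k,b_k\rangle_{ZZ}})|_U$ and recalling from the preceding lemma that $E_2(P_k)=E(I_{\langle a_k,b_k\rangle_{ZZ}})=I_{\langle a_k,b_k\rangle_{BL}}$, the statement reduces to showing that the last functor $E_2$ commutes with this direct sum, i.e.\ $E_2\bigl(\bigoplus_{k\in K}P_k\bigr)\cong\bigoplus_{k\in K}E_2(P_k)$.

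This is the only non-formal step, because $E_2$ is the right Kan extension along $\kappa$, hence a right adjoint to $(-)|_U$, and right adjoints need not commute with infinite direct sums. I would establish it pointwise using the explicit description of the (pointwise) right Kan extension: for $x\in\Z^{\op}\times\Z$ one has $E_2(P)(x)\cong\varprojlim P|_{D_x}$, the limit over the comma category $D_x=\{u\in U:x\le u\}$ with its induced order. The combinatorial input is that $D_x$ has only finitely many minimal elements $u_1,\dots,u_m$ — a single one, namely $x$ itself, when $x\in U$, and the diagonal points between the two coordinates of $x$ when $x\notin U$ — and that every element of $D_x$ lies above one of them. Granting this, an element of $E_2\bigl(\bigoplus_kP_k\bigr)(x)=\varprojlim\bigl(\bigoplus_kP_k\bigr)|_{D_x}$ is a compatible family $(v^{(u)})_{u\in D_x}$ with each $v^{(u)}=(v^{(u)}_k)_{k\in K}$ finitely supported; setting $S:=\bigcup_{i=1}^m\operatorname{supp}(v^{(u_i)})$, which is finite, and using that for $k\notin S$ any $u\in D_x$ dominates some $u_i$ and hence $v^{(u)}_k=P_k(u_i\le u)(v^{(u_i)}_k)=0$, one concludes that the whole family is supported on $S$, i.e.\ it comes from $\bigoplus_{k\in S}\varprojlim P_k|_{D_x}=\bigoplus_{k\in S}E_2(P_k)(x)$ (a finite direct sum commuting with a limit). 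This is exactly surjectivity of the canonical comparison map $\bigoplus_kE_2(P_k)(x)\to E_2\bigl(\bigoplus_kP_k\bigr)(x)$; injectivity is immediate from the componentwise description, and the isomorphisms at the various $x$ are compatible with the structure maps, so they assemble into an isomorphism of functors. Combining the three steps gives $E(M)\cong\bigoplus_{k\in K}I_{\langle a_k,b_k\rangle_{BL}}$.

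The finiteness hypothesis is what keeps all of this inside $\cat{vec}^{\Z^{\op}\times\Z}$: by Lemma \ref{lemma_extension_functor_explicit} the value $E(M)(x)$ is a limit or colimit of $M$ restricted to a sub-zigzag of the form $\langle a,b\rangle_{ZZ}$, so the assumption that every $\varprojlim M|_{\langle a,b\rangle_{ZZ}}$ and $\varinjlim M|_{\langle a,b\rangle_{ZZ}}$ is finite dimensional guarantees that $E(M)$ is pointwise finite dimensional, equivalently that only finitely many of the blocks $\langle a_k,b_k\rangle_{BL}$ meet any given point — which is also precisely what makes $\bigoplus_{k\in K}I_{\langle a_k,b_k\rangle_{BL}}$ an object of $\cat{vec}^{\Z^{\op}\times\Z}$ to begin with. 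I expect the main obstacle to be not any single computation but keeping the bookkeeping of the threefold composite straight, in particular justifying the pointwise limit formula for $E_2$ and checking that each up-set $D_x$ is generated from below by a finite antichain; once these are in place, the direct-sum argument above is routine.
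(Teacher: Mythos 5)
Your proposal is correct, but it takes a genuinely different route from the paper: the paper proves this lemma by a one-line appeal to Lemma E.9 of \cite{kim2021arXivV4} (present only in arXiv version~4), whereas you give a self-contained argument. Your decomposition of the problem is the right one: $E_1$ preserves arbitrary direct sums because a left Kan extension is left adjoint to restriction, restriction to $U$ is pointwise, and the only substantive step is that the right Kan extension $E_2$ commutes with the (possibly infinite) direct sum. That step checks out as you set it up: for $x=(a,b)\notin U$ the up-set $D_x$ has exactly the diagonal points $(m,m)$ with $b\le m\le a$ as its finitely many minimal elements, every $u\in D_x$ dominates one of them, and your finite-support argument then gives surjectivity (injectivity being clear) of the canonical comparison $\oplus_k E_2(P_k)(x)\to E_2(\oplus_k P_k)(x)$, naturally in $x$; combined with the preceding lemma identifying $E(I_{\langle a_k,b_k\rangle_{ZZ}})$ with $I_{\langle a_k,b_k\rangle_{BL}}$, this yields the claim. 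What the citation buys the paper is brevity; what your proof buys is independence from an external, unpublished appendix and a clear localization of where the finiteness hypothesis enters (only to keep everything pointwise finite dimensional, i.e.\ inside $\cat{vec}$, as you note). One possible streamlining: rather than analyzing the comma categories of $E_2$ directly, you could invoke Lemma \ref{lemma_extension_functor_explicit}, which already expresses $E(M)(a,b)$ as a colimit over the finite interval $[a,b]$ or a limit over the finite interval $[b,a]$; colimits always commute with direct sums, and limits over these finite zigzags commute with direct sums by the same finite-support reasoning, so the pointwise identification follows at once, leaving only the (identical) naturality check for the structure maps.
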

\begin{proof}
	This is a consequence of Lemma E.9 in \cite{kim2021arXivV4} (only in version four).
\end{proof}
The next lemma shows how to actually calculate the components of $E(M)$ and is crucial for the proof of the stability of persistence landscapes. In Figure \ref{fig:extension_functor_explicit}, one can see how the components of $E(M)$ for a zigzag module $M$ are calculated.
\begin{figure}
	\centering
	\begin{subfigure}{0.5\textwidth}
		\centering
		\includegraphics[width=\textwidth]{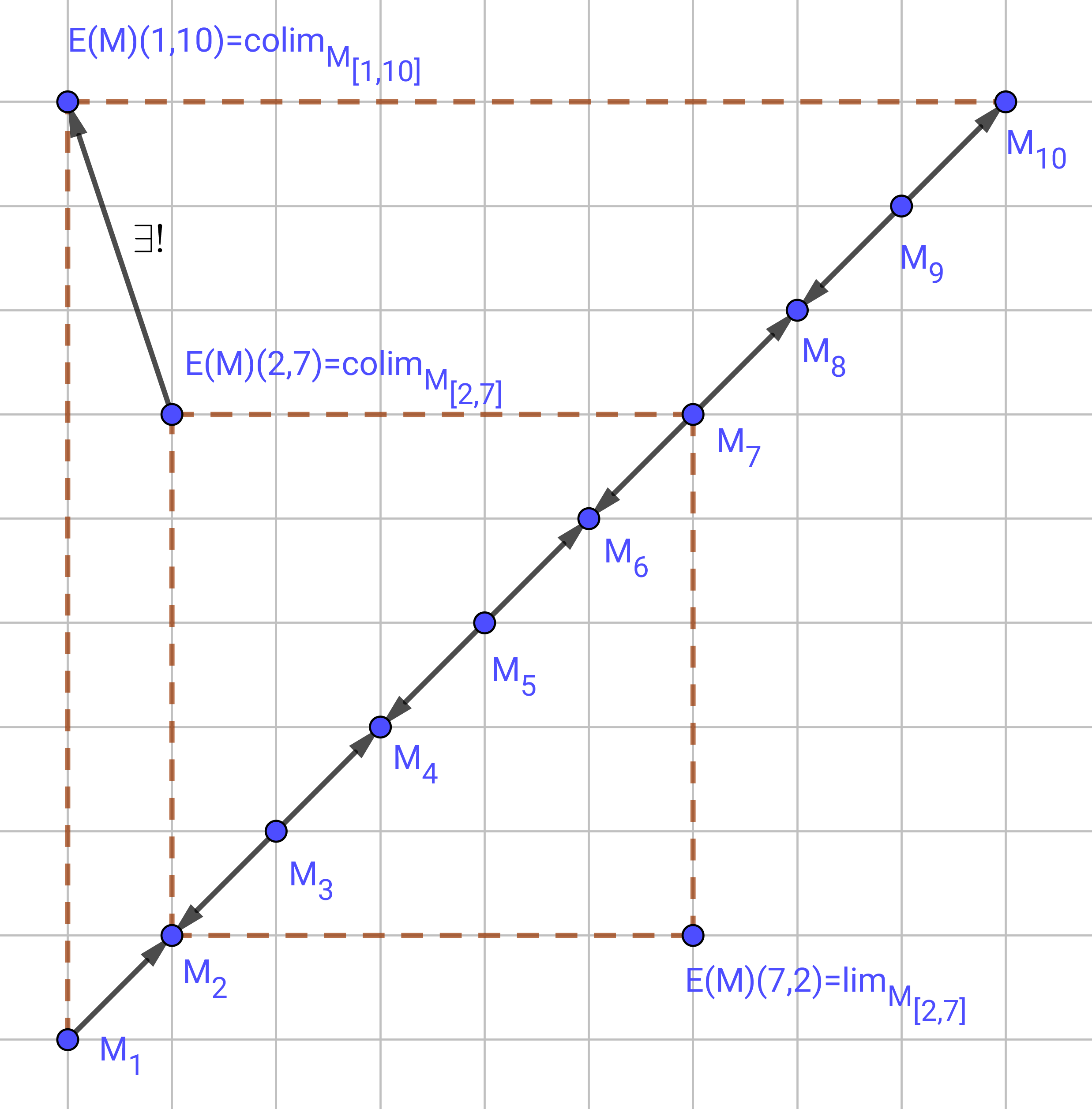}
		\subcaption{}
	\end{subfigure}\quad
	\begin{subfigure}{0.4\textwidth}
		\centering
		\includegraphics[width=0.8\textwidth]{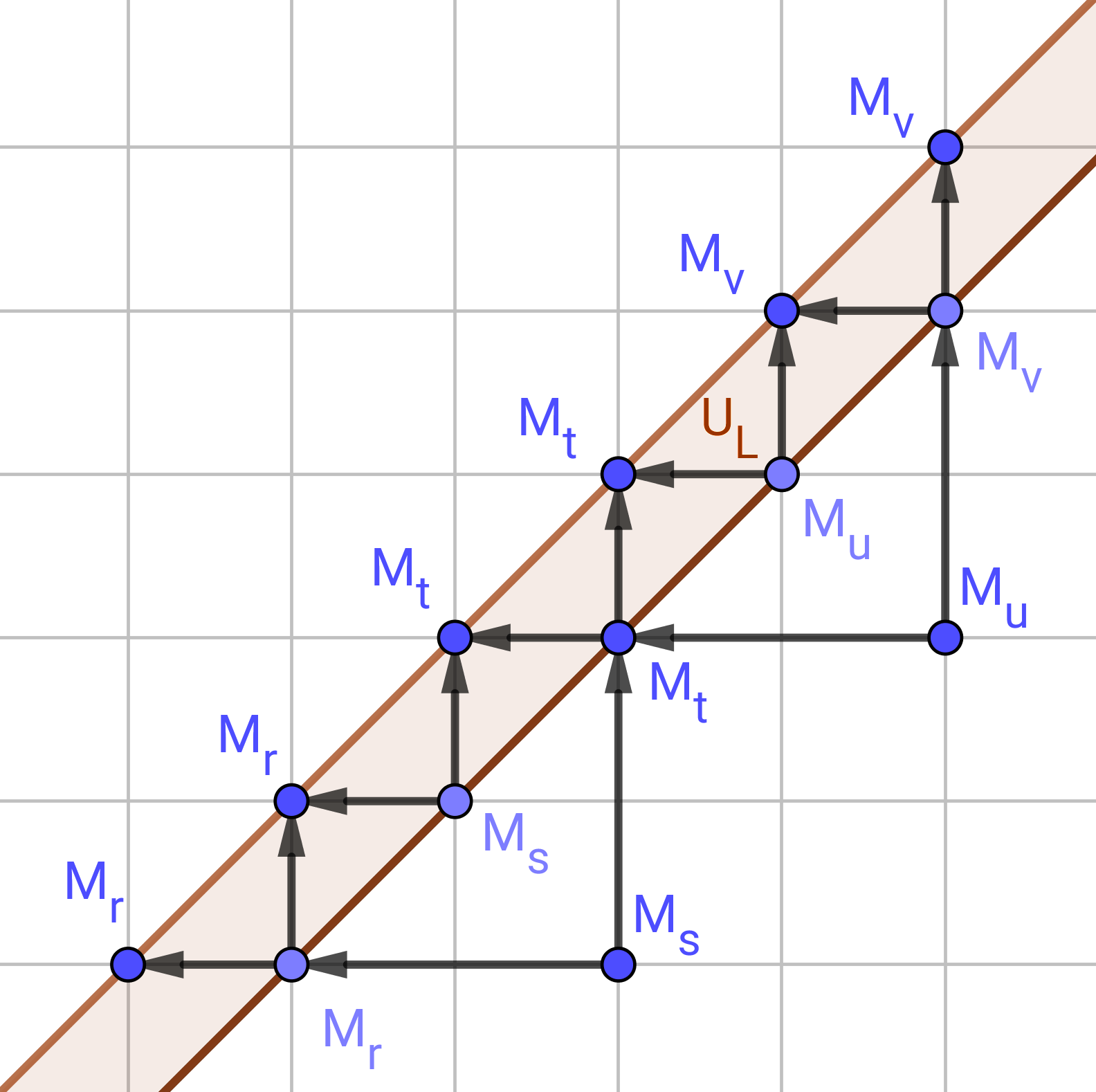}
		\subcaption{}\label{subfig:U_L}
	\end{subfigure}
	
	\caption{(a) A visualization of the block extension functor and Lemma \ref{lemma_extension_functor_explicit}. The unique map is given by the universal property of colimits. Cf. Figure 2 in \cite{botnan2018}. \\(b) The set $U_L$. }\label{fig:extension_functor_explicit}
\end{figure}
\begin{lemma}\label{lemma_extension_functor_explicit}
	For $(a,b)\in\Z^\op\times\Z$, it holds that
	\begin{align*}
		E(M)(a,b) = \begin{cases}
			\colimit M|_{[a,b]} & \mbox{ for } a\leq b,\\
			\limit M|_{[b,a]} & \mbox{ for } a> b.
		\end{cases}
	\end{align*}
	Furthermore, the structure maps of $E(M)$ are the maps given by the universal properties of limits and colimits, respectively.
\end{lemma}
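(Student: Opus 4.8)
The plan is to compute the left Kan extension $E_1$, the restriction to $U$, and the right Kan extension $E_2$ separately, using the pointwise formulas for Kan extensions along functors into posets, and then to track what happens to a point $(a,b)\in\Z^\op\times\Z$ under the composite $E = E_2\circ(-)|_U\circ E_1$. First I would recall that, since $\iota:ZZ\hookrightarrow\Z^\op\times\Z$ is fully faithful on its image and the target is a poset, the pointwise left Kan extension at a point $p\in\Z^\op\times\Z$ is the colimit of $M$ over the comma category $\iota\downarrow p$, which, identifying $ZZ$ with its image, is just the subposet $\{i\in ZZ : \iota(i)\le p\}$. Likewise the pointwise right Kan extension $E_2$ at $(a,b)$ is the limit over $\{u\in U : (a,b)\le u\}$, equivalently the limit of $(E_1M)|_U$ over that up-set. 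So the whole computation reduces to identifying these down-sets and up-sets explicitly in terms of the zigzag poset.

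The key geometric observations are: (1) under $\iota$, a sink index $i$ maps to the diagonal point $(i,i)$ and a source index $j$ to $(j+1,j-1)$, which sits strictly below the diagonal, and the order $\iota(i)\le\iota(k)$ in $\Z^\op\times\Z$ recovers exactly the zigzag order on $ZZ$; (2) the up-closure $\{u\in U : (a,b)\le u\}$ inside $U = \{i\le j\}$ is the ``upper triangle'' $\{(x,y): x\le a,\ b\le y,\ x\le y\}$, and for this region the structure maps of a $\Z^\op\times\Z$-module (restricted to $U$) are all directed away from $(a,b)$, so the limit over it is controlled by the minimal element — but one must be careful, since $(a,b)$ itself need not lie in $U$ (that is the case $a>b$). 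I would split into the two cases. For $a\le b$: then $(a,b)\in U$, it is the minimum of the up-set, and by the standard fact that a limit over a directed-down diagram with a least element is that least element's value, $E_2((E_1M)|_U)(a,b) = (E_1M)(a,b)$; and $(E_1M)(a,b) = \colimit_{\{i : \iota(i)\le(a,b)\}} M$. Here I would check that $\{i\in ZZ : \iota(i)\le(a,b)\}$ equals exactly the interval $[a,b]_{ZZ}$ of the zigzag poset (the indices whose image lies weakly between $(a,a)$ and $(b,b)$), giving $E(M)(a,b)=\colimit M|_{[a,b]}$. For $a>b$: then $(a,b)\notin U$; the up-set $\{u\in U : (a,b)\le u\}$ has minimal element $(b,b)$ (the nearest diagonal point dominating $(a,b)$ in the $\Z^\op\times\Z$ order), and again the limit collapses to the value at that minimum, namely $(E_1M)(b,b)=\colimit M|_{\{i:\iota(i)\le(b,b)\}}$. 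But $\colimit M|_{[b,b]}$ over a poset with a single element is just $M_b$... — so I need to instead observe that for $a>b$ the right Kan extension genuinely takes a limit over a nontrivial diagram, and reinterpret: the up-set in $U$ of $(a,b)$ with $a>b$ is all $(x,y)$ with $x\le a$, $y\ge b$, $x\le y$, whose \emph{limit} is the limit of $E_1M$ over this set, and by cofinality this is the limit of $M$ over $\{i\in ZZ : (b,b)\le\iota(i)\le(a,a)\} = [b,a]_{ZZ}$, i.e. $E(M)(a,b)=\limit M|_{[b,a]}$. The statement about structure maps being the universal maps is then immediate, because the structure maps of a pointwise Kan extension are by construction the comparison maps between the relevant (co)limits.

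The main obstacle, and the step I would spend the most care on, is the $a>b$ case: one must verify that the down-set $\{u\in U : (a,b)\le u\}$ really does have the zigzag interval $[b,a]_{ZZ}$ (via $\iota$) as a cofinal/initial subdiagram so that the right-Kan-extension limit over $U$ equals $\limit M|_{[b,a]}$ rather than something larger, and simultaneously that in the $a\le b$ case the limit defining $E_2$ collapses to a single point because $(a,b)$ is already in $U$. Both hinge on a careful bookkeeping of which zigzag indices embed into a given box $\{(x,y):x\le a,\ y\le b\}$ or its complement, together with the elementary fact that limits (resp. colimits) over a diagram with an initial (resp. terminal) object are computed at that object. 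Everything else — functoriality, identification of the comma categories, compatibility of structure maps — is routine once the two index-set identifications are in place, so I would present those identifications as explicit lemmas and then read off the formula. As a sanity check I would confirm the formula is consistent with the previously stated lemma $E(I_{\langle a,b\rangle_{ZZ}}) = I_{\langle a,b\rangle_{BL}}$ on each of the four interval types.
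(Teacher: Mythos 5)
Your overall route is the same as the paper's: compute $E_1$ pointwise as a colimit over the down-set $\{i\in\Z:\iota(i)\le(a,b)\}$, note that $E_2$ is computed pointwise as a limit over the up-set of $(a,b)$ inside $U$, and reduce both to zigzag intervals by an initiality/finality argument; the collapse in the case $a\le b$ because $(a,b)$ is the minimum of its up-set in $U$ is also how the paper implicitly argues. The problem is that the two index-set identifications -- the step you yourself flag as the one needing the most care -- are stated incorrectly, and that is exactly where the content of the lemma sits. First, $\{i:\iota(i)\le(a,b)\}$ is \emph{not} ``exactly'' $[a,b]$: if both $a$ and $b$ are sink indices it equals $[a-1,b+1]$ (for instance $\iota(a-1)=(a,a-2)\le(a,b)$ since $a-1$ is then a source), and it coincides with the plain interval $[a,b]$ only when both endpoints are source indices. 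The paper therefore runs a four-case parity analysis and then invokes that $[a,b]$ is an upper fence of the down-set (Proposition~\ref{prop_iso_limcolim}) to replace the colimit; your proposed ``check that the sets are equal'' would simply fail, and without the fence step the case $a\le b$ is incomplete (though easily repaired by the same cofinality reasoning you use elsewhere).

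Second, and more seriously, in the case $a>b$ the subdiagram you pass to, $\{i:(b,b)\le\iota(i)\le(a,a)\}=[b,a]_{ZZ}$, excludes source endpoints (if $b$ is a source then $\iota(b)=(b+1,b-1)\not\ge(b,b)$), and its limit is in general \emph{not} $\limit M|_{[b,a]}$: when $b$ is a source with structure map $f=M(b\le b+1)$, a section over the plain interval $[b,a]$ carries the extra datum of a lift $v_b$ with $f(v_b)=v_{b+1}$, so $\limit M|_{[b,a]}\not\cong\limit M|_{[b+1,a]}$ unless $f$ is an isomorphism. The correct value of $E(M)(a,b)$ is the one with the source endpoint included -- note that the up-set always contains $(b,b)$ and $E_1(M)(b,b)=M_b$ even when $b$ is a source -- so carrying out your cofinality step onto $[b,a]_{ZZ}$ literally would produce the wrong formula at source endpoints, even though the formula you ultimately write down is the right one. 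What is missing is the paper's two-step fence argument: take the diagonal strip $U_L=\{(i,i),(i,i+1)\}$, on which $E_1(M)$ is just $M$ reindexed ($E_1(M)(i,i)=M_i$, $E_1(M)(i,i+1)=M_i$ or $M_{i+1}$); observe that $S\cap U_L$ is a lower fence of the up-set $S$, compute it case by case (it is $[b-1,a+1]$, $[b,a+1]$, $[b,a]$ or $[b-1,a]$ depending on the parities of $a$ and $b$); and finally note that the plain interval $[b,a]$ is a lower fence of each of these. Your sketch gestures at cofinality but with the wrong target diagram, and it never verifies the values of $E_1(M)$ near the diagonal, which is what legitimizes trading a limit of $E_1(M)$ for a limit of $M$.
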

\begin{proof}
	We first proof the part $E(M)(a,b)=\colimit M|_{[a,b]}$ for $a\leq b$. By definition of left Kan extensions, 
	\begin{align*}
		E(M)(a,b)=\colimit M|_{\{i\in\Z|\iota(i)\leq (a,b)\in\Z^\op\times\Z\}}.
	\end{align*}
	By definition of the inclusion $\iota:ZZ\hookrightarrow \Z^\op \times\Z$, we have
	\begin{align*}
		\{i\in\Z|\iota(i)\leq (a,b)\in\Z^\op\times\Z\} =\begin{cases}
			[a,b] & \mbox{if both $a$ and $b$ are source indices},\\
			[a,b+1] & \mbox{if $a$ is a source and $b$ is a sink index},\\
			[a-1,b+1] & \mbox{if both $a$ and $b$ are sink indices},\\
			[a-1,b] & \mbox{if $a$ is a sink and $b$ is a source index}.
		\end{cases}
	\end{align*}
	In each of this four cases $[a,b]$ is an upper fence of the respective interval. Using Proposition \ref{prop_iso_limcolim} we obtain 
	\begin{align*}
		E(M)(a,b)=\colimit M|_{\{i\in\Z|\iota(i)\leq (a,b)\in\Z^\op\times\Z\}}=\colimit M|_{[a,b]}.
	\end{align*}
	It remains to show that $E(M)(a,b)=\limit M|_{[b,a]}$ for $a>b$. We again use the observation in Proposition \ref{prop_iso_limcolim} that limits only depend on lower fences of posets. One lower fence of the poset $U$ is given by the set $U_L:=\{(i,j)|i\in\Z,j=i \mbox{ or }j=i+1\}$. We have that $E(M)(i,i)=M_i$ and
	\begin{align*}
		E(M)(i,i+1)=\begin{cases}
			M_i & \mbox{if $i$ is a sink index},\\
			M_{i+1} & \mbox{if $i$ is a source index}.
		\end{cases}
	\end{align*}
	See also Subfigure \ref{subfig:U_L}. By definition, we have
	\begin{align*}
		E(M)(a,b)=\limit (E_1(M)|_U)|_{S},
	\end{align*}
	with $S:=\{(i,j)\in U|(i,j)\geq (a,b)\}$. Observe that $S\cap U_L$ is a lower fence of $S\cap U$ and so 
	\begin{align*}
		E(M)(a,b)=\limit (E_1(M)|_U)|_{S}=\limit E_1(M)|_{S\cap U_L}.
	\end{align*}
	We calculate that 
	\begin{align*}
		S\cap U_L =\begin{cases}
			[b-1,a+1] & \mbox{if both $a$ and $b$ are source indices},\\
			[b,a+1] & \mbox{if $a$ is a source and $b$ is a sink index},\\
			[b,a] & \mbox{if both $a$ and $b$ are sink indices},\\
			[b-1,a] & \mbox{if $a$ is a sink and $b$ is a source index}.
		\end{cases}
	\end{align*}
	In each of these cases the interval $[b,a]$ is a lower fence and thus,
	\begin{align*}
		E(M)(a,b)=\limit M|_{[b,a]}.
	\end{align*}
\end{proof}

\subsection{Interleaving distance for extended zigzag modules}
We consider an extended zigzag module as a functor $M:ZZ\times \Z\to\cat{vec}$. In the following, by $M(\cdot,z)$ we mean the zigzag module $M|_{\{(x,z):x\in ZZ\}}$ and by $\mathcal{E}(M)(\cdot,\cdot,z)$ we mean the $\Z^\op\times \Z$-indexed module $\mathcal{E}(M)|_{\{(x,y,z):x\in\Z^\op,y\in\Z\}}$. We define a functor $\mathcal{E}:\cat{vec}^{ZZ\times\Z}\to\cat{vec}^{\Z^\op\times\Z\times\Z}$ as follows
\begin{enumerate}
	\item On objects: for an extended zigzag module $M\in\cat{vec}^{ZZ\times\Z}$ we define $\mathcal{E}(M)(\cdot,\cdot,z):=E((M)(\cdot,z))$. The vertical structure maps in $M$ can be seen as morphisms between the zigzag modules $M(\cdot,z_1)$ and $M(\cdot,z_2)$ for $z_1\leq z_2$. Hence, they induce morphisms $\mathcal{E}(M)(\cdot,\cdot,z_1)\to\mathcal{E}(M)(\cdot,\cdot,z_2)$, which themselves become the structure maps of $\mathcal{E}(M)$.
	\item On morphisms: given a morphism between extended zigzag modules $f:M\to N$, for each $z\in\Z$ we implicitly have a morphism between zigzag modules $f_z:M(\cdot,z)\to N(\cdot,z)$. Again by universality of limits and colimits we obtain a morphism $E(f_z):E(M(\cdot,z))\to E(N(\cdot,z))$ for each $z$. It remains to show that this is indeed a natural transformation, i.e. that the square commutes for all $z_1\leq z_2$
	\[	\begin{tikzcd}
		E(M(\cdot,z_1)) & E(M(\cdot,z_2)) \\
		E(N(\cdot,z_1)) & E(N(\cdot,z_2)),
		\arrow[from=1-1, to=1-2]
		\arrow["E(f_{z_1})",from=1-1, to=2-1]
		\arrow[from=2-1, to=2-2]
		\arrow["E(f_{z_2})",from=1-2, to=2-2]
	\end{tikzcd}
	\] 
	which again is a consequence of the universality of limits and colimits.
\end{enumerate}
Alternatively, one could define the functor $\mathcal{E}$ in analogy to the block extension functor $E$. For this, consider the inclusion $\tilde{\iota}:ZZ\times \mathbb{Z}\hookrightarrow \Z^\op\times\Z\times \Z$ defined as 
\[\tilde{\iota}(i,z)=\begin{cases}
	(i,i,z) & \mbox{if $i$ is a sink index,} \\
	(i+1,i-1,z)& \mbox{if $i$ is a source index}.
\end{cases}
\]
In this case, by source index we mean that $i$ is a source index in $ZZ$ and by sink index that $i$ is a sink index in $ZZ$. Now, regard the set \[\tilde{U}:=\{(x,y,z)\in\Z^\op\times\Z\times\Z|x\leq y\},\]
as well as the inclusion $\tilde{\kappa}:\tilde{U}\hookrightarrow \Z^\op\times\Z\times\Z$. Then, $\mathcal{E}$ can be defined as the composition of three functors
\begin{align*}
	\mathcal{E}:= \mathrm{Ran}_{\tilde{\kappa}}\circ (-)|_{\tilde{U}}\circ\mathrm{Lan}_{\tilde{\iota}}:\cat{vec}^{ZZ\times\Z}\to\cat{vec}^{\Z^\op\times\Z\times\Z}.
\end{align*}
To see that $\mathcal{E}(M)(\cdot,\cdot,z)=E(M(\cdot,z))$ it is again sufficient to recall the considerations in Lemma \ref{lemma_extension_functor_explicit} and the fact that limits and colimits depend only on lower and upper fences, respectively. Indeed, we get the explicit formulation
\begin{align*}
	\mathcal{E}(M)(x,y,z) := \begin{cases}
		\colimit M(\cdot,z)|_{[x,y]} & \mbox{ for } x\leq y,\\
		\limit M(\cdot,z)|_{[y,x]} & \mbox{ for } x> y.
	\end{cases}
\end{align*}
The structure maps are, again, maps that are given by the universal properties of limits and colimits.\\
The subsequent proposition is an extension of Prop. 4.3 in \cite{botnan2018} and it will not be used anywhere else in this paper. We state it for sake of completeness.
\begin{proposition}
	The functor $\mathcal{E}$ is fully faithful, i.e. the set of morphisms from $M$ to $N$ is isomorphic to the set of morphisms from $\mathcal{E}(M)$ to $\mathcal{E}(N)$ for all extended zigzag modules $M$ and $N$.
\end{proposition}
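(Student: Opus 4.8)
The plan is to reduce the statement to the corresponding fact about the ordinary block extension functor $E$, exploiting that $\mathcal{E}$ is just $E$ applied slice-by-slice in the time variable. The first step is to make the product structure explicit: since $(ZZ\times\Z,\ll)$ is the product of the zigzag poset $(ZZ,\ll_{ZZ})$ with the chain $(\Z,\le)$, currying the last coordinate gives isomorphisms of categories $\cat{vec}^{ZZ\times\Z}\cong(\cat{vec}^{ZZ})^{\Z}$ and $\cat{vec}^{\Z^\op\times\Z\times\Z}\cong(\cat{vec}^{\Z^\op\times\Z})^{\Z}$, under which an extended zigzag module $M$ corresponds to the $\Z$-diagram $z\mapsto M(\cdot,z)$ of zigzag modules (with transition maps induced by the vertical structure maps of $M$), and a morphism $f\colon M\to N$ corresponds to the family $(f_z)_{z\in\Z}$ of its time-slices. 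The definition of $\mathcal{E}$ on objects and on morphisms — $\mathcal{E}(M)(\cdot,\cdot,z)=E(M(\cdot,z))$, with vertical structure maps and morphism-components obtained by applying $E$ — then says exactly that, under these identifications, $\mathcal{E}$ is post-composition with $E$, i.e.\ $\mathcal{E}\cong E\circ(-)$.

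Given this, the second step is to recall that $E\colon\cat{vec}^{ZZ}\to\cat{vec}^{\Z^\op\times\Z}$ is itself fully faithful; this is the $\Z$-indexed version of Proposition 4.3 of \cite{botnan2018} (it is also contained in Appendix E of \cite{kim2021arXivV4}), whose proof carries over unchanged to discrete indexing. The third step is the purely formal observation that post-composition with a fully faithful functor is again fully faithful. Concretely, for extended zigzag modules $M,N$ and a morphism $\varphi\colon\mathcal{E}(M)\to\mathcal{E}(N)$, full faithfulness of $E$ yields, for each $z\in\Z$, a unique morphism $f_z\colon M(\cdot,z)\to N(\cdot,z)$ with $E(f_z)=\varphi_z$; the squares expressing naturality of $(f_z)_z$ in $z$ commute because their images under $E$ are the (commuting) naturality squares of $\varphi$ and $E$ is faithful, so $f:=(f_z)_z$ is a morphism $M\to N$ with $\mathcal{E}(f)=\varphi$. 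Uniqueness of $f$, hence injectivity of $\mathcal{E}$ on hom-sets, is faithfulness of $E$ once more.

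The only genuinely substantive ingredient is thus the full faithfulness of the ordinary block extension functor $E$, which I would cite rather than reprove. The sole point needing a little care is the verification in the first step that $(ZZ\times\Z,\ll)$ really is the product poset $(ZZ,\ll_{ZZ})\times(\Z,\le)$ — so that the currying isomorphisms, and with them the identification $\mathcal{E}\cong E\circ(-)$, are legitimate — together with the accompanying check that the vertical structure maps of $\mathcal{E}(M)$ are indeed $E$ applied to those of $M$, which is immediate from the way $\mathcal{E}$ was defined on objects and morphisms.
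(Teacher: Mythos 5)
Your proposal is correct, but it is organized genuinely differently from the paper's proof. The paper argues directly at the three-parameter level: it writes $\mathcal{E}=\mathrm{Ran}_{\tilde{\kappa}}\circ(-)|_{\tilde{U}}\circ\mathrm{Lan}_{\tilde{\iota}}$, observes that full faithfulness of $(-)|_{\tilde{U}}\circ\mathrm{Lan}_{\tilde{\iota}}$ follows as in Prop.~4.3 of \cite{botnan2018}, and then shows that $\mathrm{Ran}_{\tilde{\kappa}}$ is fully faithful because it is right adjoint to the restriction $(-)|_{\tilde{U}}$ and the counit $(-)|_{\tilde{U}}\circ\mathrm{Ran}_{\tilde{\kappa}}(M)\cong M$ is an isomorphism, so that $\mathcal{E}$ is a composite of fully faithful functors. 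You instead curry out the time coordinate, identify $\mathcal{E}$ with post-composition by the one-slice block extension functor $E$ under $\cat{vec}^{ZZ\times\Z}\cong(\cat{vec}^{ZZ})^{\Z}$, and invoke the formal fact that post-composition with a fully faithful functor is fully faithful; your verification that the lifted components $(f_z)_z$ are natural in $z$ via faithfulness of $E$ is exactly right, and your caveat about $(ZZ\times\Z,\ll)$ being the product of the zigzag poset with $(\Z,\le)$ is well placed, since the paper's slice-wise definition of $\mathcal{E}$ presupposes precisely this. What your route buys is that no Kan-extension argument needs to be redone in three parameters; what it presupposes is full faithfulness of $E$ itself, and there your citation is slightly off: Prop.~4.3 of \cite{botnan2018} covers (the analogue of) $(-)|_{U}\circ\mathrm{Lan}_{\iota}$ only, whereas the $E$ of this paper also includes the final right Kan extension $E_2=\mathrm{Ran}_{\kappa}$ to all of $\Z^\op\times\Z$. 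To complete your second step you therefore still need the counit argument for $\mathrm{Ran}_{\kappa}$ (identical to the one the paper gives for $\mathrm{Ran}_{\tilde{\kappa}}$, via Theorem~IV.3.1 of \cite{maclane_categories}) or a precise reference for it; this is a one-line addition rather than a genuine gap, but as written the reduction rests on a statement that is not quite what \cite{botnan2018} proves.
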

\begin{proof}
	The first part of the proof that $(-)|_{\tilde{U}}\circ \mathrm{Lan}_{\tilde{\iota}}$ is fully faithful is analogue to the proof of Prop. 4.3 in \cite{botnan2018}. We formulate the dual arguments to show that $\mathrm{Ran}_{\tilde{\kappa}}$ is fully faithful. It is known that the right Kan extension is right adjoint to the restriction functor $(-)|_{\tilde{U}}$ (see \cite{riehl_categorical}, (1.1)). By Theorem IV.3.1 in \cite{maclane_categories}, a right adjoint is fully faithful if and only if the counit of the adjunction is a natural isomorphism. This is easy to see since $(-)|_{\tilde{U}}\circ \mathrm{Ran}_{\tilde{\kappa}}(M)\cong M$ for any extended zigzag module $M$. In total, $\mathcal{E}$ is fully faithful as a composition of fully faithful functors.
\end{proof}
The following definitions are standard in the theory of multiparameter persistent homology and are adapted to our setting. Let $v=(v_1,v_2,v_3)\in\Z^\op\times\Z\times\Z$ be such that $v_1\leq 0$ and $v_2,v_3\geq 0$.\\
\begin{definition}(Shift functors) \\
	The $v$-shift functor $(-)_v:\cat{vec}^{\Z^\op\times\Z\times\Z}\to\cat{vec}^{\Z^\op\times\Z\times\Z}$ is defined as follows:
	\begin{enumerate}
		\item For $F\in\cat{vec}^{\Z^\op\times\Z\times\Z}$, we define $F_v\in\cat{vec}^{\Z^\op\times\Z\times\Z}$ in the following way: $F_v(x)=F(x+v)$ for all $x\in\Z^\op\times\Z\times\Z$ and $F_v(x\leq x'):=F(x+v\leq x'+v)$ for all $x\leq x'\in \Z^\op\times\Z\times\Z$ and all structure maps.
		\item Let $F,G\in \cat{vec}^{\Z^\op\times\Z\times\Z}$. For any morphism $\eta:F\to G$, the corresponding morphism $\eta_v:F_v\to G_v$ is defined as $\eta_v(x)=\eta(x+v):F_v(x)\to G_v(x)$ for all $x\in\Z^\op\times\Z\times\Z$.
	\end{enumerate}
\end{definition}
\begin{definition}($v$-interleaving)\\
	We say that $F,G\in\cat{vec}^{\Z^\op\times\Z\times\Z}$ are $v$-interleaved if there are natural transformations $\eta:F\to G_v$ and $\nu:G\to F_v$ such that 
	\begin{enumerate}
		\item $\nu_v \circ\eta = \psi_F^{2v}$,
		\item $\eta_v\circ \nu= \psi_G^{2v}$,
	\end{enumerate}
	where $\psi_F^{v}:F\to F_v$ denotes the natural transformation whose restriction to $F(x)$ is the linear map $F(x\leq x+v)$.
\end{definition}
Finally, we can define the interleaving distance for extended zigzag modules.
\begin{definition}(Interleaving distance)\\
	Let $\vec{\epsilon}=\epsilon(-1,1,1)$ with $\epsilon\geq 0$ and let further $F,G\in\cat{vec}^{\Z^\op\times\Z\times\Z}$. The interleaving distance between $F$ and $G$ is defined as
	\begin{align*}
		d_I(F,G):=\inf \{\epsilon\geq 0:\; F,G \mbox{ are $\vec{\epsilon}$-interleaved}\}
	\end{align*}
	and as $d_I(F,G)=\infty$ if there is no $\vec{\epsilon}$-interleaving. \\
	For extended zigzag modules $M,N$ we define the interleaving distance as
	\begin{align*}
		d_I(M,N):=d_I(\mathcal{E}(M),\mathcal{E}(N)).
	\end{align*} 
\end{definition}
Having the necessary definitions, we state the main result of this section.
\begin{theorem}(Stability of spatiotemporal persistence landscapes)\\
	Let $M,N$ be extended zigzag modules. It holds
	\begin{align*}
		d_\lambda^\infty(M,N)\leq d_I(M,N).
	\end{align*}
\end{theorem}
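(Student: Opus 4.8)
The plan is to translate the generalized rank of $M$ over a region $R_x^\epsilon$ into the rank of a single structure map of the block-extended module $\mathcal{E}(M)$, and then to run the standard argument by which an interleaving controls rank-invariant landscapes, exactly as was done for one- and multiparameter persistence landscapes \cite{bubenik2015,vipond2020}. Throughout, write $\lambda_k^M,\lambda_k^N$ for the $k$-th landscapes of $M$ and $N$. For $x=(a_0,z_0)\in ZZ\times\Z$ and $\epsilon\ge 0$ set $u_\epsilon:=(a_0+\epsilon,a_0-\epsilon,z_0-\epsilon)$ and $v_\epsilon:=(a_0-\epsilon,a_0+\epsilon,z_0+\epsilon)$ in $\Z^\op\times\Z\times\Z$; then $u_\epsilon\le v_\epsilon$ and $v_\epsilon-u_\epsilon=2\epsilon(-1,1,1)$. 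The first and central step is to prove
\[
\rank\bigl(M|_{R_x^\epsilon}\bigr)=\rank\bigl(\mathcal{E}(M)(u_\epsilon\le v_\epsilon)\bigr).
\]
For this I would use that the spatial-scale coordinate of $ZZ\times\Z$ is totally ordered with structure maps pointing upward: consequently the bottom level $\{z=z_0-\epsilon\}$ of $R_x^\epsilon$ is a lower fence of $R_x^\epsilon$ whose underlying diagram is the zigzag module $M(\cdot,z_0-\epsilon)|_{[a_0-\epsilon,a_0+\epsilon]}$, and the top level $\{z=z_0+\epsilon\}$ is an upper fence whose diagram is $M(\cdot,z_0+\epsilon)|_{[a_0-\epsilon,a_0+\epsilon]}$. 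By Proposition \ref{prop_iso_limcolim}, $\limit M|_{R_x^\epsilon}$ and $\colimit M|_{R_x^\epsilon}$ then agree, via rank-preserving isomorphisms, with the limit and colimit of these two zigzag modules, which by Lemma \ref{lemma_extension_functor_explicit} are precisely $\mathcal{E}(M)(u_\epsilon)$ and $\mathcal{E}(M)(v_\epsilon)$. A diagram chase through the explicit descriptions of limits, colimits and section (co)extensions (Propositions \ref{prop_limColimInVec} and \ref{prop_iso_limcolim}) and the construction of the structure maps of $\mathcal{E}$ then identifies the canonical limit-to-colimit map $\psi_{M|_{R_x^\epsilon}}$ with $\mathcal{E}(M)(u_\epsilon\le v_\epsilon)$ up to those isomorphisms, establishing the displayed identity.

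Next I would bring in the interleaving. Let $\delta\ge 0$ be such that $\mathcal{E}(M)$ and $\mathcal{E}(N)$ are $\vec{\delta}$-interleaved for $\vec{\delta}=\delta(-1,1,1)$, via $\eta\colon\mathcal{E}(M)\to\mathcal{E}(N)_{\vec{\delta}}$ and $\nu\colon\mathcal{E}(N)\to\mathcal{E}(M)_{\vec{\delta}}$; if no such $\delta$ exists then $d_I(M,N)=\infty$ and there is nothing to prove. For any $a\le b$ with $a+\vec{\delta}\le b-\vec{\delta}$, naturality of $\nu$ together with the interleaving identity $\nu_{\vec{\delta}}\circ\eta=\psi_{\mathcal{E}(M)}^{2\vec{\delta}}$ gives the factorization
\[
\mathcal{E}(M)(a\le b)=\nu_{b-\vec{\delta}}\circ\mathcal{E}(N)(a+\vec{\delta}\le b-\vec{\delta})\circ\eta_a ,
\]
so that $\rank\mathcal{E}(M)(a\le b)\le\rank\mathcal{E}(N)(a+\vec{\delta}\le b-\vec{\delta})$, and symmetrically with $M$ and $N$ interchanged. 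Applying this with $a=u_\epsilon$, $b=v_\epsilon$ for $\epsilon\ge\delta$ (which is exactly what is needed for $a+\vec{\delta}\le b-\vec{\delta}$), and noting $u_\epsilon+\vec{\delta}=u_{\epsilon-\delta}$ and $v_\epsilon-\vec{\delta}=v_{\epsilon-\delta}$, the identity of the first step yields, for all $x$ and all $\epsilon\ge\delta$,
\[
\rank\bigl(M|_{R_x^\epsilon}\bigr)\le\rank\bigl(N|_{R_x^{\epsilon-\delta}}\bigr)\quad\text{and}\quad\rank\bigl(N|_{R_x^\epsilon}\bigr)\le\rank\bigl(M|_{R_x^{\epsilon-\delta}}\bigr).
\]

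Finally I would conclude as in the one-parameter case. Fix $k$ and $x$; by Lemma \ref{lemma_orderReverse} the map $\epsilon\mapsto\rank(M|_{R_x^\epsilon})$ is non-increasing, so $\{\epsilon\ge 0:\rank(M|_{R_x^\epsilon})\ge k\}$ is a down-set with supremum $\lambda_k^M(x)$. Assume without loss of generality $\lambda_k^M(x)\ge\lambda_k^N(x)$. If $\lambda_k^M(x)\le\delta$ then $|\lambda_k^M(x)-\lambda_k^N(x)|\le\lambda_k^M(x)\le\delta$ since $\lambda_k^N(x)\ge 0$. Otherwise, for every $\epsilon$ with $\delta\le\epsilon<\lambda_k^M(x)$ one has $\rank(M|_{R_x^\epsilon})\ge k$, hence $\rank(N|_{R_x^{\epsilon-\delta}})\ge k$ by the previous step, i.e.\ $\epsilon-\delta\le\lambda_k^N(x)$; letting $\epsilon\uparrow\lambda_k^M(x)$ gives $\lambda_k^M(x)-\delta\le\lambda_k^N(x)$. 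In either case $|\lambda_k^M(x)-\lambda_k^N(x)|\le\delta$, so $d_\lambda^\infty(M,N)=\sup_{k,x}|\lambda_k^M(x)-\lambda_k^N(x)|\le\delta$, and taking the infimum over all admissible $\delta$ gives $d_\lambda^\infty(M,N)\le d_I(M,N)$. The hard part is the central identity of the first step: one must verify carefully that the top and bottom levels of $R_x^\epsilon$ really are upper and lower fences of $R_x^\epsilon$, and then chase diagrams to see that the canonical limit-to-colimit map of $M|_{R_x^\epsilon}$ is carried to the structure map $\mathcal{E}(M)(u_\epsilon\le v_\epsilon)$; once this is in place, the interleaving estimate and the landscape argument are entirely routine and parallel \cite{vipond2020}.
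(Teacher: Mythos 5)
Your proposal is correct and follows essentially the same route as the paper's proof: pass through the block extension functor $\mathcal{E}$, use Lemma \ref{lemma_extension_functor_explicit} together with the lower/upper-fence Proposition \ref{prop_iso_limcolim} to identify $\rank(M|_{R_x^\epsilon})$ with the rank of the structure map $\mathcal{E}(M)(u_\epsilon\le v_\epsilon)$, and then apply the standard interleaving factorization and landscape bookkeeping. The ``central identity'' you flag as the hard part is precisely the step the paper dispatches implicitly (its ``It follows that $k=\rank M|_{R^r_{(x,z)}}=\rank\phi$''), so your more explicit treatment of it, and of the case $\lambda_k^M(x)\le\delta$, only makes the same argument more careful.
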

\begin{proof}
	Assume that $M,N$ are $\epsilon$-interleaved. Let $(x,z)\in ZZ\times \Z$. Without loss of generality let $r=\lambda_k(M)(x,z)\geq \lambda_k(N)(x,z)$ and let $\lambda_k(M)(x,z)\geq \epsilon$. Since $M,N$ are $\epsilon$-interleaved we obtain the commutative diagram
	\[
	\begin{tikzcd}
		\mathcal{E}(M)(x+r,x-r,z-r) & \mathcal{E}(M)(x-r,x+r,z+r) \\
		\mathcal{E}(N)(x+h,x-h,z-h) & \mathcal{E}(N)(x-h,x+h,z+h)
		\arrow[from=1-1, to=1-2]
		\arrow[from=1-1, to=2-1]
		\arrow[from=2-1, to=2-2]
		\arrow[from=2-2, to=1-2]
	\end{tikzcd}
	\]
	where $h=r-\epsilon$. Using Lemma \ref{lemma_extension_functor_explicit}, we obtain the commutative diagram
	\[\begin{tikzcd}
		\limit M(\cdot,z-r)|_{[x-r,x+r]} & \colimit M(\cdot,z+r)|_{[x-r,x+r]} \\
		\limit N(\cdot,z-h)|_{[x-h,x+h]} & \colimit N(\cdot,z+h)|_{[x-h,x+h]}
		\arrow["\phi",from=1-1, to=1-2]
		\arrow[from=1-1, to=2-1]
		\arrow["\psi",from=2-1, to=2-2]
		\arrow[from=2-2, to=1-2]
	\end{tikzcd}.\]
	It follows that $k=\rank\; M|_{R_{(x,z)}^r}=\rank\; \phi\leq \rank\;\psi=\rank \;N|_{R_{(x,z)}^{r-\epsilon}}$. Hence, $\lambda_k(N)(x,z)\geq r-\epsilon$ and so $\lambda_k(M)(x,z)-\lambda_k(N)(x,z)\leq \epsilon$, completing the proof.
\end{proof}
\begin{remark}
	In the case where one would like to consider rectangular regions as mentioned in Remark \ref{rem:rectangular_regions}, an analogous stability theorem can be obtained by considering an adapted inclusion from $ZZ\times \Z$ to $\Z^\op\times\Z\times\Z$ and therefore also an adapted interleaving distance.
\end{remark}

\section{Algorithm}\label{sec:algorithm}
In practice, we regard finite and discrete time series and restrict our calculations to finitely many values of the distance parameter $\epsilon$. Therefore, the obtained persistence module has the following form:
\begin{equation*}
	\begin{tikzcd}[ampersand replacement=\&]
		M_{1,1} \arrow[r]\arrow[d]\&  M_{1,2}\arrow[d] \& \arrow[l] M_{1,3}\arrow[d]\arrow[r] \& M_{1,4}\arrow[d] \& \arrow[l]\cdots\arrow[r] \& M_{1,m-1}\arrow[d] \&  \arrow[l]M_{1,m}\arrow[d] \\
		M_{2,1} \arrow[r]\arrow[d]\&  M_{2,2}\arrow[d] \& \arrow[l] M_{2,3}\arrow[d]\arrow[r] \& M_{2,4}\arrow[d] \& \arrow[l]\cdots\arrow[r] \& M_{2,m-1}\arrow[d] \&  \arrow[l]M_{2,m}\arrow[d] \\
		\vdots \arrow[d]\&  \vdots\arrow[d] \&  \vdots\arrow[d] \& \vdots\arrow[d] \& \ddots\& \vdots\arrow[d] \&  \vdots\arrow[d] \\
		M_{n,1} \arrow[r]\&  M_{n,2} \& \arrow[l] M_{n,3}\arrow[r] \& M_{n,4} \& \arrow[l]\cdots\arrow[r] \& M_{n,m-1} \&  \arrow[l]M_{n,m}
	\end{tikzcd}
\end{equation*}

Regarding the definition of persistence landscapes (Def. \ref{def:landsc}), we are interested in the rank of $M$ restricted to quadratic regions $R_x^\epsilon$ in the parameter space centered at a point $x$. According to Theorem \ref{theo_main}, the generalized rank of an interval in the persistence module $M$ can be computed as the rank of the module restricted to a zigzag path along certain boundary points of $M$. To be precise, this path starts with a path through the lower fence and ends with a path through the upper fence. It holds that the lower fence contains all minimal elements and the upper fence contains all maximal elements.
In the case of squares $R_x^\epsilon$, the minimal and maximal points are points on the lower and upper edge of the square. For example, in the following diagram the minimal elements are colored in blue and the maximal elements are colored in green. The respective lower and upper fence is denoted by colored arrows in red and orange, respectively.
\begin{equation*}
	\begin{tikzcd}[ampersand replacement=\&]
		\textcolor{blue}{M_{1,1}} \arrow[r,red]\arrow[d]\&  M_{1,2}\arrow[d] \& \arrow[l,red] \textcolor{blue}{M_{1,3}}\arrow[d]\arrow[r,red] \& M_{1,4}\arrow[d] \& \arrow[l,red]\textcolor{blue}{M_{1,5}}\arrow[d] \\
		M_{2,1} \arrow[r]\arrow[d]\&  M_{2,2}\arrow[d] \& \arrow[l] M_{2,3}\arrow[d]\arrow[r] \& M_{2,4}\arrow[d] \& \arrow[l] M_{2,5}\arrow[d] \\
		M_{3,1}\arrow[r] \arrow[d]\&  M_{3,2}\arrow[d] \& \arrow[l] M_{3,3}\arrow[d]\arrow[r] \& M_{3,4}\arrow[d] \&\arrow[l] M_{3,5} \arrow[d] \\
		M_{4,1}\arrow[r] \arrow[d]\&  M_{4,2}\arrow[d] \& \arrow[l] M_{4,3}\arrow[d]\arrow[r] \& M_{4,4}\arrow[d] \&\arrow[l] M_{4,5} \arrow[d] \\
		M_{5,1} \arrow[r,orange]\&  \textcolor{green}{M_{5,2}} \& \arrow[l,orange] M_{5,3}\arrow[r,orange] \& \textcolor{green}{M_{5,4}} \& \arrow[l,orange] M_{5,5} 	
	\end{tikzcd}
\end{equation*}
We connect the lower and upper fence by the path that goes through the next smaller square centered at the same point to implicitly compute the rank of the next smaller square. We repeat this procedure iteratively. This is based on the assumptions of Theorem \ref{theo_main}, that state that the only condition for the intermediate path is that it has to connect the lower and the upper fence. As a result, for every point $x$ we obtain one zigzag diagram along the path that contains the information of the rank of every square centered at point $x$. Such a path at point $x=(4,4)$ would look like the red arrows in the following diagram.
\begin{equation*}
	\begin{tikzcd}[ampersand replacement=\&]
		\textcolor{blue}{M_{1,1}} \arrow[r,red, thick]\arrow[d]\&  M_{1,2}\arrow[d] \& \arrow[l,red, thick] \textcolor{blue}{M_{1,3}}\arrow[d]\arrow[r,red, thick] \& M_{1,4}\arrow[d] \& \arrow[l,red, thick]\textcolor{blue}{M_{1,5}}\arrow[d] \arrow[r,red, thick]\& M_{1,6}\arrow[d] \& \arrow[l,red, thick]\textcolor{blue}{M_{1,7}}\arrow[d,red, thick] \\
		M_{2,1} \arrow[r]\arrow[d]\&  M_{2,2}\arrow[d,red, thick] \& \arrow[l,red, thick] \textcolor{blue}{M_{2,3}}\arrow[d]\arrow[r,red, thick] \& M_{2,4}\arrow[d] \& \arrow[l,red, thick] \textcolor{blue}{M_{2,5}}\arrow[d]\arrow[r,red, thick]\& M_{2,6}\arrow[d] \& \arrow[l,red, thick]M_{2,7}\arrow[d] \\
		M_{3,1}\arrow[r] \arrow[d]\&  M_{3,2}\arrow[d] \& \arrow[l,red, thick] \textcolor{blue}{M_{3,3}}\arrow[d]\arrow[r,red, thick] \& M_{3,4}\arrow[d] \&\arrow[l,red, thick] \textcolor{blue}{M_{3,5}} \arrow[d,red, thick] \arrow[r]\& M_{3,6}\arrow[d] \& \arrow[l]M_{3,7}\arrow[d]\\
		M_{4,1}\arrow[r] \arrow[d]\&  M_{4,2}\arrow[d] \& \arrow[l] M_{4,3}\arrow[d,red, thick]\arrow[r,red, thick] \& M_{4,4}\arrow[d] \&\arrow[l,red, thick] M_{4,5} \arrow[d]\arrow[r]\& M_{4,6}\arrow[d] \& \arrow[l]M_{4,7}\arrow[d] \\
		M_{5,1}\arrow[r] \arrow[d]\&  M_{5,2}\arrow[d] \& \arrow[l] M_{5,3}\arrow[d]\arrow[r,red, thick] \& \textcolor{green}{M_{5,4}}\arrow[d] \&\arrow[l,red, thick] M_{5,5} \arrow[d] \arrow[r,red, thick]\& M_{5,6}\arrow[d,red, thick] \& \arrow[l]M_{5,7}\arrow[d]\\
		M_{6,1}\arrow[r,red, thick] \arrow[d,red, thick]\&  \textcolor{green}{M_{6,2}}\arrow[d] \& \arrow[l,red, thick] M_{6,3}\arrow[d]\arrow[r,red, thick] \& \textcolor{green}{M_{6,4}}\arrow[d] \&\arrow[l,red, thick] M_{6,5} \arrow[d]\arrow[r,red, thick]\& \textcolor{green}{M_{6,6}}\arrow[d] \& \arrow[l]M_{6,7}\arrow[d] \\
		M_{7,1} \arrow[r,red, thick]\&  \textcolor{green}{M_{7,2}} \& \arrow[l,red, thick] M_{7,3}\arrow[r,red, thick] \& \textcolor{green}{M_{7,4}} \& \arrow[l,red, thick] M_{7,5} \arrow[r,red, thick]\& \textcolor{green}{M_{7,6}} \& \arrow[l,red, thick]M_{7,7}
	\end{tikzcd}
\end{equation*}
To compute all spatiotemporal persistence landscapes at all points $x$ in the parameter space of size $n\times m$, one has to calculate $n\cdot m$ barcodes. Since they are independent of each other this step can be parallelized. In view of Lemma \ref{lemma:fibered_barcode} for the multiparameter persistence landscapes one has to compute only $n+m-1$ barcodes.\\
For every zigzag path in the parameter space, we calculate the sequence of simplicial complexes along this path. Then, we calculate the barcodes along this paths using \textsc{FastZigzag} \cite{dey_fzz}, an algorithm to obtain a barcode from a zigzag filtration in $\mathcal{O}(k^\omega)$ time, where $k$ is the length of the input filtration and $\omega$ is the matrix multiplication exponent. Since \textsc{FastZigzag} requires a simplexwise filtration as input we convert every zigzag sequence into a simplexwise zigzag sequence. 
Unfortunately, in our case the length of the input filtration can get very large since when going from one to the next window we delete all simplices of one window and add all simplices of the next window. At worst case, the number of possible insertions or deletions for one arrow in the zigzag module could be $2^N$ with $N$ being the number of points in the window. This is a very unlikely case, but in this case the length of the zigzag filtration is quadratic in $\min(2t-1,n)$, where $n$ is the number of spatial parameter values $\epsilon_1,...,\epsilon_n$ and $t$ number of windows. Furthermore, the length of the zigzag filtration is exponential in the number of points per window. From this we deduce that it is much more efficient to segment the time series such that we have as little points per windows as necessary. For periodic time series it means that one should aim to choose the window size as the length of the period.\\
In our implementation, all the simplicial complexes are Vietoris-Rips complexes. Since this is a costly type of simplicial complex, the choice of e.g. the alpha complex is expected to speed up calculations.\\ 
Given the barcodes along the respective paths, we can directly compute the landscape. Note that one has to convert back from the simplexwise barcode to the barcode of the original filtration. By considering Remark \ref{rem:kmax} one can easily calculate the landscapes from the barcodes. Our implementation uses parts of the code of \textsc{Topcat} \cite{topcat}, a library for multiparameter persistent homology available on GitHub.

\section{Applications}\label{sec:applications}
In this section, we use spatiotemporal persistence landscapes to visualize the persistent features in space and time of simulated time series.

\subsection{Sinusoidal signal}
First, we apply our method to two time series that consist of a noisy sine function with added white Gaussian noise, having a signal to noise ratio of 30 dB. Both can be seen in the first row of Figure \ref{fig:sinus_data}. The first one is an ordinary sinus whereas in the second time series, there is a jump at $x=0$. To obtain a point cloud, we perform a time-delay embedding with embedding dimension 2 (see Subsection \ref{subsubsec:delay_emb}). It is known that the resulting point cloud has the shape of an ellipse \cite{perea2015}.
\begin{figure}
	\centering
	\includegraphics[width=0.49\textwidth]{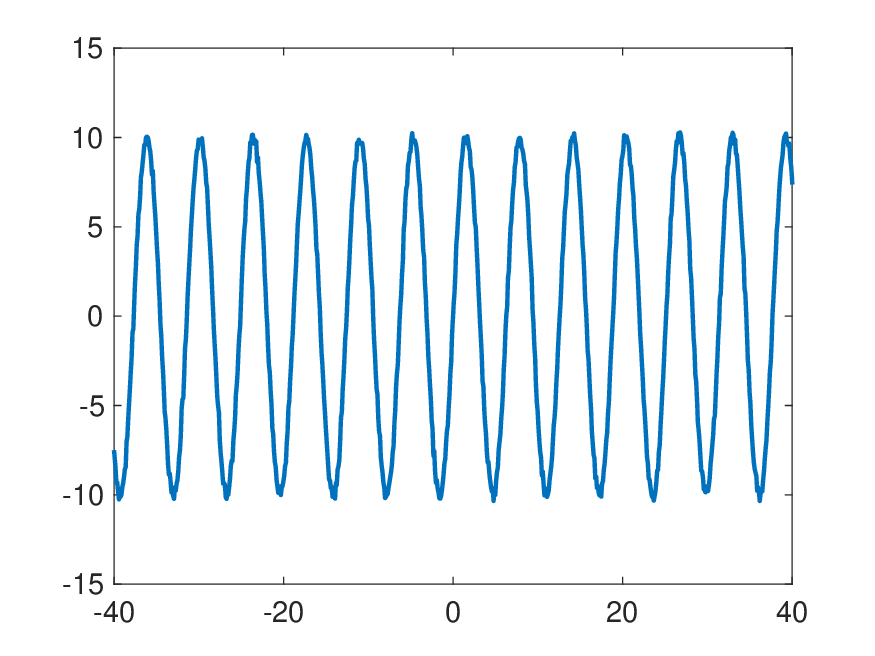}\;
	\includegraphics[width=0.49\textwidth]{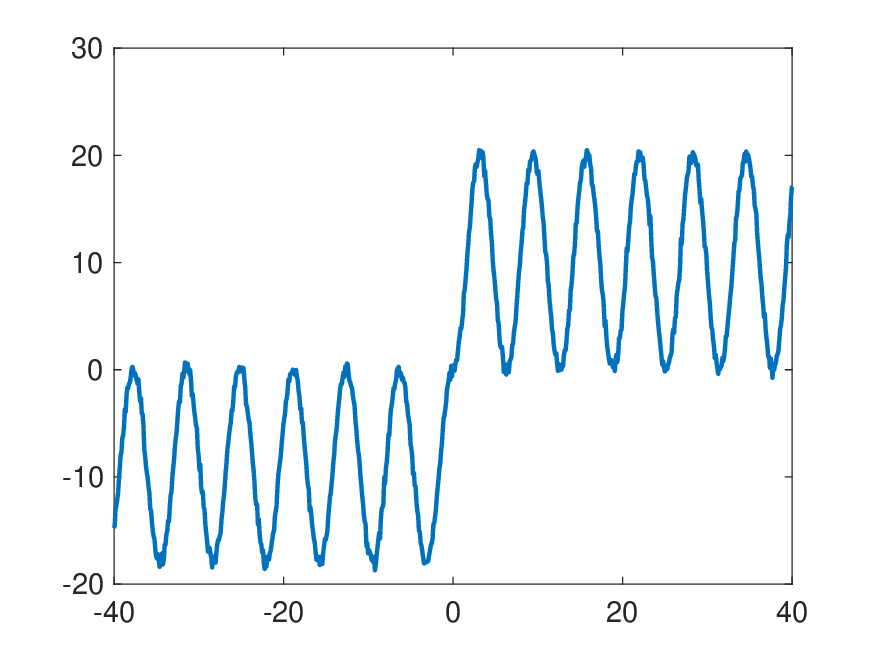}\\
	\includegraphics[width=0.49\textwidth]{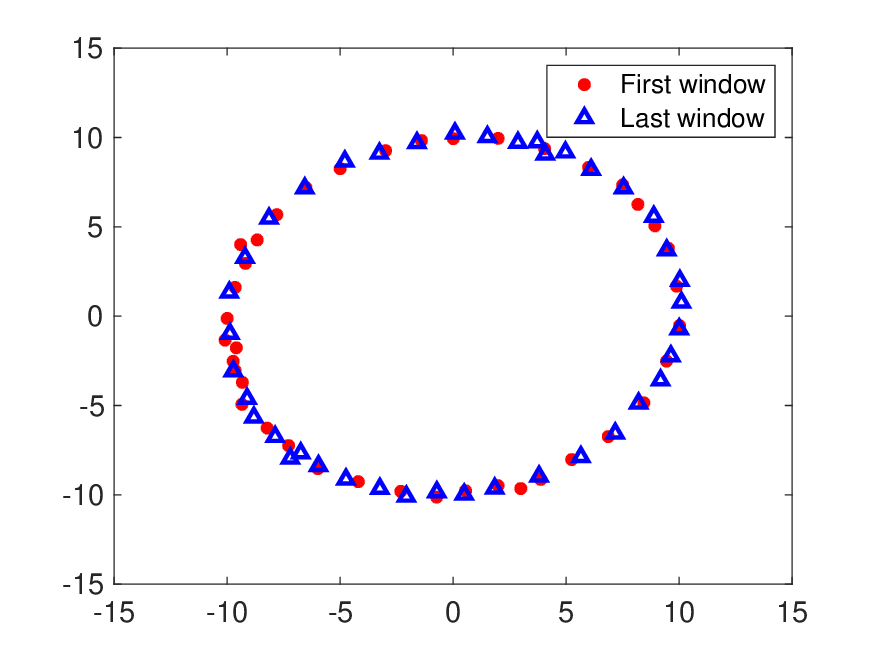}\;
	\includegraphics[width=0.49\textwidth]{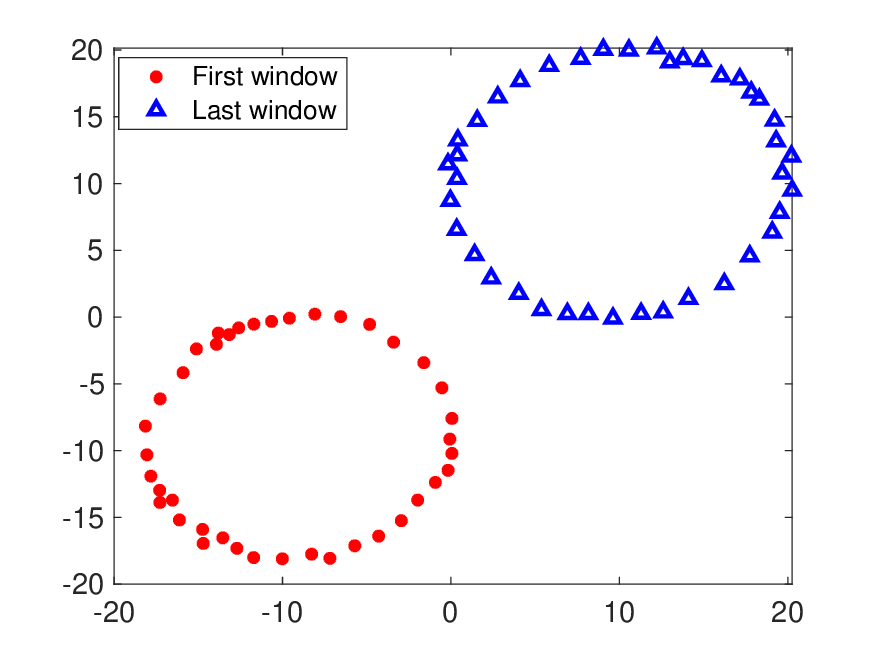}\\
	\includegraphics[width=0.49\textwidth]{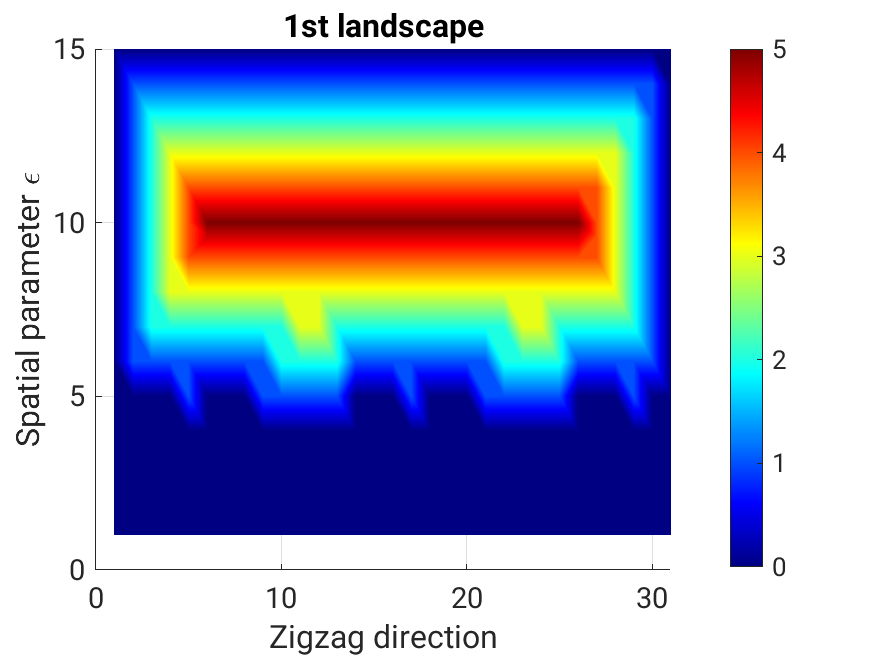}\;
	\includegraphics[width=0.49\textwidth]{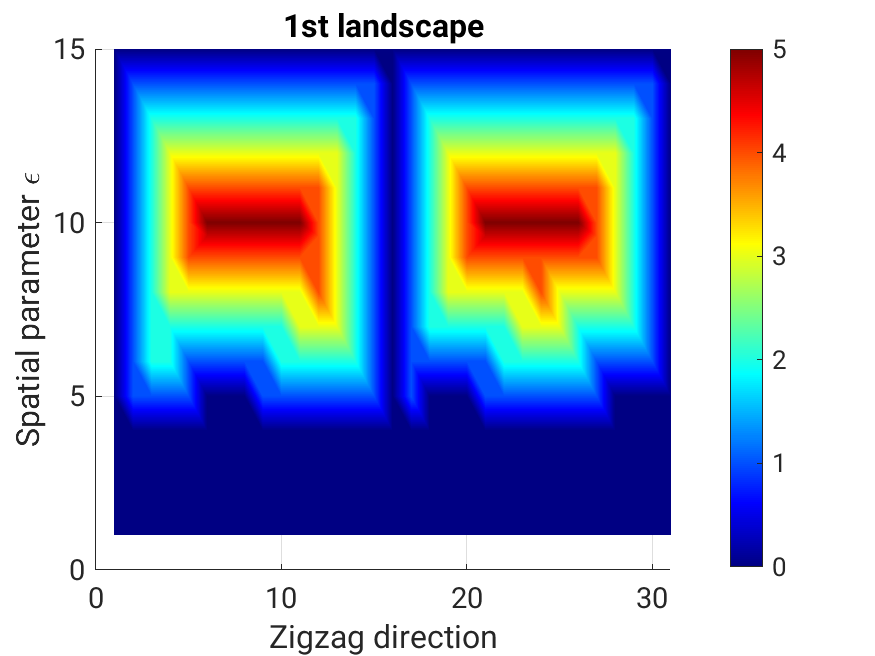}
	\caption{First row: noisy data; second row: delay embedding of the first and last windows; third row: first persistence landscape.}\label{fig:sinus_data}
\end{figure}
In the first row of Figure \ref{fig:sinus_data}, one can only see the central part of the used time series. For our calculations, we used longer time series lasting from -250s to 250s. We partition the time series into 16 windows and sample it down to 40 points per window by using a kmeans clustering algorithm. In the second column, one can see the point clouds of the first and the last windows of both time series, respectively. The jump that happens in the second time series at $t=0$ results in a shift of the ellipse obtained by delay embedding. To be precise, for the second time series the first 8 windows consist of points that approximate an ellipse in the third quadrant whereas the next 8 windows consist of points that approximate an ellipse in the first quadrant. Opposed to that, the location of the ellipse of each window obtained by the first time series does not change. In the second row of Figure \ref{fig:sinus_data} one can see the point clouds of the fist and last windows of both time series; on the left for the sinus time series and on the right for the sinus with the jump. Hence, for the first time series we expect one feature in homological dimension 1 that lasts from the first window to the last window. In contrast to that, in case of the second time series we expect two one dimensional features, one of them living in the first eight windows and the other one living in the second half of the windows.\\ 
In the third row of Figure \ref{fig:sinus_data}, one can see the resulting first persistence landscape for homological dimension 1 of both time series. Again, the landscape of the sinus time series is on the left and the landscape of the sinus with the jump is on the right. We only show the first landscape because in both cases already the second landscape and hence all higher landscapes are zero. We can conclude that there is only one persistent feature at each time. The landscapes for both functions look similar despite the fact that for the second function, there is a break at the vertical center line where the landscape decrease to zero. Hence, we see that there is no overlap between the two one dimensional homological features in the time series with the jump. Those homological features correspond to the ellipses in the first and second half of the time series. In contrast to that, for the first time series there is a homological feature that persists through the entire time series which causes the only elevation of the landscape. This qualitative difference between the two time series is captured well by our novel invariant.

\subsection{Detection of Hopf bifurcations}
We simulate an artificial data set following the procedure in \cite{tymochko2020} in order to detect Hopf bifurcations with spatiotemporal persistence landscapes. For this, we use the Sel'kov model \cite{selkov1968}, described by the following system of ordinary differential equations
\begin{align*}
	\dot{x}=-x+ay+x^2y, \quad \dot{y}=b-ay-x^2y.
\end{align*}
This system of equations has been observed to model the self-oscillations appearing in glycosis, the metabolic pathway to convert sugar into energy. By fixing the parameter $a=0.1$ and varying $b$ we obtain a limit cycle for $0.4\leq b\leq 0.8$ \cite{tymochko2020}. We model the observation function by taking only the $x$-coordinates as time series and use time delay embedding with embedding dimension $d=2$ (see Subsection \ref{subsubsec:delay_emb}) to obtain a point cloud in the plane. For each value of $b$, we cut the first half of the solution to get the limit behavior of the system and sample the point cloud down to 40 points by using kmeans clustering.

\begin{figure}
	\centering
	\includegraphics[width=0.32\textwidth]{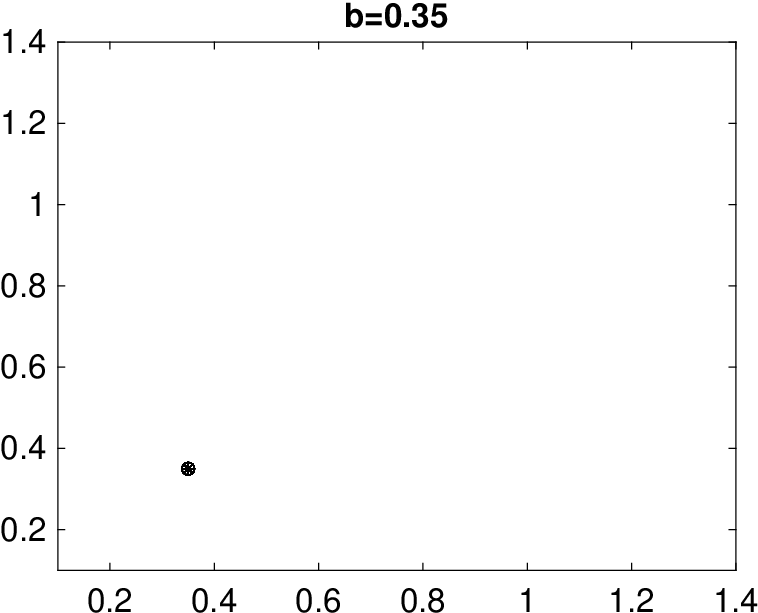}\;
	\includegraphics[width=0.32\textwidth]{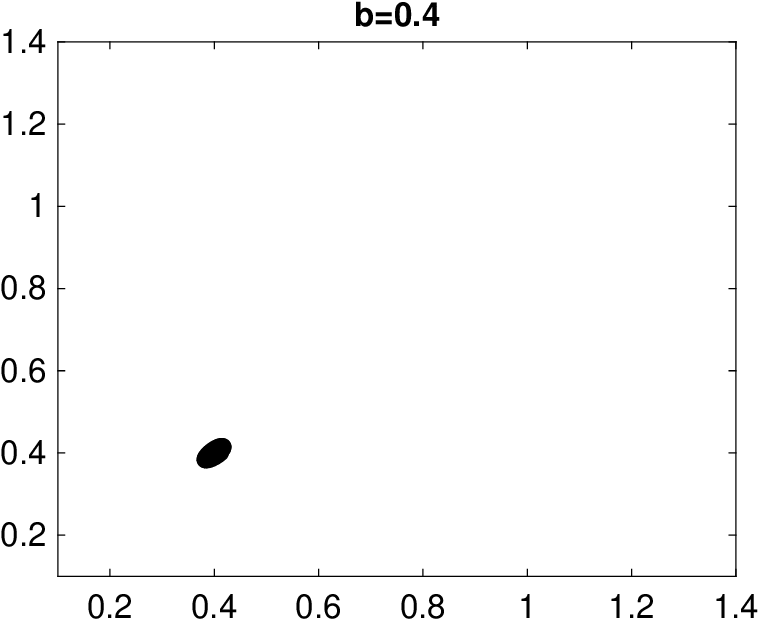}\;
	\includegraphics[width=0.32\textwidth]{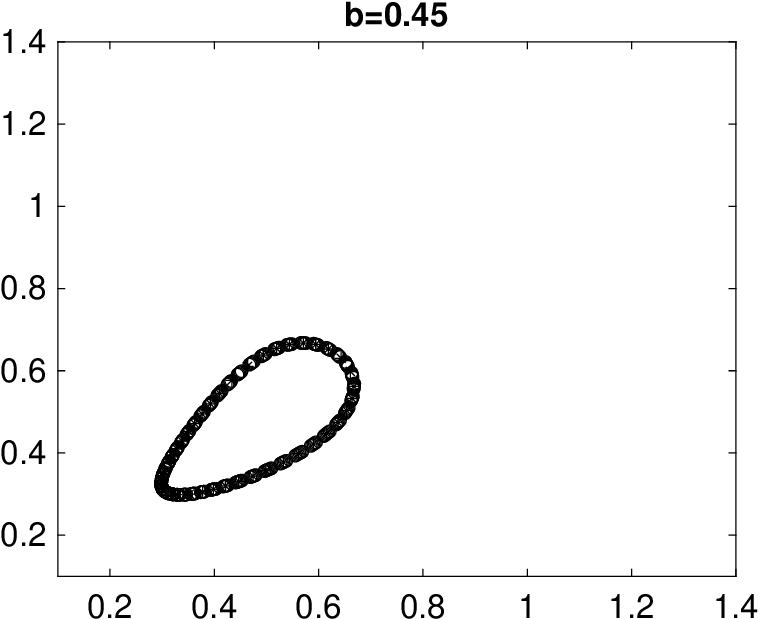}\\
	\vspace{0.2cm}
	\includegraphics[width=0.32\textwidth]{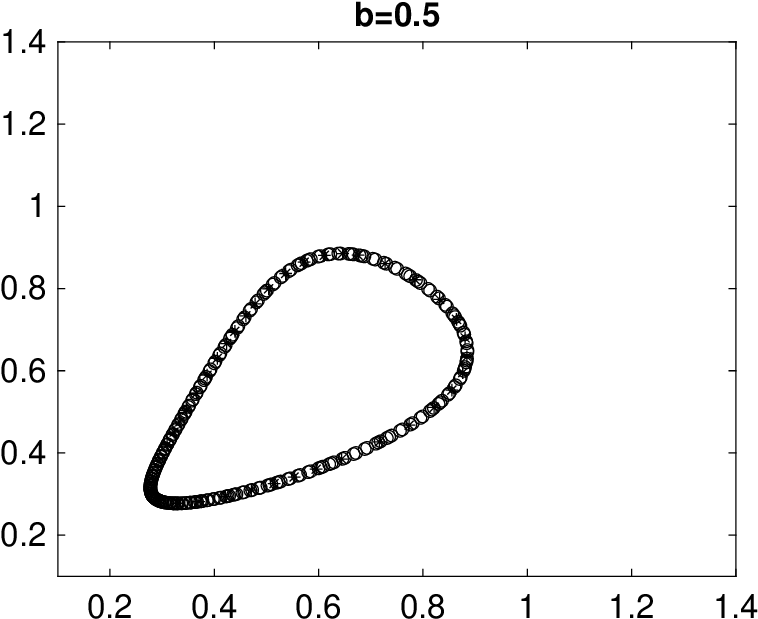}\;
	\includegraphics[width=0.32\textwidth]{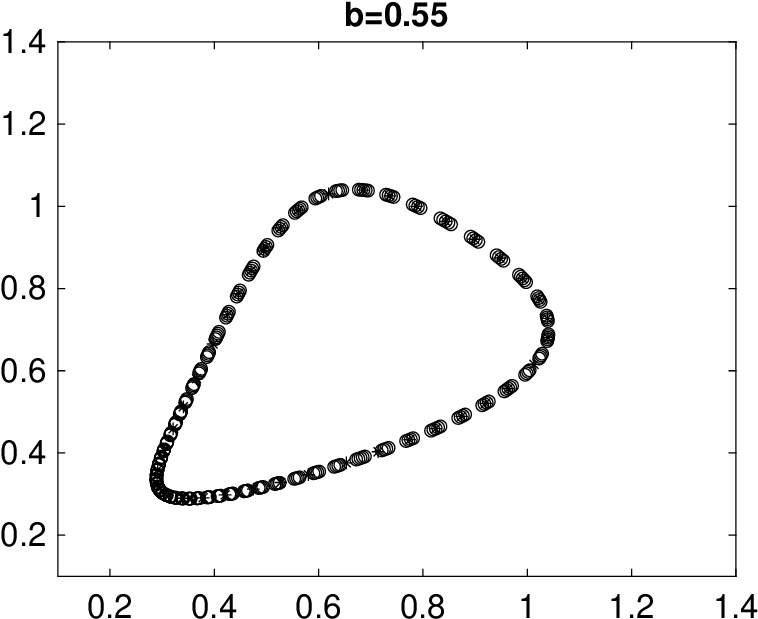}\;
	\includegraphics[width=0.32\textwidth]{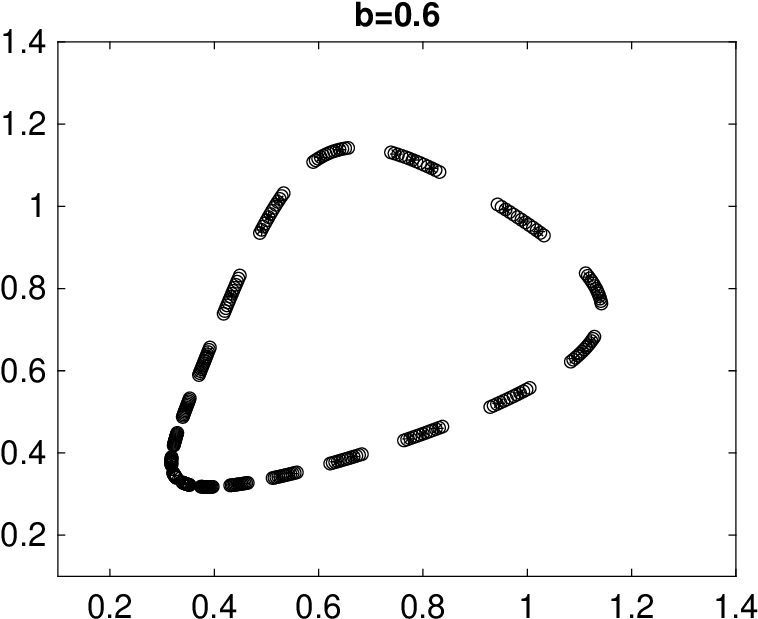}\\
	\vspace{0.2cm}
	\includegraphics[width=0.32\textwidth]{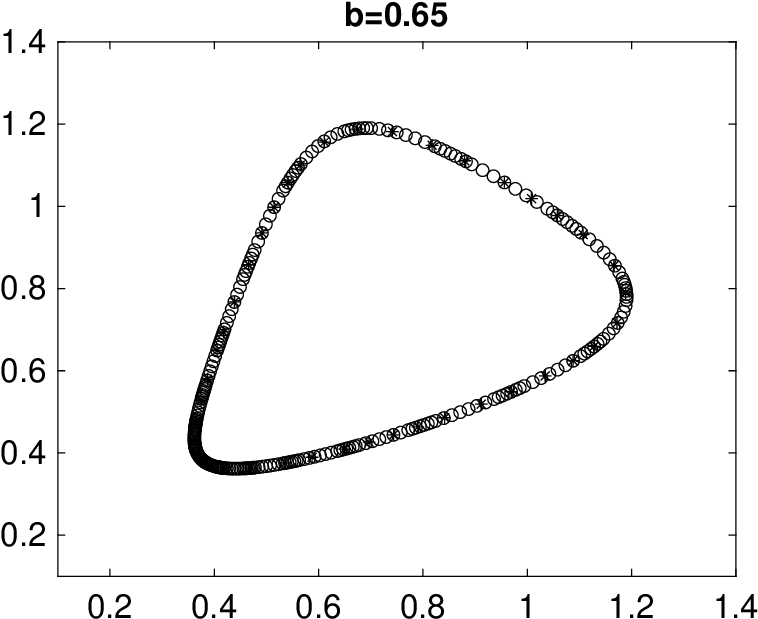}\;
	\includegraphics[width=0.32\textwidth]{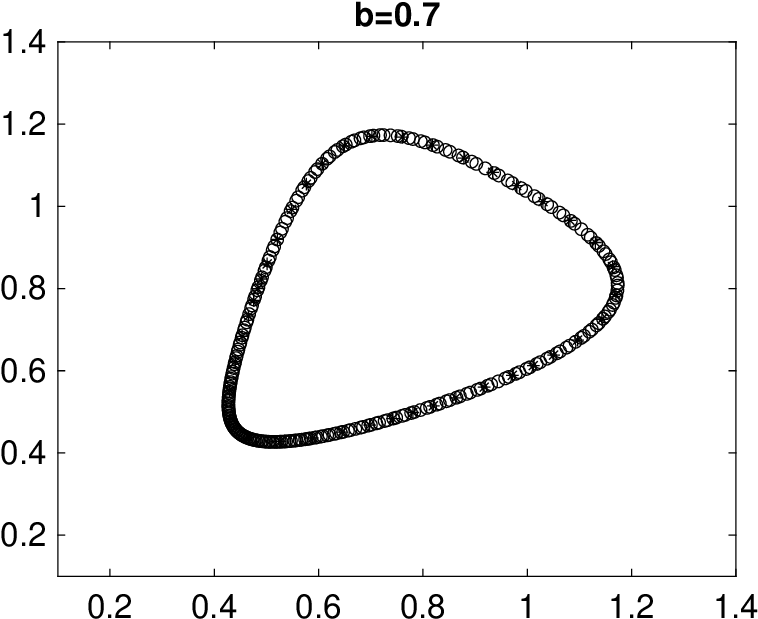}\;
	\includegraphics[width=0.32\textwidth]{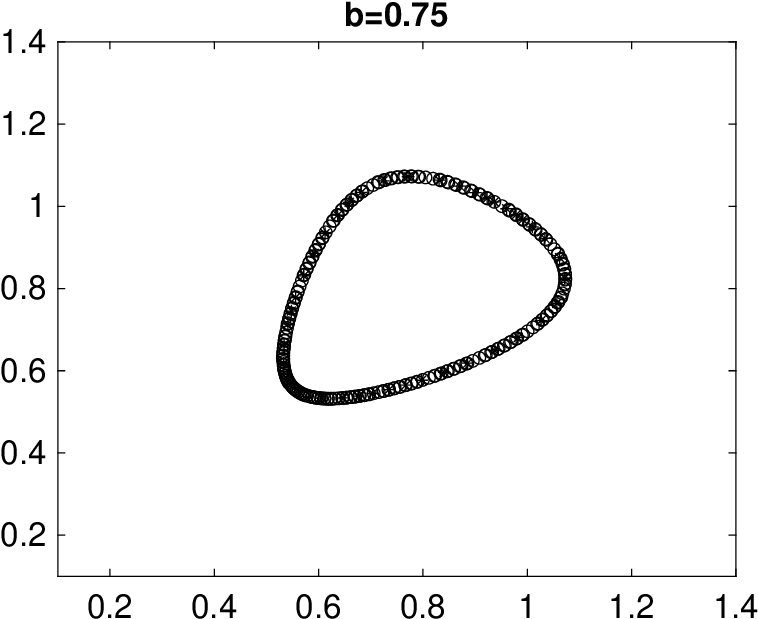}\\
	\vspace{0.2cm}
	\includegraphics[width=0.32\textwidth]{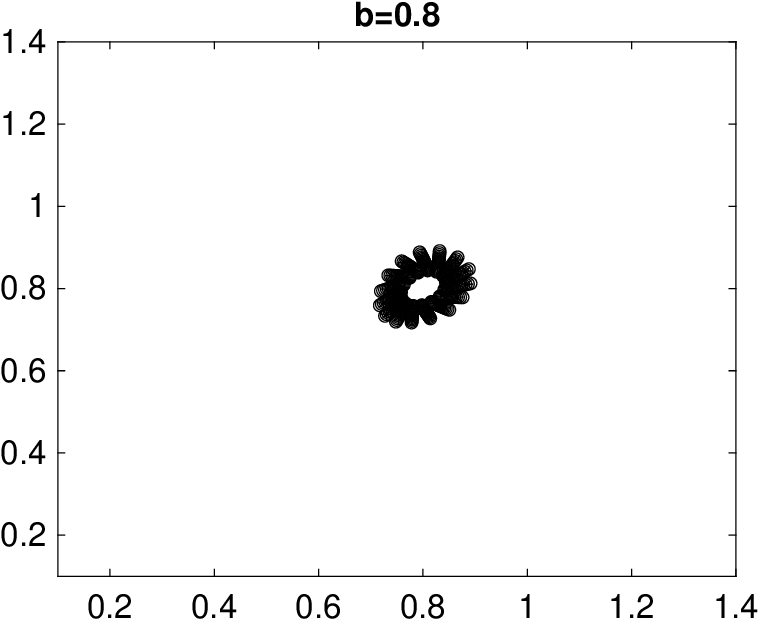}\;
	\includegraphics[width=0.32\textwidth]{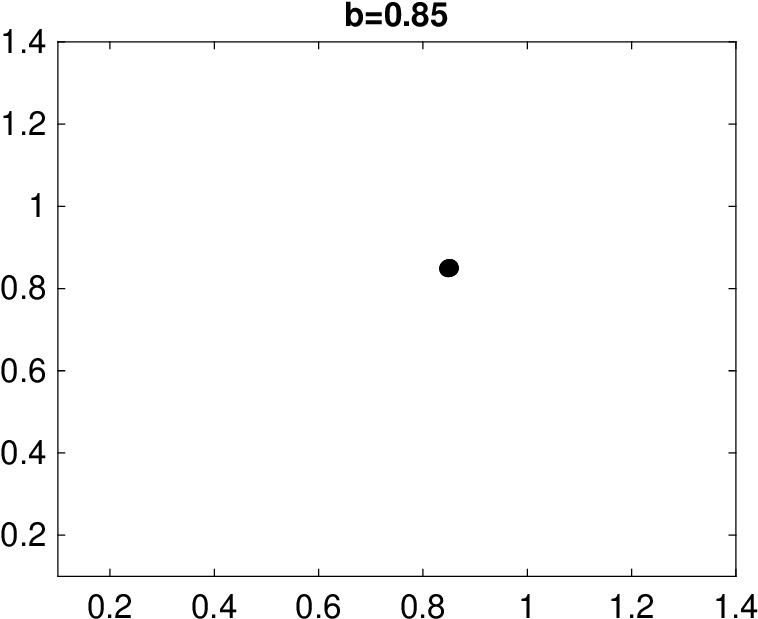}\;
	\includegraphics[width=0.32\textwidth]{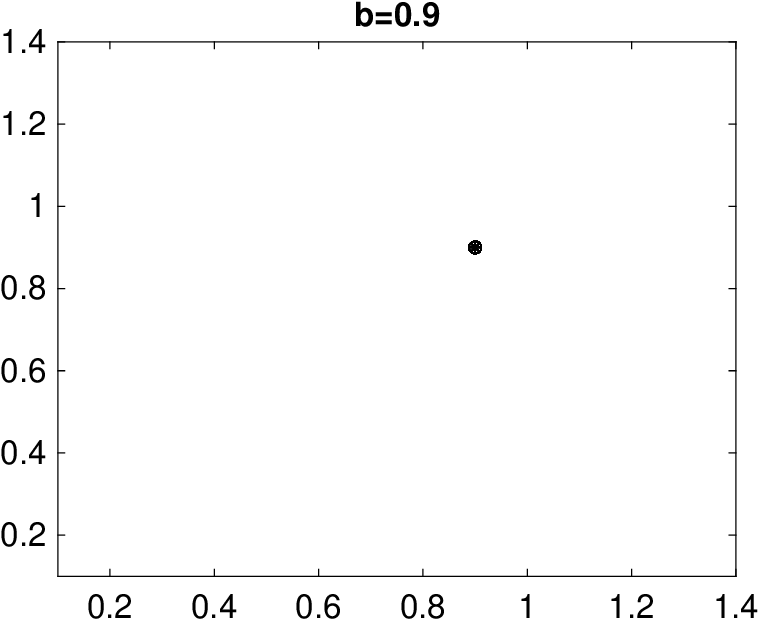}\\
	\vspace{0.2cm}
	\caption{Solutions of the Selkov model for different choices of parameter $b$.}\label{fig:selkov_data}
\end{figure}

In Figure \ref{fig:selkov_data}, one can see the solution for the Sel'kov model for parameter values ranging from $b=0.35$ to $b=0.9$ in steps of $0.05$. We simulated noisy data by adding white Gaussian noise to the data shown in Figure \ref{fig:selkov_data} with a SNR of $30$. For $30$ different noisy measurements, we calculated the first spatiotemporal persistent landscape in homological dimension one and the mean landscape. In Figure \ref{fig:lands_selkov}, one can see the resulting landscapes. In the top row on the left hand side, the landscape for the data set without noise is shown, which we refer to as the \textit{ground truth image}, on the right hand side one can see the mean landscape of the $30$ different noisy time series. The other six landscapes are the solutions for the first six noisy time series. We expect a homological feature in dimension one from $b=0.45$, which corresponds to zigzag index $5$. However, as we can see in the ground truth data, we cannot find any value for $\epsilon$ such that this feature survives on a region with radius more than one. The ground truth landscape has the highest value of $5$ for zigzag indices $12$ and $13$ and $\epsilon=10$. Hence, there is a persistent feature in space and time ranging from zigzag indices $7$ to $18$, meaning that we could detect a hole in the data set ranging from $b=0.5$ to $b=0.75$, which is approximately in accordance to the expectations. Even though the maximum value of the mean landscape is less that the maximum value of the ground truth, the landscape still indicated that there is a relevant hole ranging from zigzag indices $6$ to $17$. In the landscape of the first ($n=1$) time series, one does not see the qualitative behavior as in the other time series since there is no persistent feature in the expected range. This shows, how in some cases noise can affect the results such that the features loose persistence. On the contrary, the mean landscape still captures the relevant behavior of the data set and succeeds in detecting the Hopf bifurcations approximately, showing the relevance of statistical methods when dealing with noisy data.

\begin{figure}
	\centering
	\includegraphics[width=0.49\textwidth]{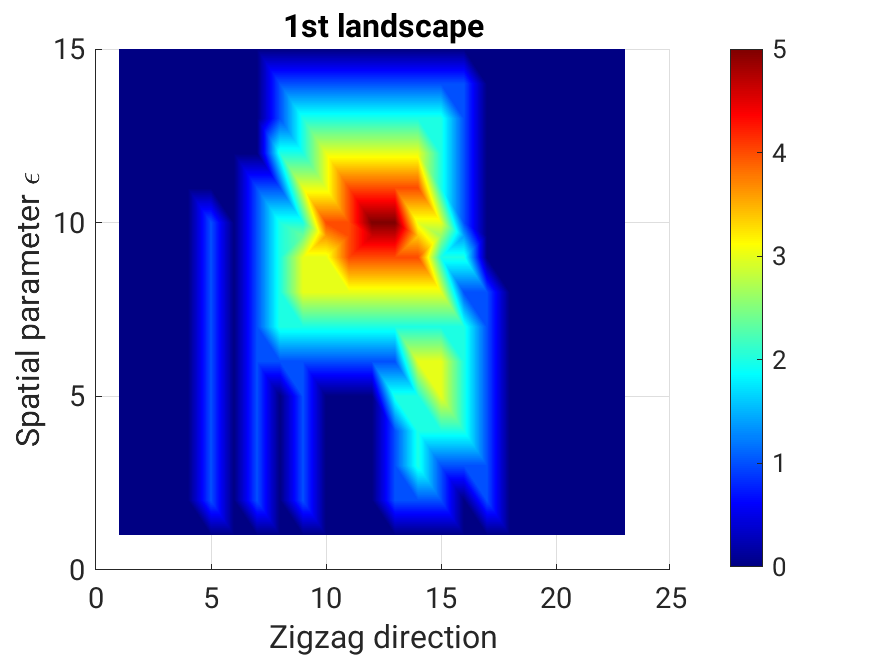}\;
	\includegraphics[width=0.49\textwidth]{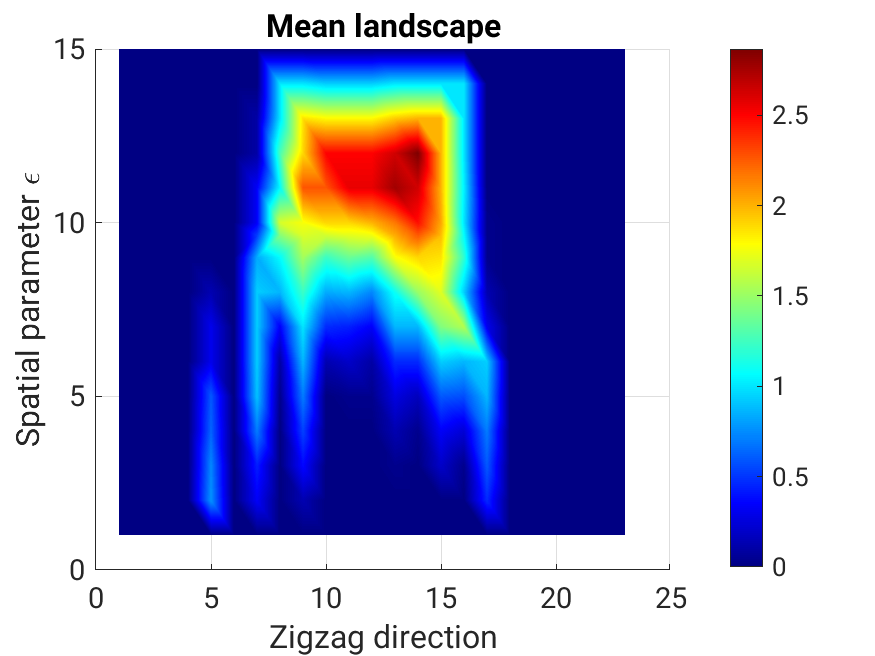}\\
	\includegraphics[width=0.49\textwidth]{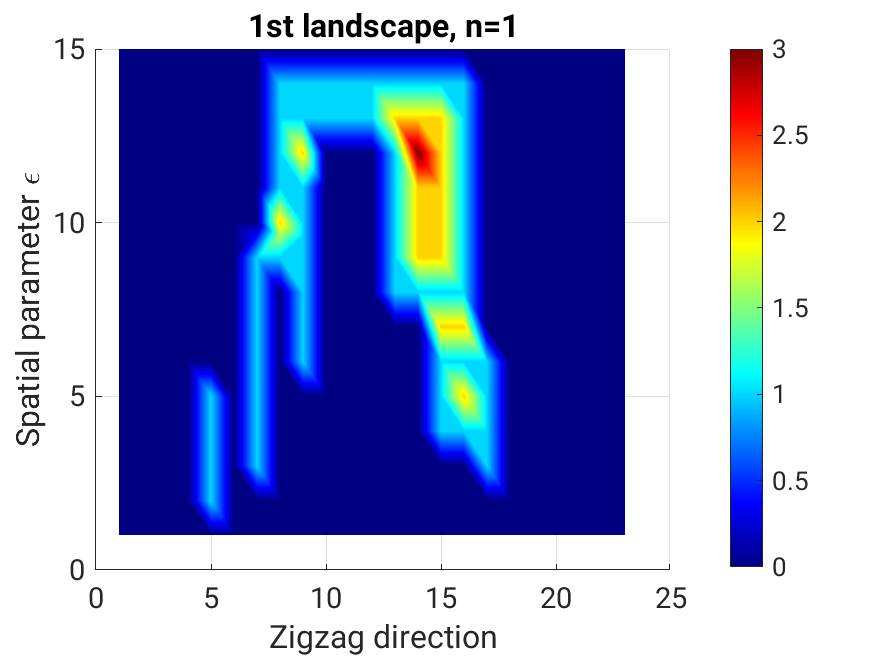}\;
	\includegraphics[width=0.49\textwidth]{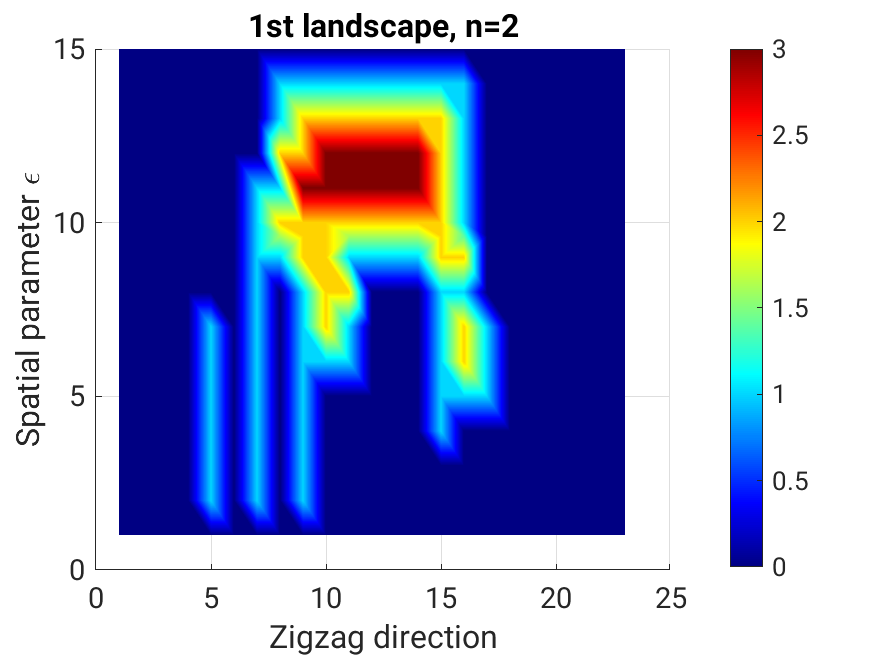}\\
	\includegraphics[width=0.49\textwidth]{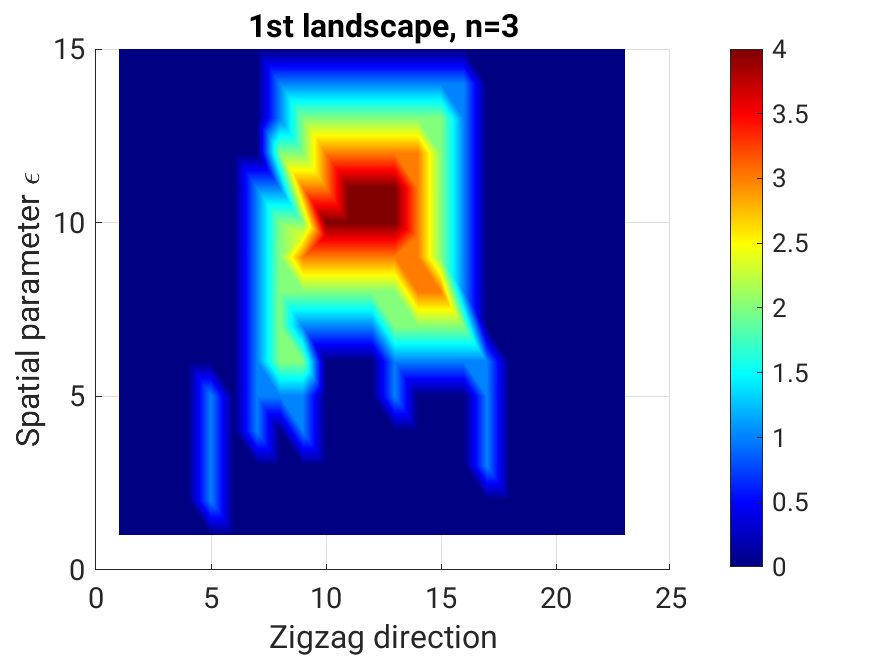}\;
	\includegraphics[width=0.49\textwidth]{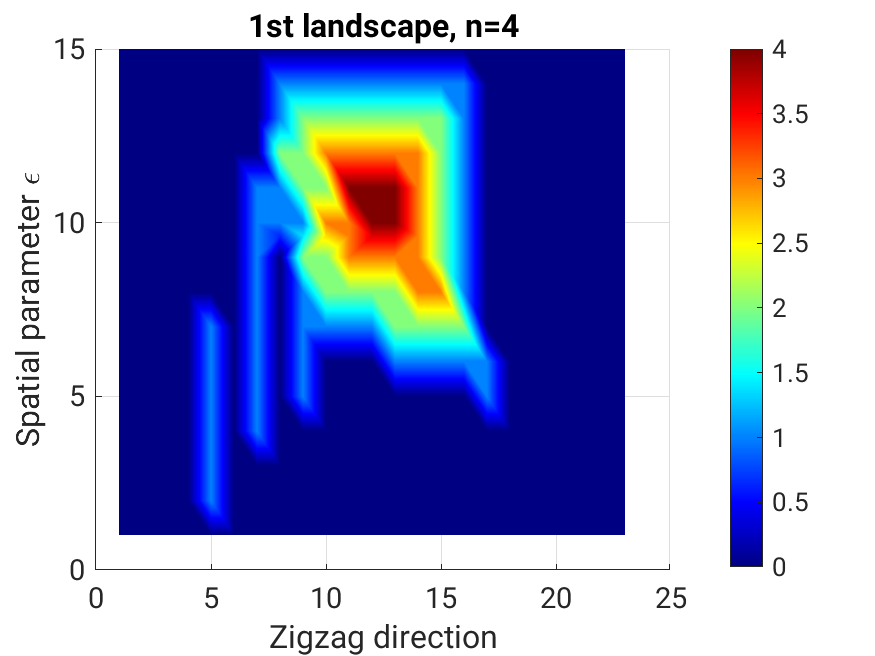}\\
	\includegraphics[width=0.49\textwidth]{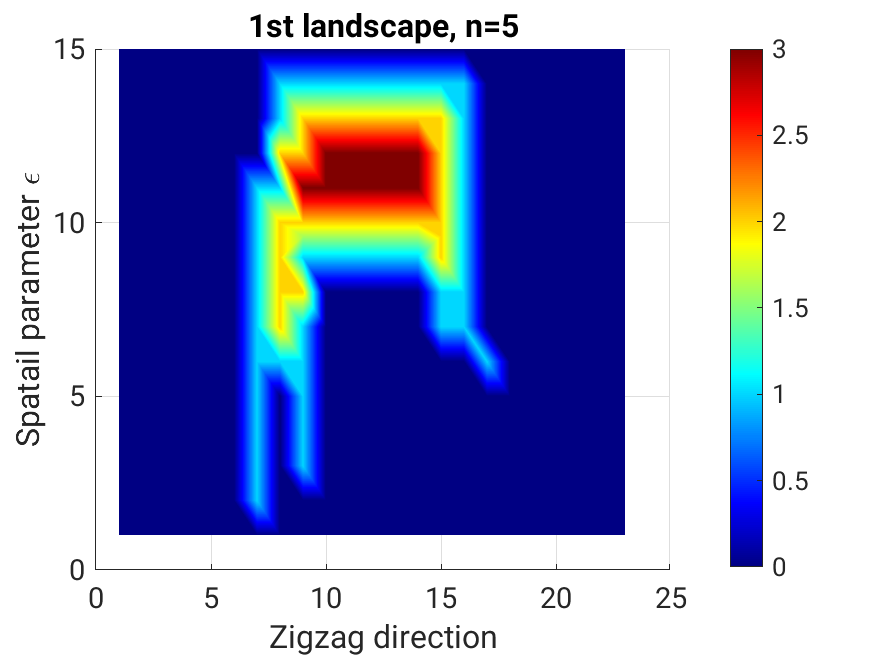}\;
	\includegraphics[width=0.49\textwidth]{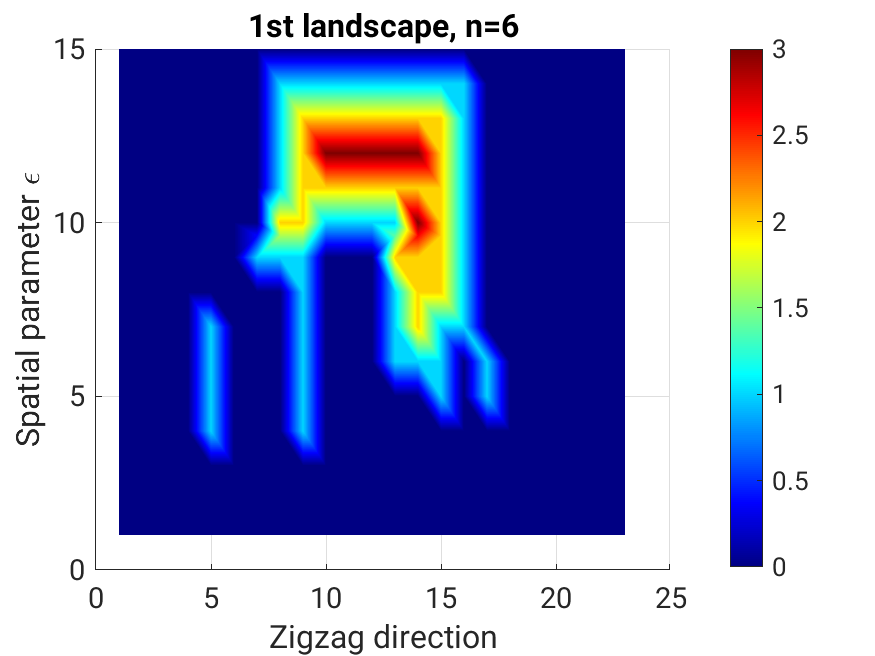}
	\caption{Top left: data without noise; top right: mean 1st landscape for 30 samplings; other rows: 1st landscapes for the first 6 noisy datasets}\label{fig:lands_selkov}
\end{figure}

\section{Discussion}\label{sec:discussion}
In this paper, we applied persistent homology to time series by regarding special kind of persistent modules which we call extended zigzag modules. The main advantage over the existing techniques is that one calculates features that are persistent simultaneously in space \textit{and} time direction by combining ideas from multiparameter and zigzag persistent homology. We proposed a way of visualization by defining spatiotemporal persistence landscapes for the extended zigzag modules. Furthermore, we define an interleaving distance for extended zigzag modules and proof stability of the spatiotemporal persistence landscapes with respect to the interleaving distance. To summarize, we defined a stable invariant taking values in a Banach space that carries useful statistical properties and can be used as an input for machine learning algorithms. \\
Since the behavior of the homological features can be observed over several spatial scales, one can utilize the landscapes to detect a suitable spatial scale to compute the ordinary zigzag persistent homology. In other applications, where zigzag homology was applied to time series \cite{tymochko2020}, it was a priori not clear how to choose a good spatial scale on which the persistence of features in time was observed. Our spatiotemporal persistence landscapes overcome this issue. \\
However, currently one disadvantage is the large computational cost and the high memory consumption. For every index in the parameter space one has to compute the persistence landscape as the barcode of a zigzag module through the parameter space. The application of techniques to reduce the size of the simplicial complex is expected to speed up the calculations as well as to reduce the storage needed.

\subsection*{Acknowledgments}
This work has been supported by the German Federal Ministry of Education and Research (BMBF-Projekt 05M20WBA und 05M20WWA: Verbundprojekt 05M2020 - DyCA). The first author (M.F.) thanks Michael Kerber (Graz University of Technology, Austria) for useful discussions.

\appendix	
	\section{Categorical definitions}\label{app:cat_defis}
	In this section, we revise some necessary categorical definitions. \\
	Let $\mathcal{C}$ be a category.
	\subsection{Limits and Colimits}
	\begin{definition}
		A \emph{diagram} $F$ indexed by a poset $(P,\leq)$ is a functor from the poset category $P$ to $\mathcal{C}$, i.e. every $p\in P$ is mapped to an object $F_p$ in $\mathcal{C}$ and any $p\leq q$ to a morphism $\phi_{p,q}:F_p\to F_q$ in $\mathcal{C}$, such that for any $p\leq q$ and $q\leq r$, it holds that $\phi_{q,r}\circ\phi_{p,q}=\phi_{p,r}$. 
	\end{definition}
	\begin{definition} Let $F:(P,\leq)\to\mathcal{C}$ be a diagram indexed by $(P,\leq)$. 
		A \emph{cone} of $F$ is an object $A$ of $\mathcal{C}$ together with a family $(\rho_p)_{p\in P}$ of morphisms $\rho_p:A\to F_p$, such that for any morphism $\phi_{p,q}:F_p\to F_q$ we have that $\rho_q=\phi_{p,q}\circ\rho_p$.\\
		A \emph{limit} of the diagram $F$ is a universal cone in the following sense: it is a cone $(L,(\psi_p)_{p\in P})$ with the property that for any other cone $(A,(\rho_p)_{p\in P})$ there exist a unique morphism $u:A\to L$ such that $\rho_p=\psi_p\circ u$ for all $p\in P$. 
	\end{definition}
	
	\begin{definition}
		Let $F:(P,\leq)\to\mathcal{C}$ be a diagram indexed by $(P,\leq)$. 
		A \emph{cocone} of $F$ is an object $B$ of $\mathcal{C}$ together with a family $(\tau_p)_{p\in P}$ of morphisms $\tau_p: F_p\to B$, such that for any morphism $\phi_{p,q}:F_p\to F_q$ we have that $\tau_p=\tau_q\circ\phi_{p,q}$.\\
		A \emph{colimit} of the diagram $F$ is a universal cocone in the following sense: it is a cocone $(C,(\sigma_p)_{p\in P})$ with the property that for any other cone $(B,(\tau_p)_{p\in P})$ there exist a unique morphism $u:C\to B$ such that $\tau_p=u\circ\sigma_p $ for all $p\in P$. 
	\end{definition} 
	Note that the limit and colimit, respectively, are \textit{essentially unique}, meaning that they are only unique up to an unique isomorphism. 
	
	\subsection{Kan extensions}\label{app:subsec_Kan}
	In order to extend persistence modules indexed by a poset $P$ to persistence modules indexed by another poset $Q$, which is a superset of $P$, we use categorical concepts that are called \textit{Kan extensions}. We will directly apply it to the setting of posets and functors between posets, however, the general definitions can be found in introductory books on category theory, for example in \cite{maclane_categories}. \\
	For a given functor between two posets $F:A\to B$ and a given $b\in B$ we define the sets 
	\begin{align*}
		A(F\leq b):=\{a\in A:F(a)\leq b\} \quad \mbox{ and }\quad A(F\geq b):=\{a\in A:F(a)\geq b\}.
	\end{align*}
	Let $M:A\to\cat{Vec}$ be a persistence module, then the \textit{left} Kan extension along the functor $F:A\to B$ is a persistence module $\mathrm{Lan}_F(M):B\to\cat{Vec}$ defined by
	\begin{align*}
		\mathrm{Lan}_F(M)(b):=\colimit M|_{ A(F\leq b)}.
	\end{align*}
	The structure maps $\mathrm{Lan}_F(M)(b)\to \mathrm{Lan}_F(M)(b')$ for $b\leq b'$ are given by the universal property of colimits. Again by universality of colimits, a morphism $f:M\to N$ between persistence modules $M,N\in\cat{Vec}^A$ induces a morphism $\mathrm{Lan}_F(f):\mathrm{Lan}_F(M)\to\mathrm{Lan}_F(N)$, making the left Kan extension a functor.\\
	In an analogous way we define the \textit{right} Kan extension of a persistence module $M:A\to\cat{Vec}$ along a functor $F:A\to B$ as
	\begin{align*}
		\mathrm{Ran}_F(M)(b):=\limit M|_{A(F\geq b)}
	\end{align*}
	with structure maps $\mathrm{Ran}_F(M)(b)\to \mathrm{Ran}_F(M)(b')$ for $b\leq b'$ given by the universal property of limits. As in the previous case, due to the universality of limits the right Kan extension is functorial meaning that it sends a morphism between persistence modules $f:M\to N$ to an induced morphism $\mathrm{Ran}_F(f):\mathrm{Ran}_F(M)\to\mathrm{Ran}_F(N)$.
	
	\begin{remark}
		Kan extensions are useful for the definition of \textit{continuous extensions} of discrete persistence modules. Given a $\Z^n$-indexed module, one obtains a $\R^n$-indexed module for example by taking the left Kan extension of the $\Z^n$-indexed module along the inclusion functor $\iota:\Z^n\hookrightarrow \R^n$.
	\end{remark}

	\section{Proof of Theorem \ref{theo_main}}\label{app:proof_main}
	For the proof of Theorem \ref{theo_main}, we closely follow the proof of Theorem 3.12 in \cite{dey2022}. However, in the part where we proof \underline{$\psi_{M_{\partial I}}=g\circ\xi\circ f$}, we have to extend the proof to make the theorem hold for a slightly more general class of paths $\Gamma_{\partial I}$.
	\begin{proof} Let $L:=\Gamma_{\min}$ and $U:=\Gamma_{\max}$ be lower and upper fences of interval $I$, with $\Gamma_{\min}$ and $\Gamma_{\max}$ as in Subsection \ref{subsec:gen_rank}. 
		Let further $(\limit M|_I, (\pi^M_p)_{p\in I})$, $(\limit M_{\partial I}, (\pi^{\partial I}_p)_{p\in \partial I})$, $(\limit M|_L, (\pi^L_p)_{p\in L})$ and $(\limit M|_U, (\pi^U_p)_{p\in U})$ be the limits of $M|_I$, $M_{\partial I}, M|_L$ and $M|_U$, respectively, and $(\colimit M|_I, (i^M_p)_{p\in I})$, $(\colimit M_{\partial I}, (i^{\partial I}_p)_{p\in \partial I})$, $(\colimit M|_L, (i^L_p)_{p\in L})$ and $(\colimit M|_U, (i^U_p)_{p\in U})$ be the colimits of $M|_I$, $M_{\partial I}, M|_L$ and $M|_U$, respectively. Note that $L$ is a lower fence of $I$ and $U$ is an upper fence of $I$.
		Keeping in mind the commutative diagram \ref{commDiag1}, but restricted to $M|_I$, it holds
		\begin{equation}
			\begin{tikzcd}[ampersand replacement=\&]
				\limit M|_L\arrow[r, "\xi"]\arrow[d,"e"] \& \colimit M|_U\arrow[d,"h"]\\
				\limit M|_I  \arrow[r,"\psi_{M|_I}"] \& \colimit M|_I,
			\end{tikzcd}\label{comm_diag_1_app}
		\end{equation}
		where $e$ and $h$ are isomorphisms. We want to prove that the rank of $\xi$ equals the rank of $\psi_{M_{\partial I}}$. To achieve this we want to show that there exists a surjective linear map $f:\limit M_{\partial I}\to \limit M|_L$ and an injective linear map $g:\colimit M|_U\to \colimit M_{\partial I}$ such that $\psi_{M_{\partial I}}=g\circ \xi \circ f$. We define the map $f$ as the canonical section restriction $(v_q)_{q\in\partial I}\mapsto (v_q)_{q\in L}$. The map $g$ is defined as the canonical map $[v_q]\mapsto [v_q]$ for any $q\in U$ and $v_q\in M_q$, which is the universal map from the colimit to the cocone.\\
		\underline{$\psi_{M_{\partial I}}=g\circ\xi\circ f$:} Let $p\in L$ and $q\in U$ with $p\leq q$ in $I$ (by the definitions of lower and upper fences such a choice exists). Since $p$ and $q$ also lie in $\partial I$, which is a path, there is a path $\Gamma =(p,p_1,...,p_n,q)$ of elements in $\partial I$. Let further $(v_r)_{r\in\partial I}\in\limit M_{\partial I}$. By definition of $\psi_{M_{\partial I}}$ we have that $\psi_{M_{\partial I}}((v_r)_{r\in\partial I})= [v_q]_{\partial I}$, where $[v_q]_{\partial I}$ is the equivalence class of $v_q$ in $\colimit M_{\partial I}$. On the other hand, we have that 
		\begin{align*}
			g\circ\xi\circ f((v_r)_{r\in\partial I})&=g\circ h^{-1}\circ\psi_M \circ e\circ f((v_r)_{r\in\partial I})=g\circ h^{-1}\circ i^M_p \circ \pi^M_p \circ e\circ f((v_r)_{r\in\partial I})\\
			&=g\circ h^{-1}\circ i^M_p \circ \pi^L_p\circ f((v_r)_{r\in\partial I})=g\circ h^{-1}\circ i^M_p \circ \pi^{L}_p((v_r)_{r\in L})\\
			&=g\circ h^{-1}\circ i^M_p (v_p)=g\circ h^{-1}\circ i^M_q (M_{p\leq q}(v_p)).
		\end{align*}
		Now, we define $w_q:=M_{p\leq q}(v_p)$. Since $q\geq p$, there is a path $\Gamma'=(q,p,p_1,...,p_n,q) $ in $P$ and a section $(w_q,v_p,v_{p_1},...,v_{p_n},v_q)$ of $M$ along $\Gamma'$ and hence by Proposition \ref{prop_path} it holds that $[w_q]_M=[v_q]_M$ in the colimit of $M$. In other words, $i^M_q(w_q)=i^M_q(v_q)$. This yields
		\begin{align*}
			g\circ h^{-1}\circ i^M_q (M_{p\leq q}(v_p))&=g\circ h^{-1}\circ i^M_q (w_q)=g\circ h^{-1}\circ i^M_q (v_q)=g\circ i^U_q(v_q) \\
			&=i^{\partial I}_q(v_q) = [v_q]_{\partial I}.
		\end{align*}
		It remains to show that $f$ is surjective and $g$ is injective.\\
		\underline{Surjectivity of $f$:} Let $r':\limit M\to\limit M_{\partial I}$ be the canonical section restriction map $(v_r)_{r\in P}\mapsto (v_r)_{r\in\partial I}$. Then, the restriction $r:\limit M\to \limit M|_L$ can be seen as composition of two restrictions, i.e. $r=f\circ r'$. Since $r$ is the inverse of the isomorphism $e$, $r$ is surjective and so is $f$.\\
		\underline{Injectivity of $g$:} Let $h':\colimit M_{\partial I}\to \colimit M$ be the unique map such that $i^{M}_q(v_q)=h'\circ i^{\partial I}_q(v_q)$ for all $q\in\partial I$ and $v_q\in M_q$, which exists by the universal property of $\colimit M_{\partial I}$ since $\colimit M$ is in particular a cocone of the diagram $M_{\partial I}$. Hence, $h'$ maps $[v_q]_{\partial I}$ to $[v_q]_{M}$. It holds that $h=h'\circ g$ for the isomorphism $h$ in Diagram \ref{comm_diag_1_app}. This shows that $g$ is injective.
	\end{proof}
	
	\section{Probability in Banach spaces}\label{app:prob_banach}
	
	Here we summarize a few results from the theory of probability of Banach spaces, as presented in \cite{bubenik2015}. We assume that $\mathcal{B}$ is a real separable Banach space with norm $\|\cdot\|$ and topological dual space $\mathcal{B}^*$. Assume further that $V:(\Omega,\mathcal{F},P)\to \mathcal{B}$ is a Borel measurable random variable defined on the probability space $(\Omega,\mathcal{F},P)$. By composition $\|V\|:\Omega\stackrel{V}{\to} \mathcal{B}\stackrel{\|\cdot\|}{\to}\mathbb{R}$ we obtain a real valued random variable. Furthermore, we obtain a real valued random variable by composition with elements $f$ of the dual space $f(V):\Omega\stackrel{V}{\to}\mathcal{B}\stackrel{f}{\to}\mathbb{R}$. Recall that the expected value of a real random variable $X:(\Omega,\mathcal{F},P)\to \R$ is defined as $E(X)=\int XdP=\int_{\Omega}X(\omega)dP(\omega)$. For random variables with values in a Banach space the so-called \textit{Pettis integral} yields an analogue to the expected value.
	
	\begin{definition}\label{def:pettis_integral}
		Let $V:(\Omega,\mathcal{F},P)\to \mathcal{B}$ be a Borel random variable with values in the real separable Banach space $\mathcal{B}$. An element $E(V)\in\mathcal{B}$ is called the Pettis integral of $V$ if $E(f(V))=f(E(V))$ for all $f\in\mathcal{B}^*$. 
	\end{definition}
	The following proposition yields a sufficient condition for Pettis integrability. 
	\begin{proposition}
		If $E\|V\|<\infty$, then $V$ has a Pettis integral and $\|E(V)\|\leq E\|V\|$.
	\end{proposition}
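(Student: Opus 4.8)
The plan is to realize the Pettis integral as the Bochner integral of $V$ and then to read off the norm estimate from Hahn--Banach. I would proceed in three steps: first check that the scalar coordinates $f(V)$ are integrable, then construct a vector $E(V)\in\mathcal{B}$ using separability, and finally verify the two claimed properties.

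\emph{Step 1 (the scalar functionals).} For every $f\in\mathcal{B}^*$ one has $|f(V(\omega))|\leq\|f\|\,\|V(\omega)\|$ pointwise, hence $E|f(V)|\leq\|f\|\,E\|V\|<\infty$ and $f(V)$ is an integrable real random variable. Thus $f\mapsto E(f(V))$ is a well-defined linear functional on $\mathcal{B}^*$, bounded with norm at most $E\|V\|$; a priori this only exhibits an element of $\mathcal{B}^{**}$, and the real content of the statement is that it comes from a genuine vector of $\mathcal{B}$.

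\emph{Step 2 (construction of $E(V)$).} Here separability does the work. Since $\mathcal{B}$ is separable and $V$ is Borel measurable, the Pettis measurability theorem gives that $V$ is strongly measurable, i.e.\ there are simple (finitely valued) measurable functions $V_n$ with $V_n\to V$ $P$-a.e.\ and $\|V_n\|\leq 2\|V\|$ a.e. For each simple $V_n$ the integral $\int_\Omega V_n\,dP\in\mathcal{B}$ is defined by linearity, and $\bigl\|\int V_n\,dP-\int V_m\,dP\bigr\|\leq\int\|V_n-V_m\|\,dP$. Since $\|V_n-V_m\|\to 0$ a.e.\ and is dominated by $4\|V\|\in L^1(P)$, dominated convergence shows $\bigl(\int V_n\,dP\bigr)_n$ is Cauchy in $\mathcal{B}$; set $E(V):=\lim_n\int V_n\,dP$. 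For any $f\in\mathcal{B}^*$ we have $f\bigl(\int V_n\,dP\bigr)=\int f(V_n)\,dP$ by linearity for simple functions, and passing to the limit --- continuity of $f$ on the left, and $f(V_n)\to f(V)$ in $L^1(P)$ on the right (dominated convergence again, using $|f(V_n)-f(V)|\leq 3\|f\|\,\|V\|$) --- yields $f(E(V))=E(f(V))$. Hence $E(V)$ is a Pettis integral of $V$.

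\emph{Step 3 (the norm bound).} This is immediate from Step 1 and Hahn--Banach: $\|E(V)\|=\sup_{\|f\|\leq 1}|f(E(V))|=\sup_{\|f\|\leq 1}|E(f(V))|\leq\sup_{\|f\|\leq 1}E|f(V)|\leq E\|V\|$, the final inequality using $|f(V(\omega))|\leq\|V(\omega)\|$ for $\|f\|\leq 1$ together with monotonicity of the expectation. The main obstacle is Step 2, specifically upgrading the functional of Step 1 to an element of $\mathcal{B}$: for non-reflexive $\mathcal{B}$ this genuinely fails without extra hypotheses, so it is cleanest to route the argument through strong measurability and the Bochner integral rather than trying to argue inside $\mathcal{B}^{**}$; everything else is routine dominated-convergence bookkeeping. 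Since this is an expository appendix, one may alternatively just cite the result as standard (e.g.\ from \cite{ledoux1991}).
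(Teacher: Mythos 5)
Your argument is correct. Note, however, that the paper does not prove this proposition at all: the appendix explicitly presents it as a summary of standard facts (quoted from the persistence-landscape literature and ultimately from Ledoux--Talagrand), so there is no in-paper proof to compare against --- your closing remark that one may simply cite the result is exactly what the authors do. What you supply is the standard textbook argument, and it is sound: Borel measurability plus separability gives strong measurability via the Pettis measurability theorem, the simple-function approximation with the pointwise domination $\|V_n\|\leq 2\|V\|$ is a routine refinement of that construction, dominated convergence (using $E\|V\|<\infty$) makes the simple integrals Cauchy, and the identity $f(E(V))=E(f(V))$ passes to the limit, so you in fact establish the stronger statement that $V$ is Bochner integrable, of which Pettis integrability is a consequence; the bound $\|E(V)\|\leq E\|V\|$ then follows from Hahn--Banach exactly as you write. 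Your Step 1 observation that the functional $f\mapsto E(f(V))$ only lives in $\mathcal{B}^{**}$ a priori correctly identifies where separability (or some substitute hypothesis) is genuinely needed, which is the one conceptual point worth recording beyond the citation.
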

	For the further framework two notions of convergence of random variables with values in a Banach space are important. Let $(V_n)_{n\in \mathbb{N}}$ be a sequence of independent copies of $V$ and let further $S_n:=\sum_{i=1}^n V_n$. Analogously as for real valued random variables, we define that $(V_n)_{n\in\mathbb{N}}$ converges \textit{almost surely} to $V$ if $P(\lim_{n\to\infty}V_n=V)=1$. Furthermore, $(V_n)_{n\in\mathbb{N}}$ converges \textit{weakly} to $V$ if for all bounded continuous functions $\phi:\mathcal{B}\to\R$ it holds that $\lim_{n\to\infty}E(\phi(V_n))=E(\phi(V))$. 
	\begin{theorem}(Strong law of large numbers)\\
		It holds that $(\frac{1}{n}S_n)\to E(V)$ almost surely iff $E\|V\|<\infty$.
	\end{theorem}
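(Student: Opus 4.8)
The plan is to prove the two implications of this \emph{iff} separately: this is Mourier's strong law of large numbers for Banach-space-valued random variables, and beyond the classical real-valued theory the only extra ingredients are (i) the $L^1$-approximation of $V$ by finitely-valued random variables, where separability of $\mathcal{B}$ is used, and (ii) the Pettis-integral estimate $\|E(W)\|\le E\|W\|$ recorded in the proposition immediately above, which in particular already guarantees that $E(V)$ exists once $E\|V\|<\infty$.

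For the ``only if'' direction, suppose $\tfrac1n S_n\to E(V)$ almost surely. From $\tfrac1n V_n=\tfrac1n S_n-\tfrac{n-1}{n}\cdot\tfrac1{n-1}S_{n-1}$ and letting $n\to\infty$ we get $\|V_n\|/n\to 0$ almost surely; hence, for almost every $\omega$, the event $\{\|V_n\|>n\}$ occurs for only finitely many $n$. Since the $V_n$ are independent, so are the events $\{\|V_n\|>n\}$, and the second Borel--Cantelli lemma then forces $\sum_{n\ge 1}P(\|V_n\|>n)<\infty$. As the $V_n$ are identically distributed, $P(\|V_n\|>n)=P(\|V\|>n)$, and the standard comparison $E\|V\|\le 1+\sum_{n\ge1}P(\|V\|>n)$ (valid for the nonnegative real random variable $\|V\|$) yields $E\|V\|<\infty$.

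For the ``if'' direction, assume $E\|V\|<\infty$; then $E(V)$ exists, and after replacing $V$ by $V-E(V)$ we may assume $E(V)=0$. Fix $\epsilon>0$. Since $\mathcal{B}$ is separable, $V$ is a norm-limit in $L^1(\Omega;\mathcal{B})$ of simple random variables, so we may choose a finitely-valued $\mathcal{F}$-measurable $V'$ with $E\|V-V'\|<\epsilon$; set $W:=V-V'$ and let $V_i',W_i$ be the corresponding i.i.d.\ copies. The random variable $V'$ takes values in a finite-dimensional subspace $F\subset\mathcal{B}$, so writing $V'$ in a fixed basis of $F$ and applying the classical real SLLN to each (bounded) coordinate gives $\tfrac1n S_n'\to E(V')$ almost surely. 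For the remainder, $\bigl\|\tfrac1n\sum_{i=1}^n W_i\bigr\|\le\tfrac1n\sum_{i=1}^n\|W_i\|\to E\|W\|$ almost surely by the real SLLN applied to $\|W_i\|$. Combining, and using $E(V')=-E(W)$ together with the Pettis bound, we obtain on a probability-one event
\[
\limsup_{n\to\infty}\Bigl\|\tfrac{1}{n}S_n\Bigr\|\le\bigl\|E(V')\bigr\|+E\|W\|=\bigl\|E(W)\bigr\|+E\|W\|\le 2E\|W\|<2\epsilon.
\]
Intersecting the corresponding probability-one events over $\epsilon=1/m$, $m\in\mathbb{N}$, gives $\tfrac1n S_n\to 0=E(V)$ almost surely.

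The step requiring the most care is the $L^1$-approximation of $V$ by simple functions: this is precisely where separability of $\mathcal{B}$ (equivalently, of the almost-sure range of $V$) is indispensable, and it is the Bochner-integrability fact that an $\mathcal{B}$-valued random variable with $E\|V\|<\infty$ over a separable $\mathcal{B}$ is an $L^1$-limit of simple random variables. Everything else is the real-valued SLLN plus bookkeeping of null sets; note in particular that we never need the $V_i$ themselves to be finitely valued, only an $L^1$-close simple surrogate, which is what legitimizes the finite-dimensional reduction.
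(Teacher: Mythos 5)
Your argument is correct: it is the standard (Mourier) proof of the strong law of large numbers in a separable Banach space, with the converse handled by the second Borel--Cantelli lemma and the direct implication by an $L^1$-approximation of $V$ by a simple random variable plus the classical real SLLN applied coordinatewise and to $\|W_i\|$. There is, however, nothing in the paper to compare it against: the theorem is stated in Appendix C purely as a quoted background result (the appendix explicitly "summarizes" results as presented in Bubenik's paper, ultimately from Ledoux--Talagrand), and no proof is given there, so your write-up is strictly more than the paper provides. One small point you should make explicit to keep the coupling honest: the simple approximant must be chosen as a Borel function of $V$, say $V'=\phi(V)$ with $\phi$ mapping to the nearest of finitely many points of a countable dense subset of $\mathcal{B}$ (ties broken measurably), so that setting $V_i':=\phi(V_i)$ and $W_i:=V_i-V_i'$ really does produce i.i.d.\ copies satisfying $S_n=S_n'+\sum_{i=1}^n W_i$; as stated, "let $V_i',W_i$ be the corresponding i.i.d.\ copies" quietly assumes this. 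With that clarification the proof is complete, and it also correctly isolates where separability and the Pettis bound $\|E(W)\|\le E\|W\|$ enter.
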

	We call a random variable $V$ with values in a Banach space \textit{Gaussian} if $f(V)$ is a real Gaussian random variable with mean zero for each $f\in\mathcal{B}^*$. The \textit{covariance structure} of a random variable with values in a Banach space is defined as the set of expectations $E[(f(V)-E(f(V)))(g(V)-E(g(V)))]$ for $f,g\in\mathcal{B}^*$ and it determines a Gaussian random variable completely. 
	\begin{theorem}(Central limit theorem)\\
		Let $\mathcal{B}$ be a Banach space that has type 2. Let further $E(V)=0$ and $E(\|V\|^2)<\infty$. Then, $\frac{1}{\sqrt{n}}S_n$ converges weakly to a Gaussian random variable with the same covariance structure as $V$.
	\end{theorem}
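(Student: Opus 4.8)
The plan is to reduce the statement to the classical finite-dimensional central limit theorem through a finite-dimensional approximation of $V$ that the type 2 hypothesis renders uniform in $n$. Throughout write $X_n := \tfrac{1}{\sqrt{n}}S_n = \tfrac{1}{\sqrt{n}}\sum_{i=1}^n V_i$, where $V_1,V_2,\dots$ are the independent copies of $V$. The one structural property I would use is the type 2 inequality in its working form: there is a constant $C$ such that $E\bigl\|\sum_i Y_i\bigr\|^2 \leq C\sum_i E\|Y_i\|^2$ for every finite family of independent, centered, square-integrable $\mathcal{B}$-valued variables $Y_i$ (a consequence of the Rademacher definition of type 2 via symmetrization). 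This is the sole ingredient distinguishing type 2 spaces, and it is exactly what controls the error terms below.

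First I would set up the approximation. Since $\mathcal{B}$ is separable and $E\|V\|^2<\infty$, I can choose Borel maps $g_m:\mathcal{B}\to\mathcal{B}$ with finite-dimensional range such that $E\|V-g_m(V)\|^2\to 0$. Putting $V_m := g_m(V)-E(g_m(V))$ yields centered approximants, $E(V_m)=0$, taking values in a finite-dimensional subspace $F_m\cong\R^{d_m}$; since $E(g_m(V))=E(g_m(V)-V)\to 0$ they still satisfy $E\|V-V_m\|^2\to 0$. Applying the same maps to each copy gives coupled i.i.d. copies $V_m^{(i)}$ of $V_m$ and $W_m^{(i)}:=V_i-V_m^{(i)}$ of $W_m:=V-V_m$. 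On the finite-dimensional space $F_m$ the classical ($d_m$-dimensional) central limit theorem applies, so $X_n^{(m)} := \tfrac{1}{\sqrt{n}}\sum_{i=1}^n V_m^{(i)}$ converges weakly, as $n\to\infty$, to a centered Gaussian $G_m$ on $F_m$ with the covariance of $V_m$.

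Next I would glue the pieces together. The type 2 inequality applied to the centered copies $W_m^{(1)},\dots,W_m^{(n)}$ gives
\begin{align*}
	E\left\| X_n - X_n^{(m)} \right\|^2 = E\left\| \tfrac{1}{\sqrt{n}}\sum_{i=1}^n W_m^{(i)} \right\|^2 \leq C\, E\|W_m\|^2,
\end{align*}
a bound \emph{uniform in $n$} that tends to $0$ as $m\to\infty$; by Chebyshev's inequality this yields $\lim_{m\to\infty}\limsup_{n\to\infty}P(\|X_n-X_n^{(m)}\|>\delta)=0$ for every $\delta>0$. The same hypothesis in its equivalent Gaussian form, namely $E\|G_Y\|^2\leq C\,E\|Y\|^2$ for the Gaussian $G_Y$ sharing the covariance of $Y$, shows first that $V$ is pre-Gaussian, so the candidate limit $G:=G_V$ exists, and second, building $G$ and $G_m$ from the joint covariance of $(V,V_m)$ in the (again type 2) product space, that $G-G_m$ is Gaussian with the covariance of $W_m$; hence $E\|G-G_m\|^2\leq C\,E\|W_m\|^2\to 0$ and in particular $G_m\to G$ weakly.

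Finally I would invoke the standard approximation (converging-together) lemma for weak convergence: if $X_n^{(m)}\to G_m$ weakly as $n\to\infty$ for each $m$, if $G_m\to G$ weakly as $m\to\infty$, and if $\lim_{m\to\infty}\limsup_{n\to\infty}P(\|X_n-X_n^{(m)}\|>\delta)=0$ for all $\delta>0$, then $X_n\to G$ weakly. The three displays established above are precisely these hypotheses, and since $G=G_V$ has the covariance structure of $V$ the theorem follows. The main obstacle is the passage from finitely many dimensions to all of $\mathcal{B}$: it is here that type 2 is indispensable, being the only hypothesis that bounds $E\|X_n-X_n^{(m)}\|^2$ uniformly in $n$; without it the finite-dimensional limits need not assemble into a genuine $\mathcal{B}$-valued Gaussian, and the central limit theorem can fail even for bounded variables.
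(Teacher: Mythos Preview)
The paper does not actually prove this theorem: it appears in Appendix~C as one of several results quoted without proof from the theory of probability in Banach spaces, with a reference to Ledoux--Talagrand. So there is no ``paper's own proof'' to compare against.

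That said, your proposal is a correct outline of the standard argument (essentially the one in Ledoux--Talagrand, Chapter~10). The three ingredients --- finite-dimensional approximation of $V$ in $L^2(\mathcal{B})$, the classical CLT on each approximant, and the type~2 inequality to make the remainder bound $E\|X_n-X_n^{(m)}\|^2\leq C\,E\|W_m\|^2$ uniform in $n$ --- are exactly right, and the converging-together lemma is the appropriate way to assemble them. One point worth tightening if you ever write this out in full: the assertion that $V$ is pre-Gaussian and that $E\|G-G_m\|^2\leq C\,E\|W_m\|^2$ is true but not quite immediate from the Rademacher-type definition; it relies on the equivalence (in the presence of finite second moments) between type~2 and the Gaussian summability condition, which itself uses the CLT in finite dimensions together with a Fatou/tightness argument. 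This is standard, but it is a genuine step rather than a tautology.
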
  
	Recall that for $2\leq p<\infty$, the $L^p$-spaces are of type 2.

\end{document}